\documentclass[12pt,a4paper]{amsart}
\usepackage[latin1]{inputenc}
\usepackage[margin = 26mm]{geometry}
\usepackage{amsmath, amssymb, amsthm}
\usepackage{amsfonts,dsfont}
\usepackage{amssymb, relsize}
\usepackage{graphicx,overpic}
\usepackage{xcolor}

\usepackage{mathpazo} 
\linespread{1.1}       
\usepackage[scaled]{helvet} %

\normalfont

\usepackage[T1]{fontenc}
\usepackage{subcaption}

\usepackage[colorlinks=true,linkcolor=red]{hyperref}

\newtheorem{theorem}{Theorem}[section]
\newtheorem{lemma}[theorem]{Lemma}
\newtheorem{proposition}[theorem]{Proposition}
\newtheorem{corollary}[theorem]{Corollary}

\newtheorem*{theorem*}{Theorem}


\newenvironment{thmrep}[1]
  {\innerthmrep}
  {\endinnerthmrep}


\usepackage{color}

\newcommand{\ts}{\textsuperscript}

\newcommand{\Hom}{\mathrm{Hom}}
\newcommand{\ind}{\mathds{1}}

\title{Statistics of finite degree covers of torus knot complements}
\author{Elizabeth Baker}
\address{T{\"u}bingen, Germany \\
\tt elizabeth.baker@gmx.net}
\author{Bram Petri}
\address{Institut de Math\'ematiques de Jussieu-Paris Rive Gauche \\
 Sorbonne Universit\'e \\
 Paris, France \\
\tt bram.petri@imj-prg.fr}

\date{\today}

\begin{document}

\begin{abstract}
In the first part of this paper, we determine the asymptotic subgroup growth of the fundamental group of a torus knot complement. In the second part, we use this to study random finite degree covers of torus knot complements. We determine their Benjamini--Schramm limit and the linear growth rate of the Betti numbers of these covers. All these results generalise to a larger class of lattices in $\mathrm{PSL}(2,\mathbb{R})\times \mathbb{R}$. As a by-product of our proofs, we obtain analogous limit theorems for high index random subgroups of non-uniform Fuchsian lattices with torsion.
\end{abstract}

\maketitle
	
\section{Introduction}

A classical theorem due to Hempel \cite{Hempel} states that the fundamental group of a tame $3$-manifold is residually finite. As such, it has many finite index subgroups, or equivalently, the manifold has lots of finite degree covers. 

In this paper we study the fundamental groups of torus knot complements and groups closely related to these. We ask two questions: How fast does the number of index $n$ subgroups grow as a function of $n$? And what are the properties of a random index $n$ subgroup and the corresponding degree $n$ cover?

We will study groups of the form
\[ \Gamma_{p_1,\ldots, p_m} = \langle x_1, \ldots x_m| \; x_1^{p_1} = x_2^{p_2} = \cdots = x_m^{p_m} \rangle. \]
When $\gcd(p,q)=1$ and $p,q\geq 2$ then $\Gamma_{p,q}$ is the fundamental group of a $(p,q)$-torus knot complement. More generally, $\Gamma_{p_1,\ldots,p_m}$ is a central extension of the form
\begin{equation}\label{eq_central_ext}
 1\longrightarrow \mathbb{Z}  \longrightarrow \Gamma_{p_1,\ldots,p_m} \stackrel{\Phi_{p_1,\ldots,p_m}}{\longrightarrow} C_{p_1} * \cdots * C_{p_m} \longrightarrow 1,
 \end{equation}
where  $C_p$ denotes the finite cyclic group of order $p$ and $\Phi_{p_1,\ldots,p_m}$ is the map that sends the generator $x_j$ to a generator of $C_{p_j}$. 

We will consider the case $\sum_{j=1}^m \frac{1}{p_j} < m-1$, which includes all torus knot groups. In this setting $\Gamma_{p_1,\ldots,p_m}$ appears as a non-uniform lattice in $\mathrm{PSL}(2,\mathbb{R})\times \mathbb{R}$  (see for instance \cite[Proposition 7.2]{Eckmann}). $\Phi_{p_1,\ldots,p_m}(\Gamma_{p_1,\ldots,p_m})$ is then the projection onto $\mathrm{PSL}(2,\mathbb{R})$ of $\Gamma_{p_1,\ldots,p_m}$ and is a Fuchsian group that acts on the hyperbolic plane $\mathbb{H}^2$. The orbifold Euler characteristic of $\Phi_{p_1,\ldots,p_m}(\Gamma_{p_1,\ldots,p_m}) \backslash \mathbb{H}^2$ is 
\[
-m+1+\sum_{i=1}^m\frac{1}{p_i},
\] 
a number that will appear repeatedly throughout this paper.

\subsection{Subgroup growth}
The first of our questions above asks for the \emph{subgroup growth} of these groups. That is, the study of the large $n$ behaviour of the quantity $a_n(\Gamma_{p_1,\ldots,p_m})$ -- the number of index $n$ subgroups of $\Gamma_{p_1,\ldots,p_m}$.

Because $\Gamma_{p_1,\ldots,p_m}$ is a central extension of a free product of finite groups by $\mathbb{Z}$, a standard argument which we provide in Section \ref{sec_subgroupgrowth}, combined with results due to M\"uller \cite{Muller_FreeProducts}, Volynets \cite{Volynets} and Wilf \cite{Wilf} on the subgroup growth on free products of finite groups proves the following:
\begin{theorem}\label{thm_main1}
	Let $p_1,\dots, p_m \in \mathbb{N}_{>1}$ such that $\sum_{j=1}^m \frac{1}{p_j} < m-1$. Then it holds that\footnote{Here and throughout the paper, the notation $f(n)\sim g(n)$ as $n\to \infty$ will indicate that $\lim_{n\to\infty} f(n)/g(n) \to 1$.}
	$$a_n(\Gamma_{p_1,\ldots, p_m}) \sim A_{p_1,\ldots,p_m} \cdot n^{-1/2}\cdot \exp\left( \sum_{i=1}^m\sum_{\substack{0 < j < p_i \\ \text{s.t. } j|p_i}} \frac{n^{j/p_i}}{j} \right) \cdot \left(\frac{n}{e}\right)^{n\cdot\left(m-1-\sum_{i=1}^m \frac{1}{p_i}\right)} 
$$
as $n\to\infty$, where
$$
A_{p_1.\ldots p_m} = \sqrt{2\pi} \exp\left( - \sum_{i:\;p_i \text{ even}} \frac{1}{2p_i}\right) \prod_{i=1}^m p_i^{-1/2}.
$$
\end{theorem}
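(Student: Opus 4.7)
The plan is to use the central extension \eqref{eq_central_ext} to reduce the count of index-$n$ subgroups of $\Gamma := \Gamma_{p_1,\ldots,p_m}$ to that of the free product $G := C_{p_1} * \cdots * C_{p_m}$, for which the stated asymptotic is the classical M\"uller--Volynets--Wilf theorem on subgroup growth of free products of finite cyclic groups.

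I would decompose $a_n(\Gamma) = \sum_{d \mid n} N_d(n)$, where $N_d(n)$ counts index-$n$ subgroups $H \le \Gamma$ with $[Z : H \cap Z] = d$ (here $Z \cong \mathbb{Z}$ is the kernel of $\Phi_{p_1,\ldots,p_m}$). The case $d = 1$ is the main term: $H \supseteq Z$ precisely when $d = 1$, and $H \mapsto H/Z$ is a bijection with index-$n$ subgroups of $G$, so $N_1(n) = a_n(G)$ exactly. For $d \ge 2$, the subgroups $H$ with $\Phi_{p_1,\ldots,p_m}(H) = K$ (where $K \le G$ has index $n/d$) and $[Z : H \cap Z] = d$ correspond bijectively to complements of $\mathbb{Z}/d$ in the induced central extension $1 \to \mathbb{Z}/d \to \Phi_{p_1,\ldots,p_m}^{-1}(K)/dZ \to K \to 1$. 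Writing $K \cong F_r * C_{q_1} * \cdots * C_{q_s}$ by Kurosh's theorem (each $q_j \mid p_i$ for some $i$) and computing the class in $H^2(K, \mathbb{Z}/d)$, such complements exist iff $\gcd(d, q_j) = 1$ for all $j$; when they do, their number equals $|\Hom(K^{\mathrm{ab}}, \mathbb{Z}/d)| = d^r$.

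The key analytic step is to show $N_d(n) = o(a_n(G))$ for $d \ge 2$, uniformly enough to sum over $d \mid n$. The Euler-characteristic identity $\chi(K) = (n/d)\,\chi(G)$ rewrites as $r = 1 - s + \sum_j 1/q_j + (n/d)(m - 1 - \sum 1/p_i)$, which combined with $\sum_j 1/q_j \le s/2$ (since $q_j \ge 2$) yields the uniform bound $r \le 1 + (n/d)(m - 1 - \sum 1/p_i)$. Hence $N_d(n) \le d \cdot d^{(n/d)(m-1-\sum 1/p_i)} \cdot a_{n/d}(G)$, and inserting the M\"uller--Volynets--Wilf asymptotic $a_{n/d}(G) \sim C \cdot ((n/d)/e)^{(n/d)(m-1-\sum 1/p_i)} \cdot (\text{subexp})$ gives $N_d(n) \lesssim d \cdot (n/e)^{(n/d)(m-1-\sum 1/p_i)} \cdot (\text{subexp})$. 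Under the standing hypothesis $m - 1 - \sum 1/p_i > 0$, this is super-exponentially smaller than $a_n(G) \sim (n/e)^{n(m-1-\sum 1/p_i)} \cdot (\text{subexp})$ whenever $d \ge 2$, and summing over the at most $n$ divisors of $n$ preserves the bound. Therefore $a_n(\Gamma) \sim a_n(G)$.

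The theorem then follows by invoking the M\"uller--Volynets--Wilf asymptotic for $a_n(G)$, derived by inverting the exponential generating-function identity $\sum_n h_n(G) z^n / n! = \exp(\sum_n t_n(G) z^n / n!)$ with $h_n(G) = \prod_i N_{p_i}(n)$, via Moser--Wyman saddle-point asymptotics for $N_p(n) := |\{\sigma \in S_n : \sigma^p = 1\}|$ followed by a second saddle-point extraction of $t_n(G) = (n-1)! \, a_n(G)$. I expect the main obstacle to be the cohomological step: verifying that under the Kurosh decomposition the class of the induced extension in $H^2(K, \mathbb{Z}/d) \cong \bigoplus_j \mathbb{Z}/\gcd(d, q_j)$ equals $(1, \ldots, 1)$, which amounts to tracking the class $(1, \ldots, 1) \in \bigoplus_i \mathbb{Z}/p_i = H^2(G, \mathbb{Z})$ of the original extension \eqref{eq_central_ext} through both the restriction-to-$K$ map and the mod-$d$ reduction.
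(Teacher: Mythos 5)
Your proposal is correct and arrives at the paper's key intermediate reduction $a_n(\Gamma_{p_1,\ldots,p_m})\sim a_n(C_{p_1}*\cdots*C_{p_m})$ (Theorem~\ref{thm_subgroup_growth} there), but by a genuinely different route to the bound on the ``larger-divisor'' terms. The paper cites \cite[Proposition 1.3.2]{LubotzkySegal} (Proposition~\ref{prop_subgp_quotient}) to get $a_n(\Gamma)\leq\sum_{t|n}a_{n/t}(C_{p_1}*\cdots*C_{p_m})\,t^{m}$ as a black box and then pigeon-holes over the at most $n$ divisors. You instead partition $a_n(\Gamma)=\sum_{d|n}N_d(n)$ by $d=[Z:H\cap Z]$, observe $N_1(n)=a_n(G)$ exactly, and for $d\geq 2$ reconstruct the relevant bound by hand: you identify each $H$ in the fibre over $K=\Phi(H)$ with a complement of $\mathbb{Z}/d$ in the pushed-forward central extension, apply Kurosh, bound the free rank $r$ via the orbifold Euler characteristic identity $\chi(K)=[G:K]\chi(G)$, and compute $H^2(K,\mathbb{Z}/d)\cong\bigoplus_j\mathbb{Z}/\gcd(d,q_j)$ with restricted class $(1,\ldots,1)$, so complements exist iff $\gcd(d,q_j)=1$ for all $j$ and then number $|\Hom(K^{\mathrm{ab}},\mathbb{Z}/d)|=d^r$. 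All of this is correct, and your per-divisor bound $N_d(n)\leq d^{1+(n/d)\chi'}a_{n/d}(G)$ with $\chi'=m-1-\sum 1/p_i>0$ is indeed swallowed by the gap between $(n/e)^{(n/d)\chi'}$ and $(n/e)^{n\chi'}$ when $d\geq 2$, even after summing over divisors. Two remarks: first, the cohomological step you flag as the obstacle is not actually load-bearing for the asymptotic --- the cruder bound $|\Hom(K^{\mathrm{ab}},\mathbb{Z}/d)|\leq d^{r+s}$ together with $r+s=O(n/d)$ (also from the Euler characteristic) already suffices, so you could sidestep the extension-class computation entirely. Second, what your route buys is a self-contained, structurally explicit derivation (exact fibre sizes and an existence criterion) at the cost of a per-divisor bound that grows like $d^{O(n/d)}$ rather than the paper's $t^{m}$; the paper's route is shorter because it treats the subgroup-growth-of-extensions inequality as a citation. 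Both proofs then finish identically by writing $a_n(G)=t_n(G)/(n-1)!$, invoking $t_n(G)\sim h_n(G)=\prod_i h_n(C_{p_i})$ (M\"uller's Theorem~\ref{thm_muller-transitive}), and inserting the Volynets--Wilf asymptotic for $h_n(C_{p_i})$ (Theorem~\ref{hn_cyclic}).
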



The theorem above also generalizes to free products of the form 
\[\Gamma_{p_{1,1},\ldots, p_{1,m_1}} * \cdots * \Gamma_{p_{r,1}\ldots,p_{r,m_r}}\]
where $\sum_j p_{i,m_i} < m_i -1$ for all $i=1,\ldots, r$. In topological terms, this corresponds to taking connected sums. 

Using the asymptotic behaviour of $a_n(\Gamma_{p_1,\ldots,p_m})$, one can also derive an asymptotic equivalent for the related sequence
\[ h_n(\Gamma_{p_1,\ldots,p_m}) = \left| \Hom(\Gamma_{p_1,\ldots,p_m},S_n) \right|. \]
First of all, note that \eqref{eq_central_ext} implies that
\begin{equation}\label{eq_hn_projection}
 h_n(\Gamma_{p_1,\ldots,p_m}) \quad \geq \quad  h_n(C_{p_1}*\cdots * C_{p_m}) = \prod_{j=1}^m h_n(C_{p_j},S_n). 
 \end{equation}
It turns out that asymptotically the bound in \eqref{eq_hn_projection} is tight. In other words, a typical homomorphism factors through $\Phi_{p_1,\ldots,p_m}$. We prove:
\begin{theorem}\label{thm_main2}
$p_1,\dots, p_m \in \mathbb{N}_{>1}$ such that $\sum_{j=1}^m \frac{1}{p_j} < m-1$. Then
\[ \frac{\left|\left\{\rho \in \Hom(\Gamma_{p_1,\ldots,p_m},S_n);\; \rho \text{ factors through }\Phi_{p_1,\ldots,p_m} \right\}\right|}{h_n(\Gamma_{p_1,\ldots,p_m})} \longrightarrow 1  \]
as $n\to\infty$.
\end{theorem}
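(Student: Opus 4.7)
The plan is to stratify $\Hom(\Gamma_{p_1,\ldots,p_m}, S_n)$ by the image of the central element $z := x_1^{p_1} = \cdots = x_m^{p_m}$, which generates the kernel $\mathbb{Z}$ in \eqref{eq_central_ext}. A homomorphism $\rho$ corresponds to an $m$-tuple $(\sigma_1,\ldots,\sigma_m) \in S_n^m$ with $\sigma_1^{p_1} = \cdots = \sigma_m^{p_m} =: \tau$, and $\rho$ factors through $\Phi_{p_1,\ldots,p_m}$ precisely when $\tau = 1$. Setting $N_p(\tau) := |\{\sigma \in S_n : \sigma^p = \tau\}|$ (which depends only on the cycle type of $\tau$) and grouping by cycle type,
\[
h_n(\Gamma_{p_1,\ldots,p_m}) = \sum_{\mu \vdash n} |C_\mu| \prod_{i=1}^m N_{p_i}(\mu),
\]
where the $\mu = (1^n)$ term equals $\prod_i h_n(C_{p_i},S_n)$, exactly the number of factoring homomorphisms (by the free-product identity $\Hom(C_{p_1}*\cdots*C_{p_m}, S_n) = \prod_i \Hom(C_{p_i}, S_n)$). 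The theorem therefore reduces to showing $\sum_{\mu \neq (1^n)} |C_\mu| \prod_i N_{p_i}(\mu) = o\bigl(\prod_i h_n(C_{p_i},S_n)\bigr)$.

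To set the scale, I would record the Moser--Wyman asymptotic for $h_n(C_p,S_n)$ (an ingredient already used for Theorem~\ref{thm_main1}), yielding
\[
\prod_{i=1}^m h_n(C_{p_i}, S_n) \sim C_0 \cdot n^{-m/2} \exp\Bigl(\sum_{i=1}^m \sum_{\substack{0<j<p_i\\ j\mid p_i}} \frac{n^{j/p_i}}{j}\Bigr) \cdot (n!)^{m - \sum_i 1/p_i}.
\]
Then I would use the standard $p$-th root cycle-surgery description of $N_p(\mu)$: a $\sigma$ with $\sigma^p$ of cycle type $\mu$ is obtained, for each cycle length $\ell$, by grouping the $\ell$-cycles of $\mu$ into blocks of a common size $d \mid p$ satisfying $\gcd(\ell d, p) = d$, and fusing each block into a single cycle of $\sigma$ of length $\ell d$. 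Combined with the conjugacy class size $|C_\mu|$, this lets one form the ratio $R(\mu) := |C_\mu| \prod_i N_{p_i}(\mu) / \prod_i h_n(C_{p_i}, S_n)$, and the goal becomes $\sum_{\mu \neq (1^n)} R(\mu) \to 0$.

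The hardest step is this tail estimate, since there are $\exp(\Theta(\sqrt{n}))$ partitions of $n$. My plan is to stratify by the support size $s := n - \#\{\text{fixed points of }\mu\}$ and to show that each unit of $s$ costs at least a polynomial factor in $R(\mu)$: moving a point out of $\mathrm{Fix}(\tau)$ forces every $\sigma_i$ to spend cycle mass on non-trivial orbits of $\tau$, which reduces the factorial gain in the denominator. The hyperbolicity hypothesis $\sum_i 1/p_i < m-1$ is essential here, as it makes the benchmark grow like $(n!)^{m - \sum_i 1/p_i}$ with a strictly positive excess over the total permutation mass $n!$, leaving enough slack for the per-cycle savings to win over the $\mu$-entropy. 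A potentially cleaner route is via the transitive/exponential-formula identity $\sum_{n} h_n(G)/n!\, x^n = \exp\bigl(\sum_n a_n(G)/n \cdot x^n\bigr)$: since Theorem~\ref{thm_main1} pins down the precise leading asymptotic for $a_n(\Gamma_{p_1,\ldots,p_m})$ and the known asymptotic for $a_n(C_{p_1}*\cdots*C_{p_m})$ (due to M\"uller--Volynets--Wilf) has the same form, a saddle-point comparison of the two generating functions should directly give $h_n(\Gamma_{p_1,\ldots,p_m}) \sim h_n(C_{p_1}*\cdots*C_{p_m}) = \prod_i h_n(C_{p_i}, S_n)$, which is precisely the content of the theorem.
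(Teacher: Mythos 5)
Your combinatorial set-up is correct and matches the paper's Appendix~\ref{app_elt_approach} exactly: writing $h_n(\Gamma_{p_1,\ldots,p_m})=\sum_{\mu\vdash n}|C_\mu|\prod_i N_{p_i}(\mu)$ and observing that the $\mu=(1^n)$ term is $\prod_i h_n(C_{p_i})$, the number of factoring homomorphisms, is Proposition~\ref{hn_closed_formula} in disguise, and the reduction to $\sum_{\mu\neq(1^n)}|C_\mu|\prod_i N_{p_i}(\mu)=o(\prod_i h_n(C_{p_i}))$ is the right target. But that tail bound is where the entire difficulty of this route lives, and your first plan only gestures at it: ``each unit of support costs at least a polynomial factor'' is an intuition, not an estimate, and as you yourself note one has to beat the $\exp(\Theta(\sqrt n))$ entropy of partitions. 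In Appendix~\ref{app_elt_approach} the paper makes this work via a concrete uniform bound on the root-counting quantities (Lemma~\ref{tau_bounds}) \emph{and} still has to invoke the Volynets--Wilf/M\"uller coefficient asymptotics; without an estimate of that strength your sketch does not yet close.

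Your second plan is closer to the paper's \emph{main} proof, but the phrase ``a saddle-point comparison $\ldots$ should directly give'' carries the whole burden and is unjustified as written; also, using the full Theorem~\ref{thm_main1} asymptotic is more than you need and risks circularity in exposition. The paper avoids the partition sum entirely: (i) Proposition~\ref{prop_subgp_quotient} (a crude upper bound on $a_n$ of an extension) gives $a_n(\Gamma_{p_1,\ldots,p_m})\sim a_n(C_{p_1}*\cdots*C_{p_m})$ directly (Theorem~\ref{thm_subgroup_growth}); (ii) the recurrence $h_n(\Gamma)/(n-1)!=a_n(\Gamma)+\sum_{k<n}\frac{h_{n-k}(\Gamma)}{(n-k)!}a_k(\Gamma)$ together with the rapid-growth estimate of Lemma~\ref{lem_rapidgrowth} yields $h_n(\Gamma)\sim t_n(\Gamma)$ by induction (Corollary~\ref{cor_hn_tn}); and (iii) M\"uller's Theorem~\ref{thm_muller-transitive} gives $t_n(C_{p_1}*\cdots*C_{p_m})\sim h_n(C_{p_1}*\cdots*C_{p_m})$. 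Chaining these yields $h_n(\Gamma_{p_1,\ldots,p_m})\sim\prod_i h_n(C_{p_i})$, which is Theorem~\ref{thm_main2}. Replacing your ``saddle-point'' step with this recurrence-plus-rapid-growth argument is exactly what is missing to turn the second plan into a proof.
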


The analogous result is also known to hold for orientable circle bundles over surfaces \cite{LisMed}. A similar observation was also made for certain sequences of amalgamated products in \cite[p. 130]{Muller_FreeProducts}.

As we noted above, we use results due to M\"uller, Volynets and Wilf in order to prove the two results above. The fact that $\Gamma_{p_1,\ldots,p_m}$ has a relatively simple presentation also allows one to derive the theorems above with elementary arguments. In particular, one can write down a closed formula for $h_n(\Gamma_{p_1,\ldots,p_m})$ and derive the theorems from there, using the same type of techniques that M\"uller employed in \cite{Muller_FreeProducts}. We will sketch these arguments, which we employed in a previous version of this paper, in Appendix \ref{app_elt_approach}.

\subsection{Random subgroups and covers}

In the second part of our paper, we use the results above to study random finite index subgroups of $\Gamma_{p_1,\ldots,p_m}$. That is, since the number of index $n$ subgroups of $\Gamma_{p_1,\ldots,p_m}$ is finite, we can pick one uniformly at random and ask for its properties. 

Let us denote our random index $n$ subgroup by $H_n$. This is an example of an Invariant Random Subgroup (IRS) -- i.e. a conjugation invariant Borel measure on the Chabauty space of subgroups of $\Gamma_{p_1,\ldots,p_m}$ (for more details see Section \ref{sec_IRS}).

We will also fix a classifying space $X_{p_1,\ldots,p_m}$  for $\Gamma_{p_1,\ldots,p_m}$. For instance, if $p,q\geq 2 $ and $\gcd(p,q)=1$ we can take the corresponding torus knot complement. More generally, since $\Gamma_{p_1,\ldots,p_m}$ appears as a torsion-free lattice in $\mathrm{PSL}(2,\mathbb{R})\times \mathbb{R}$, we may take the manifold $\Gamma \backslash (\mathbb{H}^2\times \mathbb{R})$. $H_n$ gives rise to a random degree $n$ cover of $X_{p_1,\ldots,p_m}$.

We will study three (related) problems:
\begin{itemize}
\item First, we will ask, given a conjugacy class $K\subset \Gamma_{p_1,\ldots,p_m}$, how many conjugacy classes of $H_n$ the set $K\cap H_n$ contains. We will denote this number by $Z_K(H_n)$. In topological terms, $K$ corresponds to a free homotopy class of loops in $X_{p_1,\ldots,p_m}$. $Z_K(H_n)$ is the number of closed lifts of that loop to the cover of $X_{p_1,\ldots,p_m}$ corresponding to $H_n$. We note that we count these lifts as loops and not as sets. In particular, if the corresponding element in $\Gamma_{p_1,\ldots,p_m}$ is non-primitive\footnote{In this article, \emph{primitive} means ``not a non-trivial power''.}, some of these different lifts may overlap.
\item After this we will ask what IRS the random subgroup $H_n$ converges to as $n\to\infty$. In topological terms, this asks for the \emph{Benjamini--Schramm} limit of the corresponding random cover of $X_{p_1,\ldots,p_m}$ (see Section \ref{sec_BS_convergence} for a definition of Benjamini--Schramm convergence).
\item Finally, we will study the asymptotic behaviour of the real Betti numbers $b_k(H_n;\mathbb{R})$ of $H_n$, or equivalently of the corresponding random cover of  $X_{p_1,\ldots,p_m}$.
\end{itemize}

We will write
\[ L_{p_1,\ldots,p_m} := \ker(\Phi_{p_1,\ldots,p_m}) \simeq \mathbb{Z}. \]
For a torus knot this is the subgroup generated by the longitude. Since $L_{p_1,\ldots,p_m}$ is normal in $\Gamma_{p_1,\ldots,p_m}$, it's also an IRS. 

In the theorem below, $\mathcal{N}(0,1)$ will denote a standard normal distribution on $\mathbb{R}$ and more generally, $\mathcal{N}^{\otimes r}$ a standard normal distribution on $\mathbb{R}^r$. If $K_1,K_2\subset \Gamma$ are conjugacy classes in a group $\Gamma$ and $g\in \Gamma$ and $k_1,k_2 \in \mathbb{N}$ are such that $g^{k_1} \in K_1$ and $g^{k_2} \in K_2$ we will say $K_1$ and $K_2$ have a common root. Note that two conjugacy classes that don't have a common root are in particular distinct.

\pagebreak
We will prove:
\begin{theorem}\label{thm_main3} Let $p_1,\ldots,p_m \in\mathbb{N}_{>1}$ be such that $\sum_{j=1}^m \frac{1}{p_j}<m-1$.
\begin{itemize}
\item[(a)] Let $K_1,\ldots,K_r\subset \Gamma_{p_1,\ldots,p_m}$ be non-trivial conjugacy classes of which no pair have a common root.
\begin{enumerate}
\item If for all $g\in K_i$, for all $i=1,\ldots,r$, the image $\Phi_{p_1,\ldots,p_m}(g)$ is either trivial or of infinite order, then, as $n\to\infty$, the random variables $Z_{K_i}(H_n)$, $i=1,\ldots,r$ are asymptotically independent. Moreover,
\begin{itemize}
\item if $K_i \subset L_{p_1,\ldots,p_m}$ then
$$
\lim_{n\to\infty} \mathbb{P}[Z_{K_i}(H_n) = n] =1
$$
\item and if $K_i \not\subset L_{p_1,\ldots,p_m}$ is the conjugacy class of the $k^{th}$ power of a primitive element then $Z_{K_i}(H_n)$ converges in distribution to a random variable
$$
Z_{K_i}^\infty \sim \sum_{d|k}\; d \cdot X_{1/d},
$$
where $X_{1/d} \sim \mathrm{Poisson}(1/d)$ and $X_1,\ldots,X_{1/k}$ are independent.
\end{itemize}
\item If the images of the elements of $K_i$ under $\Phi_{p_1,\ldots,p_m}$ have order $k_i\in\mathbb{N}$ for $i=1,\ldots,r$, then the vector of random variables
$$
\left(\frac{Z_{K_1}(H_n)-n^{1/k_1}}{\sqrt{l_1}\cdot n^{1/2k_1}},\ldots, \frac{Z_{K_r}(H_n)-n^{1/k_r}}{\sqrt{l_r}\cdot n^{1/2k_r}} \right)
$$
converges in distribution to a $\mathcal{N}(0,1)^{\otimes r}$-distributed random variable as $n\to \infty$. Here $l_i\in\mathbb{N}$ is such that $\Phi_{p_1,\ldots,p_m}(K_i)$ is the conjugacy class of $x_{j_i}^{l_i}$, for $i=1,\ldots,r$.
\end{enumerate}
\item[(b)] As $n\to\infty$, $H_n$ converges to $L_{p_1,\ldots,p_m}$ as an IRS.
\item[(c)] We have that 
$$
\lim_{n\to\infty} \frac{b_k(H_n;\mathbb{R})}{n} =  \left\{
\begin{array}{ll}
m-1 - \sum\limits_{i=1}^m \frac{1}{p_i} & \text{if } k = 1,2 \\
0 & \text{otherwise}.
\end{array}
\right.
$$
in probability.
\end{itemize}
\end{theorem}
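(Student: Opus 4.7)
The plan is to realize $H_n$ as the point stabilizer of $1$ under a uniform random transitive homomorphism $\rho : \Gamma_{p_1,\ldots,p_m} \to S_n$ and then exploit Theorem~\ref{thm_main2}: with probability tending to $1$, $\rho$ factors through $\Phi_{p_1,\ldots,p_m}$, so $L_{p_1,\ldots,p_m} \subset H_n$ on this good event, and $\bar H_n := \Phi_{p_1,\ldots,p_m}(H_n)$ is a uniformly random index-$n$ subgroup of $C_{p_1}*\cdots*C_{p_m}$; concretely, the induced images $\tilde\rho(\bar x_i)$ become independent and uniform on $\{\sigma \in S_n : \sigma^{p_i} = 1\}$. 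The operational identity $Z_K(H_n) = |\mathrm{Fix}(\rho(g))|$ for any $g \in K$ is the bridge to permutation statistics throughout.

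For part~(a) I split by the order of $\Phi_{p_1,\ldots,p_m}(g)$. If $g \in L_{p_1,\ldots,p_m}$ then $\rho(g) = \mathrm{id}$ on the good event, giving $Z_{K_i}(H_n) = n$ with probability tending to $1$. If $\Phi_{p_1,\ldots,p_m}(g)$ has finite order $k_i$, conjugate to $x_{j_i}^{l_i}$, then $Z_{K_i}(H_n)$ is, up to conjugation in $S_n$, $|\mathrm{Fix}(\sigma^{l_i})|$ for $\sigma$ uniform with $\sigma^{p_{j_i}}=1$, which decomposes as $\sum_{e \mid \gcd(l_i,p_{j_i})} e\,c_e(\sigma)$; saddle-point asymptotics for the generating function $\exp(\sum_{d\mid p_{j_i}} x^d/d)$, in the style of M\"uller, Wilf and Volynets, show that each $c_e$ with $e < p_{j_i}$ is asymptotically Poisson with parameter $x_n^e/e$ where $x_n \sim n^{1/p_{j_i}}$, and summing produces the stated Gaussian CLT, the dominant contribution coming from cycles of length $\gcd(l_i,p_{j_i}) = p_{j_i}/k_i$. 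If $\Phi_{p_1,\ldots,p_m}(g)$ has infinite order, write it as $u^k$ with $u$ cyclically reduced and primitive in $C_{p_1}*\cdots*C_{p_m}$; adapting Nica's moment method from uniform random homomorphisms of free groups to free products of finite cyclic groups, the factorial moments of $|\mathrm{Fix}(\tilde\rho(u))|$ reduce to a tuple count converging to $1$, so $|\mathrm{Fix}(\tilde\rho(u))| \to \mathrm{Poisson}(1)$ and $|\mathrm{Fix}(\tilde\rho(u^k))| \to \sum_{d\mid k} d\,X_{1/d}$ with $X_{1/d} \sim \mathrm{Poisson}(1/d)$ independent. Joint asymptotic independence of the $Z_{K_i}(H_n)$ stems from the same moment computation: the no-common-root hypothesis forces the relevant cycle statistics (finite-order case) or word-solution counts (infinite-order case) to decouple, so their joint factorial moments factorize in the limit. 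Part~(b) then follows formally: Chabauty IRS convergence $H_n \to L_{p_1,\ldots,p_m}$ is equivalent to $\mathbb{E}[|\mathrm{Fix}(\rho(\gamma))|]/n \to \ind[\gamma \in L_{p_1,\ldots,p_m}]$ for every $\gamma \in \Gamma_{p_1,\ldots,p_m}$, and part~(a) delivers exactly this (either $1$, or of order $n^{1/k}/n$ with $k \ge 2$, or $O(1/n)$).

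For part~(c), $X_{H_n} = H_n \backslash (\mathbb{H}^2 \times \mathbb{R})$ is an aspherical open connected $3$-manifold, so $b_k(H_n;\mathbb{R}) = 0$ for $k \ge 3$; since the central $\mathbb{Z}$ forces $\chi(\Gamma_{p_1,\ldots,p_m}) = 0$ and hence $\chi(H_n) = 0$, one has $b_2 = b_1 - 1$ and everything reduces to $b_1$. Applying the five-term exact sequence with $\mathbb{R}$ coefficients to the central extension $1 \to L_{p_1,\ldots,p_m} \to H_n \to \bar H_n \to 1$, together with $H_2(\bar H_n;\mathbb{R}) = 0$ (since $\bar H_n$ is virtually free), gives $b_1(H_n) = b_1(\bar H_n) + 1$. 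The Kurosh subgroup theorem then writes $\bar H_n \cong F_r * C_{d_1} * \cdots * C_{d_s}$, and Bass--Serre theory identifies each finite Kurosh factor with a cycle of one of the $\tilde\rho(\bar x_i)$: a length-$e$ cycle of $\tilde\rho(\bar x_i)$ contributes a factor $C_{p_i/e}$. Combining multiplicativity of the rational Euler characteristic, $\chi(\bar H_n) = n(1 - m + \sum 1/p_i)$, with the graph-of-groups formula $\chi(\bar H_n) = 1 - r + \sum_\alpha 1/d_\alpha - s$ and the identity $\sum_\alpha 1/d_\alpha = \sum_{i=1}^m n/p_i$, one obtains
\[
b_1(\bar H_n) = r = 1 + n(m-1) - \sum_{i=1}^m c^{(i)},
\]
where $c^{(i)}$ is the total number of cycles of $\tilde\rho(\bar x_i)$. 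Concentration $c^{(i)}/n \to 1/p_i$ in probability follows since $\sum_e e\,c_e^{(i)} = n$ and, for each $e < p_i$, $c_e^{(i)}$ has mean (and variance) of order $n^{e/p_i} = o(n)$ by the cycle-count asymptotics already invoked in part~(a); Chebyshev's inequality then gives the claim, and dividing by $n$ yields the stated limits for $b_1$ and $b_2$.

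The main obstacle, I expect, is the moment analysis for part~(a), in particular the infinite-order/free-product case, where one must adapt Nica's combinatorial setup from free groups to free products of finite cyclic groups and establish the compound Poisson limit together with cross-class asymptotic independence under the no-common-root hypothesis. Given (a), the arguments for (b) and (c) reduce to essentially routine bookkeeping: the Chabauty characterization, and the five-term/Kurosh/Chebyshev package.
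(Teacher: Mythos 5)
Your plan for parts (a) and (b) follows the paper's route: reduce via Theorem~\ref{thm_main2} to the quotient $C_{p_1}*\cdots*C_{p_m}$, identify $Z_K$ with fixed-point counts, prove Poisson/Gaussian limits by the moment method (Benaych-Georges/Nica style for infinite order, M\"uller--Schlage-Puchta for finite order), and use a Chabauty/Portmanteau-type characterization of IRS convergence in terms of $\mathbb{E}[Z_K]/n$. One imprecision to flag: the images $\tilde\rho(\bar x_i)$ are \emph{not} independent and uniform on $\{\sigma:\sigma^{p_i}=1\}$ when $\rho$ is conditioned to be transitive; the transfer from uniform homomorphisms to uniform \emph{transitive} homomorphisms requires M\"uller's $t_n(\Lambda)\sim h_n(\Lambda)$ (Theorem~\ref{thm_muller-transitive}), which the paper invokes explicitly and which your argument needs at the same step. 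Similarly, Poisson with parameter $\Theta(n^{e/p_i})$ is not ``asymptotically Poisson'' but rather asymptotically Gaussian after centering and scaling; the conclusion you draw is correct, but the intermediate phrasing conflates the two regimes. None of this changes the substance.

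For part (c) your proposal takes a genuinely different and more elementary route than the paper. The paper appeals to soft general machinery — Benjamini--Schramm convergence implies convergence of normalized Betti numbers (Elek, Lemma~\ref{lem_betticonvergence}) plus L\"uck approximation, and then computes $\ell^2$-Betti numbers via the K\"unneth formula (Lemma~\ref{lem_l2_betti}). You instead compute $b_1(H_n)$ almost exactly on the good event: a five-term exact sequence for the central extension $1\to\mathbb{Z}\to H_n\to\bar H_n\to 1$ (using $H_2(\bar H_n;\mathbb{R})=0$ for virtually free groups) gives $b_1(H_n)=b_1(\bar H_n)+1$; Kurosh plus Bass--Serre identify the non-free factors of $\bar H_n$ with cycles of the $\tilde\rho(\bar x_i)$; and Euler characteristic bookkeeping yields $r=1+n(m-1)-\sum_i c^{(i)}$, after which concentration of the cycle counts (Markov/Chebyshev via the first-moment estimates from part~(a)) finishes the job. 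The expressions $b_3=0$, $\chi(H_n)=0$ then give $b_2=b_1-1$ and the remaining cases. This buys explicitness and avoids both the approximation theorems and $\ell^2$-invariants, at the cost of being tied to the very specific structure of $\Gamma_{p_1,\ldots,p_m}$ (the paper's Lemma~\ref{lem_betticonvergence} is a general statement). One small bookkeeping remark: your identity $\sum_\alpha 1/d_\alpha=\sum_i n/p_i$ holds only if trivial factors $C_1$ (full-length cycles) are included in the Kurosh enumeration and $s$ counts all cycles; under that convention the final formula $r=1+n(m-1)-\sum_i c^{(i)}$ is correct, and I checked it also comes out the same under the standard convention where trivial factors are omitted. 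As in parts (a)--(b), the first-moment estimates for $c_e^{(i)}$ on transitive homomorphisms again rely implicitly on $t_n\sim h_n$, which you should cite.
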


Recall that a random variable $X:\Omega\to\mathbb{N}$ is Poisson-distributed with parameter $\lambda>0$ if and only if
$$
\mathbb{P}[X=k] = \frac{\lambda^k e^{-\lambda}}{k!} \quad \forall k \in \mathbb{N}.
$$
So (a) above gives us an explicit limit for the probability that a fixed curve lifts to any given number of curves in the cover. For example, if we denote the random degree $n$ cover of our $(p,q)$-torus knot complement by $X_{p,q}(n)$ and $\gamma$ is any primitive free homotopy class of closed curves in $X_{p,q}(1)$ that is not freely homotopic to the longitude we obtain:
$$
\lim_{n\to\infty}\mathbb{P}[ \gamma \text{ lifts to exactly }3\text{ closed curves in }X_{p,q}(n) ] = \frac{1}{6e} = 0.0613\ldots .
$$
Using our techniques, the limit of the joint distribution of the variables associated to distinct conjugacy classes that do have common roots can also be determined. The independence will then however be lost (see Theorem \ref{thm_factorialmomentsY} below).

(b) in particular implies that a random degree $n$ cover of a torus knot complement does \emph{not} converge to the universal cover of the given torus knot complement as $n\to\infty$. This is different from the behaviour of random finite covers of graphs \cite{DJPP}, surfaces \cite{MageePuder} and many large volume locally symmetric spaces of higher rank \cite{7sam}, that all do converge to their universal covers.

(c) also has implications for the number of boundary tori in a random cover of a torus knot complement. Indeed, together with ``half lives, half dies'' \cite[Lemma 3.5]{Hatcher_3D}, it also implies that the number of boundary components of a degree $n$ cover is typically at most $\Big(1 - \frac{1}{p}-\frac{1}{q}\Big)\cdot n + o(n)$. 

Because all the results in the theorem above are really about the group $\Gamma_{p_1,\ldots,p_m}$, we can also apply them to random covers of more general spaces $Y_{p_1,\ldots,p_m}$ that have $\Gamma_{p_1.\ldots,p_m}$ as their fundamental group (i.e. without assuming that $Y_{p_1,\ldots,p_m}$ is a classifying space for $\Gamma_{p_1,\ldots,p_m}$). In that case, the random cover Benjamini--Schramm converges to the cover of $Y_{p_1,\ldots,p_m}$ corresponding to $L_{p_1,\ldots,p_m}$ and the normalised Betti numbers converge to the $\ell^2$-Betti numbers of that cover.

Finally, we note that we prove analogous results to Theorem \ref{thm_main3} for random index $n$ subgroups of non-cocompact Fuchsian groups.

\begin{theorem}\label{thm_main4}
Let $\Lambda$ be a non-cocompact Fuchsian group of finite covolume.
Moreover, let $G_n<\Lambda$ denote an index $n$ subgroup, chosen uniformly at random. 
\begin{itemize}
\item[(a)] 
\begin{enumerate}
\item If $K_1,\ldots, K_r \subset \Lambda$ are conjugacy classes of infinite order elements of which no pair have a common root. Then, as $n\to\infty$, the vector of random variables
$$
(Z_{K_1}(G_n),\ldots,Z_{K_r}(G_n))
$$
converges jointly in distribution to a vector 
$$
(Z_{K_1}^\infty,\ldots,Z_{K_r}^\infty):\Omega \to \mathbb{N}^r
$$
of independent random variables, such that if $K_i$ is the conjugacy class of a $k^{th}$ power of a primitive element then
$$
Z_{K_i}^\infty \sim \sum_{d|k}\; d \cdot X_{1/d},
$$
where $X_{1/d} \sim \mathrm{Poisson}(1/d)$ and $X_1,\ldots,X_{1/k}$ are independent.
\item \cite[Lemma 4]{MullerPuchta3} If $K_1,\ldots,K_r\subset \Lambda$ are non-trivial conjugacy classes of which no pair have a common root, whose elements have orders $k_1,\ldots,k_r\in\mathbb{N}$ respectively, then the vector of random variables
$$
\left(\frac{Z_{K_1}(G_n)-n^{1/k_1}}{\sqrt{l_1}\cdot n^{1/2k_1}},\ldots, \frac{Z_{K_r}(G_n)-n^{1/k_r}}{\sqrt{l_r}\cdot n^{1/2k_r}} \right)
$$
converges in distribution to a $\mathcal{N}(0,1)^{\otimes r}$-distributed random variable as $n\to \infty$. Here $l_i\in\mathbb{N}$ is such that $K_i$ is the conjugacy class of $x_{j_i}^{l_i}$, for $i=1,\ldots,r$.
\end{enumerate}
\item[(b)] As $n\to\infty$, $G_n$ converges to the trivial group as an IRS.
\end{itemize}
\end{theorem}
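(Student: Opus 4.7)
The approach rests on the fact that a non-cocompact Fuchsian group $\Lambda$ of finite covolume is isomorphic to a free product of the form
\[
\Lambda \cong F_k * C_{q_1} * \cdots * C_{q_s},
\]
where $F_k$ is free of rank determined by the genus and number of cusps of $\Lambda\backslash \mathbb{H}^2$, and the $C_{q_i}$ are cyclic of orders equal to the orders of the cone points. I would open the proof with this structural remark, which embeds $\Lambda$ into the class of groups for which the subgroup-growth asymptotics of Wilf--M\"uller--Volynets and the homomorphism counts used in the proof of Theorem \ref{thm_main3} apply essentially verbatim.

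For part (a)(1), the plan is to compute the joint factorial moments of $\bigl(Z_{K_1}(G_n), \ldots, Z_{K_r}(G_n)\bigr)$ following the template of the analogous statement in Theorem \ref{thm_main3}. Parametrising index $n$ subgroups by transitive homomorphisms $\phi: \Lambda \to S_n$ with a marked point, each $Z_{K_i}(G_n)$ becomes a function of the cycle structure of $\phi(g_i)$ for any $g_i \in K_i$. Because $\Lambda$ is a free product, the permutations assigned to the free generators are, absent the transitivity condition, independent uniform elements of $S_n$, while those assigned to the torsion generators are uniform on $\{\tau \in S_n : \tau^{q_i}=e\}$; conditioning on transitivity contributes only a $1+o(1)$ factor in the relevant moment formulas. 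A direct computation then shows that, for infinite-order $g_i = u_i^{k_i}$ with $u_i$ primitive, the factorial moments of the numbers of $j$-cycles of $\phi(u_i)$ (for $j\mid k_i$) converge to those of independent $\mathrm{Poisson}(1/j)$ variables. The hypothesis that no pair of the $K_i$ has a common root is precisely what guarantees that these limiting statistics are independent across different indices $i$, and the method of moments then yields the stated joint convergence.

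Part (a)(2) is exactly the content of \cite[Lemma 4]{MullerPuchta3}, which I would cite directly rather than reprove.

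For part (b), it suffices to show that for every non-trivial $g\in\Lambda$ one has $\mathbb{P}[g\in G_n]\to 0$, since this is the standard criterion for convergence of a sequence of IRS on a countable discrete group to the trivial IRS. Passing to the coset representation, $\mathbb{P}[g\in G_n]$ equals $\mathbb{E}[\#\mathrm{Fix}(\phi(g))]/n$. If $g$ has infinite order, part (a)(1) bounds this expectation by a constant; if $g$ has finite order $k$, part (a)(2) yields $\mathbb{E}[\#\mathrm{Fix}(\phi(g))]=O(n^{1/k})$. Dividing by $n$ gives $o(1)$ in both cases. The main technical obstacle will lie in part (a)(1): one must verify that the transitivity condition perturbs the independent-uniform model only by a negligible amount and that the joint factorial moments remain bounded uniformly in $n$, so that the method of moments applies simultaneously to all of the $K_i$.
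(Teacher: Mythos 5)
Your structural decomposition $\Lambda \cong F_k * C_{q_1} * \cdots * C_{q_s}$ and the overall plan of importing the moment computations from the $\Gamma_{p_1,\ldots,p_m}$ case match the paper's approach, and the reduction of part (b) to $\mu_n(Z_K)=o(n)$ via Lemma \ref{lem_IRSfixedpoints} is the right criterion. However, there is a genuine gap exactly where you flag ``the main technical obstacle'' and then leave it unresolved: the assertion that ``conditioning on transitivity contributes only a $1+o(1)$ factor.'' This is precisely the statement $t_n(\Lambda)/h_n(\Lambda)\to 1$, and it is not automatic --- it is the crux of the whole theorem. For $s=0$ it is Dixon's theorem; for $k=0$ it is M\"uller's Theorem \ref{thm_muller-transitive}; but for mixed $k>0$, $s>0$ the paper explicitly notes that no proof existed in the literature and supplies one, using the recurrence $1 - t_n/h_n = \sum_{k=1}^{n-1}\binom{n-1}{k-1} t_k h_{n-k}/h_n$ together with the Volynets--Wilf asymptotics and Stirling's formula for $h_n(F_k*C_{q_1}*\cdots*C_{q_s})$ to show the right-hand side vanishes. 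Without this ingredient you cannot transfer either the Poisson limit in (a)(1) or the expectation bounds in (b) from $\Hom(\Lambda,S_n)$ to the transitive model, since the transfer in the paper's Theorem \ref{thm_statisticsforC*...*C} and Corollary \ref{cor_BSconvC*...*C} is effected by sandwiching against the factor $h_n/t_n$.

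A secondary, smaller imprecision in (b): you invoke the conclusions of (a)(1) and (a)(2) (convergence in distribution, possibly after normalization) to deduce bounds on $\mathbb{E}[\#\mathrm{Fix}(\phi(g))]$, but convergence in distribution alone does not control expectations. What actually yields the needed bounds is the underlying factorial-moment computation (the analogue of Theorem \ref{thm_factorialmomentsY} for the first moment, and Corollary \ref{cor_cltonhom}(\ref{expectation_finite_order}) in the finite-order case), which you would need to cite directly rather than the distributional limits.
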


Note that the analogue to Theorem \ref{thm_main3}(c) also holds here. However, a much stronger statement follows directly from results by M\"uller--Schlage-Puchta \cite{MullerPuchta3}.

The case of free groups in the theorem above is very similar to results on cycle counts in random regular graphs in the permutation model (see for instance \cite{DJPP} and also \cite{Bol} for a slightly different model). In the case where $\Lambda$ is a free product of finite cyclic groups and $r=1$, Benaych-Georges \cite{BG1} proved point (1) of item (a) for certain conjugacy classes. 

For surface groups similar results have very recently been proved by Magee--Puder \cite{MageePuder}. The case of cocompact Fuchsian groups with torsion is currently open.

\subsection{The structure of the proofs}

Our proofs start with the subgroup growth. We combine results due to M\"uller \cite{Muller_FreeProducts,Muller}, Volynets \cite{Volynets} and Wilf \cite{Wilf} with a classical bound on the number of index $n$ subgroups of an extension (see Proposition \ref{prop_subgp_quotient}) in order to prove Theorem \ref{thm_main1}. Combining this with the recurrence relating the sequences $(a_n(\Gamma_{p_1,\ldots,p_m}))_n$ and $(h_n(\Gamma_{p_1,\ldots,p_m}))_n$ we then obtain Theorem \ref{thm_main2} as well.

The idea behind the proofs of our results on random subgroups is to first prove the analogous results for random index $n$ subgroups of $C_{p_1}*\cdots *C_{p_m}$ and then use the fact that most index $n$ subgroups of $\Gamma_{p_1,\ldots,p_m}$ come from index $n$ subgroups of $C_{p_1}*\cdots *C_{p_m}$ to upgrade these into results about $\Gamma_{p_1,\ldots,p_m}$.

First, we consider the problem of counting the number of fixed points of an element $g\in C_{p_1}*\cdots * C_{p_m}$ under a random homomorphism $C_{p_1}*\cdots * C_{p_m} \to S_n$. There are two cases to consider:
\begin{itemize}
\item M\"uller--Schlage-Puchta \cite{MullerPuchta2,MullerPuchta3} proved the central limit theorem we need for finite order elements. 
\item If $g$ is of infinite order, its number of fixed points can be approximated by a sum of multiples of Poisson-distributed random variables. For certain conjugacy classes, this was proved by Benaych-Georges in \cite{BG1}. We extend this to all conjugacy classes and also show that the statistics of different conjugacy classes are asymptotically independent. 
\end{itemize}
We prove our Poisson distribution result by estimating the factorial moments of the random variables that count the fixed points of $g$. We note that in all of our estimates, the error terms could be made explicit using the error terms in M\"uller's results \cite{Muller_FreeProducts}. Moreover, the Chen--Stein method (see for instance \cite{AGG,BHT,DJPP,CGS}) might give sharper bounds than the method of moments.

The fact that a conjugacy class $K\subset \Gamma_{p_1,\ldots,p_m}$ typically has very few lifts to $H_n$ if it does not lie in $L_{p_1,\ldots,p_m}$ and typically has $n$ lifts if it does (this is essentially Theorem \ref{thm_main3}(a)), implies that the IRS $H_n$ converges to $L_{p_1,\ldots,p_m}$ (Theorem \ref{thm_main3}(b)). Using results by Elek \cite{Elek} and L\"uck \cite{Lueck}, we then also obtain that the normalised Betti numbers of $H_n$ converge to the $\ell^2$-Betti numbers of the cover of $\widetilde{X}_{p_1,\ldots,p_m}/L_{p_1,\ldots,p_m}$.

In Section \ref{sec_fuchsian}, we sketch how to complete the proof of Theorem \ref{thm_main4}.

Finally, in Appendix \ref{app_elt_approach} we explain an alternative approach to Theorems \ref{thm_main1} and \ref{thm_main2} which goes through computing a closed formula for $h_n(\Gamma_{p_1,\ldots,p_m})$.

\subsection{Notes and references} 
As opposed to the case of $2$-manifolds \cite{Dixon,MullerPuchta,LiebeckShalev}, there are very few $3$-manifolds for which the subgroup growth is well understood. For instance, to the best of our knowledge, there isn't a single hyperbolic $3$-manifold group $\Gamma$ for which  the asymptotic behaviour of $a_n(\Gamma)$ is known. It does follow from largeness of these groups \cite{Agol} that the number 
\[ s_n(G) := \sum_{m\leq n} a_m(G)\]
grows faster than $(n!)^\alpha$ for some $\alpha>0$, but even at the factorial scale, the growth (i.e. the optimal $\alpha$) is not known. In the more general setting of lattices in $\mathrm{PSL}(2,\mathbb{C})$ it's known in one very particular case \cite[Section 2.5.2]{BPR}. One of the difficulties in determining $\alpha$ in general is that for a general hyperbolic $3$-manifold, no proof for a factorial lower bound is known that does not rely on Agol's work. 

For Seifert fibred manifolds a little more is known: the subgroup growth of orientable circle bundles over surfaces was determined by Liskovets and Mednykh \cite{LisMed} and the subgroup growth of Euclidean manifolds can be derived from general results on the subgroup growth of virtually abelian groups \cite{dSMcDS,Sulca}.

One can also ask for the number of distinct isomorphism types of subgroups, in which case even less is known \cite{FPPRR}. 

Finally, results similar to our Theorems \ref{thm_main1} and \ref{thm_main2} are known to hold for Baumslag--Solitar groups \cite{Kelley}.

The geometry of a random cover of a graph is a classical subject in the study of random regular graphs (see for instance \cite{AmitLinial,Friedman,DJPP,Puder}). Moreover, it is known that, as $n\to\infty$, a random $2d$-regular graph sampled uniformly from the set of such graphs on $n$ vertices as a model is contiguous to the model given by a random degree $n$ cover of a wedge of $d$ circles \cite{GJKW,Wor}. In other words, random covers are also a tool that can be used to study other models of random graphs.  

Random covers of manifolds are much less well understood. Of course, random graph covers also give rise to random covers of punctured surfaces, so some of the graph theory results can be transported to this context. Very recently, Magee--Puder \cite{MageePuder} and Magee--Naud--Puder \cite{MageeNaudPuder} studied random covers of closed hyperbolic surfaces. They proved that these covers Benjamini--Schramm converge to the hyperbolic plane and that the spectral gap of their Laplacian is eventually larger than $\frac{3}{16}-\varepsilon$ for all $\varepsilon>0$ (given that his holds for the base surface).

More general random surfaces (see for instance \cite{BM,GPY,Mir,Pet,PT,MP,MRR,BCP,GLST,Shrestha}), random $3$-manifolds (see for instance \cite{DT,Maher,BBGHHKPV,HV}) and random knots (see for instance \cite{EZ,BKLMR}) have recently also received considerable attention.

Invariant Random Subgroups were introduced by Ab\'ert--Glasner--Vir\'ag in \cite{AbertGlasnerVirag}, by Bowen in \cite{Bowen} and under a different name by Vershik in \cite{Vershik}, but had been studied in various guises before (see the references in \cite{AbertGlasnerVirag}). Benjamini--Schramm convergence was introduced for graphs in \cite{BenjaminiSchramm} and for lattices in Lie groups in \cite{7sam}. The fact that Benjamini--Schramm convergence implies convergence of normalised Betti numbers was proved for sequences of simplicial complexes in \cite{Elek}, for sequences of lattices in \cite{7sam} and for sequences of negatively curved Riemannian manifolds in \cite{ABBG}.

\subsection*{Acknowledgement}
We thank Jean Raimbault for useful remarks. Moreover, we are very grateful to two anonymous referees for pointing out errors in previous versions of Theorem \ref{thm_main3}(a) and for suggesting a shorter proof of Theorem \ref{thm_main1}.

\section{Preliminaries}

\subsection{Subgroup growth}\label{sec_prelim_subgroup_growth}

As mentioned in the introduction, our results on subgroup growth are based on the connection between finite index subgroups of a group $G$ and transitive permutation representations of $G$. Indeed, an index $n$ subgroup $H<G$ gives rise to a transitive action of $G$ on the finite set $G/H$ and as such, upon labelling the elements of $G/H$ with the numbers $1,\ldots,n$, a homomorphism $G\to S_n$. Here $S_n$ denotes the symmetric group on $n$ elements. This leads to the following (see \cite[Proposition 1.1.1]{LubotzkySegal} for a detailed proof):

\begin{proposition}\label{subgp_trans}	Let $G$ be a group and $n\in\mathbb{N}$. Then 
$$a_n(G) = \frac{t_n(G)}{(n-1)!},$$
where 
$$t_n(G) =\left|\{\varphi: G \to S_n |\ \varphi(G) \text{ acts transitively on } \{1,...n\} \}\right| . $$
\end{proposition}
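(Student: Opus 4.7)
The plan is to set up a bijection between transitive homomorphisms $\varphi:G\to S_n$ and pairs $(H,\lambda)$, where $H<G$ has index $n$ and $\lambda:G/H\to\{1,\ldots,n\}$ is a bijection with $\lambda(H)=1$. Since for each fixed $H$ there are exactly $(n-1)!$ such bijections $\lambda$ (the image of $H$ is prescribed, and the remaining $n-1$ cosets can be labelled arbitrarily), the identity $t_n(G)=(n-1)!\cdot a_n(G)$ will follow immediately.

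The first step is the construction going from pairs to homomorphisms. Given $(H,\lambda)$ as above, the left multiplication action of $G$ on $G/H$ transported along $\lambda$ yields a homomorphism $\varphi_{H,\lambda}:G\to S_n$ defined by $\varphi_{H,\lambda}(g)(i)=\lambda(g\cdot\lambda^{-1}(i))$. Since the $G$-action on $G/H$ is transitive, so is the induced action on $\{1,\ldots,n\}$. Conversely, given a transitive $\varphi:G\to S_n$, let $H_\varphi=\{g\in G:\varphi(g)(1)=1\}$ be the stabiliser of $1$. By the orbit-stabiliser theorem and transitivity, $[G:H_\varphi]=n$, and the map $\lambda_\varphi:G/H_\varphi\to\{1,\ldots,n\}$, $gH_\varphi\mapsto\varphi(g)(1)$, is a well-defined bijection sending $H_\varphi$ to $1$.

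The second step is to verify these two constructions are mutually inverse. A direct computation shows $\varphi_{H_\varphi,\lambda_\varphi}=\varphi$, using the group homomorphism property to rewrite $\varphi(g\cdot g_i)(1)=\varphi(g)(\varphi(g_i)(1))=\varphi(g)(i)$, where $g_i\in G$ is any element with $\varphi(g_i)(1)=i$. In the other direction, given $(H,\lambda)$, the stabiliser of $1$ under $\varphi_{H,\lambda}$ is $\{g:g H=H\}=H$, and the bijection recovered from $\varphi_{H,\lambda}$ agrees with $\lambda$ upon evaluating at $i=1$: $\lambda_{\varphi_{H,\lambda}}(gH)=\varphi_{H,\lambda}(g)(1)=\lambda(gH)$.

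I do not expect any real obstacle here: the proposition is a classical bookkeeping statement, and the only point worth being careful about is the normalisation choice $\lambda(H)=1$, which ensures the $(n-1)!$ factor rather than $n!$ and eliminates the redundancy of relabelling the distinguished coset. Once the bijection above is established, summing over all transitive $\varphi$ and partitioning by the value of $H_\varphi$ yields $t_n(G)=\sum_H (n-1)!=(n-1)!\cdot a_n(G)$, which is the claimed identity.
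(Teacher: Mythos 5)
Your proof is correct and is the standard bijection argument; the paper itself does not prove this proposition but cites \cite[Proposition 1.1.1]{LubotzkySegal}, whose proof is essentially the one you give. The key normalisation $\lambda(H)=1$ (equivalently, taking the stabiliser of the distinguished point $1$) is exactly what produces the $(n-1)!$ rather than $n!$, and your verification that the two constructions are mutually inverse is complete.
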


We shall also be using \cite[Proposition 1.3.2]{LubotzkySegal}, which states:

\begin{proposition}\label{prop_subgp_quotient}
	Let $G$ be a group, $N$ a normal subgroup of $G$, and $Q = G/N$. Then
	$$a_n(G) \leq \sum_{t|n} a_{n/t}(Q)a_t(N)t^{rk(Q)}.$$
\end{proposition}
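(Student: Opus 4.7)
The plan is to partition the index-$n$ subgroups $H \le G$ by the pair $(\bar H, K)$ they induce, where $\bar H = \pi(H) = HN/N \le Q$ and $K = H \cap N \le N$, and then bound the number of $H$'s compatible with each such pair.

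For the partitioning, note that $N \lhd G$ implies $K \lhd H$, and the second isomorphism theorem gives $H/K \cong HN/N = \bar H$, forcing $[HN:H] = [N:K]$. Calling this common integer $t$, the tower of indices $[G:H] = [G:HN]\cdot[HN:H] = [Q:\bar H]\cdot t$ shows that $t \mid n$, $[Q:\bar H] = n/t$ and $[N:K] = t$. For each divisor $t$ of $n$ there are therefore $a_{n/t}(Q)$ choices for $\bar H$ and $a_t(N)$ choices for $K$, which accounts for all but the $t^{rk(Q)}$-factor in the proposition.

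The core of the argument is to bound, for a fixed triple $(t, \bar H, K)$, the number of subgroups $H$ realising it by $t^{rk(Q)}$. I would encode $H$ via its transitive left-coset representation $\varphi: G \to S_n$ on $G/H$ with the coset $H$ labelled $1$, so that $H = \mathrm{Stab}_\varphi(1)$ recovers $H$ uniquely from $\varphi$. The restriction $\varphi|_N$ has precisely $n/t$ orbits of size $t$; these form a $G$-invariant block system whose induced $Q$-action on blocks is the action on $Q/\bar H$, and whose $N$-action on the block $B_1$ containing $1$ is the action on $N/K$. All of this data is determined by $(\bar H, K)$. Fixing generators $g_1, \ldots, g_d$ of $Q$ with $d = rk(Q)$ and arbitrary lifts $\hat g_i \in G$, one has $G = \langle N, \hat g_1, \ldots, \hat g_d \rangle$, so $\varphi$ is determined by $\varphi|_N$ together with the permutations $\varphi(\hat g_1), \ldots, \varphi(\hat g_d)$.

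The main obstacle is to show that each $\varphi(\hat g_i)$ has at most $t$ admissible realisations, given the two constraints that $\varphi(\hat g_i)$ must permute blocks according to the prescribed action of $g_i$ on $Q/\bar H$, and that it must intertwine $\varphi|_N$ with its $\hat g_i$-conjugate. The value $\varphi(\hat g_i)(1)$ must lie in the target block $g_i\cdot B_1$ of size $t$, giving the factor $t$; the intertwining relation then determines $\varphi(\hat g_i)$ on the whole $N$-orbit $B_1$. The delicate step is propagating this determination to all remaining blocks, which one handles by walking outward along a spanning tree of the block-graph via successive applications of the $\hat g_j$ to $1$, carefully tracking that the $N$-actions on distinct blocks are only conjugate (not equal) to the action on $N/K$. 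Granting this propagation yields at most $t$ choices per generator, hence at most $t^{rk(Q)}$ subgroups $H$ per fiber, and summing over $t \mid n$ gives the inequality.
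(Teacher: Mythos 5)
The paper cites this proposition from Lubotzky--Segal's book and does not supply its own proof, so there is no internal argument to compare against. Your decomposition --- partitioning the index-$n$ subgroups by the pair $(\bar H, K) = (HN/N, H\cap N)$ and counting pairs by $a_{n/t}(Q)\,a_t(N)$ --- is correct and matches the standard approach.

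The step you flag as delicate is a genuine gap, and in fact the whole strategy of paying $t$ per generator of $Q$ cannot be made to work, because the bound $t^{d(Q)}$ on each fibre is simply false. Take $G = F_2 \times \mathbb{Z}$, $N = \{1\}\times\mathbb{Z}$, $Q = F_2$, and $n = 4$: one computes $a_4(G) = 111$, while $\sum_{t\mid 4}a_{4/t}(F_2)\,a_t(\mathbb{Z})\,t^2 = 71 + 12 + 16 = 99$. The culprit is the $t=2$ term: each of the $a_2(F_2)=3$ index-two subgroups $\bar H \leq F_2$ is free of rank $3$, and the corresponding $H$'s (graphs of homomorphisms $\bar H \to \mathbb{Z}/2\mathbb{Z}$) number $2^3 = 8 > 4 = t^2$. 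This already tells you the data $\varphi(\hat g_1)(1),\ldots,\varphi(\hat g_d)(1)$ cannot pin down $H$. Concretely, the intertwining relation $\varphi(\hat g_i)\varphi(n) = \varphi(\hat g_i n\hat g_i^{-1})\varphi(\hat g_i)$ evaluated at $1$ refers to the $N$-action on the target block $g_iB_1$, which is known only up to an unrecorded conjugation; you are stuck before the spanning-tree walk even begins. The actual Lubotzky--Segal argument is different: since $K \triangleleft H$ one has $H \leq N_M(K)$ (with $M$ the preimage of $\bar H$), and $H/K$ is a complement to $N_N(K)/K$ in $N_M(K)/K$; the number of complements is at most $\big|Z^1\big(\bar H,\, N_N(K)/K\big)\big| \leq t^{\,d(\bar H)}$, a derivation being determined by its values on a generating set of $\bar H$. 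The exponent is therefore $d(\bar H)$ rather than $d(Q)$, and $\mathrm{rk}(Q)$ in their book denotes the upper rank $\sup\{d(B):B\leq Q\}$, which dominates every $d(\bar H)$ by definition. Replacing generators of $Q$ by generators of $\bar H$, and counting derivations rather than images of a single point, is the idea your proposal is missing.
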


In order to count fixed points of finite order elements under random homomorphisms later on, we will need to split homomorphisms $\Gamma \to S_n$ according to their orbits. Given a partition $\pi = (\pi_1,\ldots, \pi_r)$ (a non decreasing sequence of positive integers called the parts of $\pi$), we will write 
$$
h_\pi(\Gamma)=\left|\left\{\rho\in\Hom(\Gamma,S_{|\pi|});\begin{array}{c}\text{the orbits of }\rho(\Gamma)\text{ on }\{1,\ldots,|\pi|\}\\ \text{have sizes }\pi_1,\ldots, \pi_r \end{array}\right\}\right|,
$$
where $|\pi| = \pi_1 +\ldots + \pi_r$. 

Given a group $\Gamma$, we will encode these numbers in an exponential generating function
$$
 F_\Gamma(x,y) = \sum_\pi \frac{h_\pi(\Gamma)}{|\pi|!} \; x^{|\pi|} y^\pi,
$$
 in an infinite number of formal variables $x$, $y_1,y_2,\ldots$, where $y^\pi = \prod_i y_{\pi_i}$.

This generating function can be explicitly computed in terms of the sequence $a_n(\Gamma)$:
\begin{lemma}\label{lem_expprinc} Let $\Gamma$ be a finitely generated group. We have
$$
F_\Gamma(x,y) = \exp\left(\sum_{i=1}^{\infty} \frac{a_i(\Gamma)}{i} x^{i}y_i \right).
$$
\end{lemma}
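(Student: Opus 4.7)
The identity is an instance of the exponential formula: the ``connected'' structures of size $k$ are transitive permutation representations of $\Gamma$ on a $k$-element set, and all structures are decompositions into connected ones on the orbits. I would set this up explicitly as follows.

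First, invoke Proposition \ref{subgp_trans} to record that the number of transitive homomorphisms $\Gamma\to S_k$ equals $t_k(\Gamma)=(k-1)!\,a_k(\Gamma)$. Next, given a homomorphism $\rho:\Gamma\to S_n$ whose orbits on $\{1,\ldots,n\}$ have sizes $\pi_1,\ldots,\pi_r$ (so $|\pi|=n$), restricting $\rho$ to each orbit yields a transitive permutation representation of $\Gamma$ on that block. Conversely, $\rho$ is determined by the unordered partition of $\{1,\ldots,n\}$ into these blocks together with a transitive $\Gamma$-action on each block. Writing $m_j=\#\{i:\pi_i=j\}$, the number of unordered set partitions of $\{1,\ldots,n\}$ with block sizes given by $\pi$ is $n!/\bigl(\prod_i \pi_i!\cdot\prod_j m_j!\bigr)$, and on each block of size $k$ there are $t_k(\Gamma)$ choices of transitive action. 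Hence
$$
h_\pi(\Gamma)=\frac{n!}{\prod_j m_j!}\prod_i\frac{(\pi_i-1)!\,a_{\pi_i}(\Gamma)}{\pi_i!}\cdot\pi_i!\cdot\frac{1}{\pi_i!}\cdot\pi_i!=\frac{n!}{\prod_j m_j!}\prod_i\frac{a_{\pi_i}(\Gamma)}{\pi_i}.
$$

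Plugging this into the definition of $F_\Gamma$ gives
$$
\frac{h_\pi(\Gamma)}{|\pi|!}\,x^{|\pi|}y^\pi=\prod_j\frac{1}{m_j!}\left(\frac{a_j(\Gamma)}{j}\,x^jy_j\right)^{m_j},
$$
so that summing over all partitions $\pi$ (equivalently, over all sequences $(m_j)_{j\geq 1}$ of non-negative integers with finite support) factorises as a product over $j$ and each factor is an exponential series:
$$
F_\Gamma(x,y)=\prod_{j\geq 1}\sum_{m_j\geq 0}\frac{1}{m_j!}\left(\frac{a_j(\Gamma)}{j}\,x^jy_j\right)^{m_j}=\exp\!\left(\sum_{j\geq 1}\frac{a_j(\Gamma)}{j}\,x^jy_j\right),
$$
as claimed.

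The argument is essentially bookkeeping, and the only step requiring care is the counting of unordered set partitions with prescribed block sizes and the symmetry factor $\prod_j m_j!$ that arises from permuting blocks of equal size; this is what matches the $1/m_j!$ coming from expanding the exponential. Everything is done in the ring of formal power series in countably many variables, so convergence is not an issue. I would present the calculation directly, without invoking an abstract ``exponential formula'' theorem, since the derivation is short and self-contained.
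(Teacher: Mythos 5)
Your proposal is the standard exponential-formula argument, and it is correct; the paper states Lemma~\ref{lem_expprinc} without proof, clearly intending exactly this derivation (the label ``\texttt{lem\_expprinc}'' refers to the exponential principle), so you have supplied precisely the argument the authors had in mind. The decomposition of a homomorphism $\rho:\Gamma\to S_n$ into the set partition given by its orbits together with a transitive $\Gamma$-action on each block, the count of $n!/(\prod_i\pi_i!\prod_j m_j!)$ for unordered set partitions with prescribed block sizes, and the substitution $t_k(\Gamma)=(k-1)!\,a_k(\Gamma)$ from Proposition~\ref{subgp_trans} together give $h_\pi(\Gamma)=\frac{n!}{\prod_j m_j!}\prod_i\frac{a_{\pi_i}(\Gamma)}{\pi_i}$, after which the generating function factorises over part sizes exactly as you say.

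One small slip: the middle expression in your display for $h_\pi(\Gamma)$, namely $\prod_i\frac{(\pi_i-1)!\,a_{\pi_i}(\Gamma)}{\pi_i!}\cdot\pi_i!\cdot\frac{1}{\pi_i!}\cdot\pi_i!$, simplifies to $\prod_i(\pi_i-1)!\,a_{\pi_i}(\Gamma)$ rather than to $\prod_i\frac{a_{\pi_i}(\Gamma)}{\pi_i}$; the stray factors $\cdot\,\pi_i!\cdot\frac{1}{\pi_i!}\cdot\pi_i!$ do not belong. The correct chain is simply $h_\pi(\Gamma)=\frac{n!}{\prod_i\pi_i!\,\prod_j m_j!}\cdot\prod_i(\pi_i-1)!\,a_{\pi_i}(\Gamma)=\frac{n!}{\prod_j m_j!}\prod_i\frac{a_{\pi_i}(\Gamma)}{\pi_i}$. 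This is a typographical issue only; the final formula and the rest of the argument are right.
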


Another result we will need is on the asymptotic number of homomorphisms $C_m \to S_n$ (or equivalently the number of elements of order $m$ in $S_n$). The result we will use is due to Volynets \cite{Volynets} and independently Wilf \cite{Wilf} and fits into a large body of work, starting with classical results by Chowla--Herstein--Moore \cite{CHM}, Moser--Wyman \cite{MW}, Hayman \cite{Hayman} and Harris--Schoenfeld \cite{HS} and culminating in a paper by M\"uller \cite{Muller} in which the asymptotic behaviour of $h_n(G)$ as $n\to\infty$ is determined for any finite group $G$. It states:

\begin{theorem}[Volynets \cite{Volynets}, Wilf \cite{Wilf}]\label{hn_cyclic} Let $m_1,\ldots,m_k \in \mathbb{N}$. Then
$$
h_n(C_m) \sim A_m \cdot \exp\left( \sum_{\substack{d|m \\ d<m}} \frac{1}{d}n^{d/m}\right) \cdot  \left(\frac{n}{e}\right)^{n \cdot \left(1 - \frac{1}{m}\right)} 
$$
as $n\to\infty$. Here
$$
A_m = \left\{\begin{array}{ll}
 m^{-1/2} ; & m \text{ odd}\\
 m^{-1/2}\exp\left(-\frac{1}{2m}\right); & m \text{ even.} 
   \end{array}\right.
$$
\end{theorem}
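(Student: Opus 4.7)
The plan is to apply the saddle-point method to the exponential generating function of $(h_n(C_m))_n$. Since $h_n(C_m)$ counts permutations in $S_n$ all of whose cycle lengths divide $m$, the exponential formula gives the closed form
\[
F(x) \;:=\; \sum_{n \geq 0} \frac{h_n(C_m)}{n!}\, x^n \;=\; \exp\!\left(\sum_{d \mid m} \frac{x^d}{d}\right).
\]
This entire function has the form $\exp(P)$ for a polynomial $P$ with nonnegative coefficients and $\gcd\{d : d \mid m\} = 1$, so it is Hayman-admissible, and therefore
\[
h_n(C_m) \;=\; n!\,[x^n]F(x) \;\sim\; \frac{n!\, F(r_n)}{r_n^{\,n}\,\sqrt{2\pi\, b(r_n)}},
\]
where $r_n > 0$ is the unique solution of the saddle-point equation $\sum_{d \mid m} r_n^d = n$ and $b(r) := \sum_{d \mid m} d\, r^d$. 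Since the left-hand side of the saddle-point equation is dominated by $r^m$, I would solve perturbatively to obtain $r_n = n^{1/m}(1 + \varepsilon_n)$ with $\varepsilon_n = -\tfrac{1}{m}\sum_{d \mid m,\, d < m} n^{d/m-1}(1+o(1))$, and $b(r_n) \sim m n$.

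Next I would carry out the asymptotic expansion of $A_n := \log F(r_n) - n\log r_n$. Writing $n\log r_n = \tfrac{n}{m}\log(r_n^m)$ and using $r_n^m = n - \sum_{d<m,\,d\mid m} r_n^d$, I would Taylor-expand the logarithm in $v := \tfrac{1}{n}\sum_{d<m,\,d\mid m} r_n^d$ and combine with $\log F(r_n) = \sum_{d \mid m} r_n^d/d$. After the $\pm 1/m$ contributions cancel, this gives
\[
A_n \;=\; \frac{n}{m} \;-\; \frac{n}{m}\log n \;+\; \sum_{\substack{d\mid m \\ d<m}}\frac{r_n^d}{d} \;+\; \frac{1}{2mn}\Bigl(\sum_{\substack{d\mid m \\ d<m}} r_n^d\Bigr)^{\!2} \;+\; o(1).
\]
To replace $\sum r_n^d/d$ by $\sum n^{d/m}/d$ I would then use $r_n^d = n^{d/m}(1 + d\varepsilon_n + O(\varepsilon_n^2))$, which introduces a further constant-order correction.

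The crucial observation is that both the constant-order correction from the $v^2/2$ term and the one from expanding $r_n^d$ reduce to a sum over ordered pairs $(d_1, d_2)$ of proper divisors of $m$ with $d_1 + d_2 = m$. Because $d_1 \mid m = d_1 + d_2$ forces $d_1 \mid d_2$ and symmetrically, such a pair exists precisely when $d_1 = d_2 = m/2$, i.e.\ when $m$ is even, in which case it is unique. A short computation then shows that the $v^2/2$ term contributes $+\ind_{m\text{ even}}/(2m)$ while the $r_n^d$-expansion contributes $-\ind_{m\text{ even}}/m$, giving a net constant of $-\ind_{m\text{ even}}/(2m)$. Substituting Stirling's formula $n! \sim \sqrt{2\pi n}(n/e)^n$ into the saddle-point estimate and simplifying yields
\[
h_n(C_m) \;\sim\; \frac{1}{\sqrt{m}}\,\exp\!\left(-\frac{\ind_{m\text{ even}}}{2m}\right)\,\exp\!\left(\sum_{\substack{d\mid m \\ d < m}} \frac{n^{d/m}}{d}\right)\left(\frac{n}{e}\right)^{n(1-1/m)},
\]
exactly as asserted. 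The main obstacle is this parity-sensitive bookkeeping: one must verify that all higher-order terms in the expansions of $\log(1-v)$, $(1+\varepsilon_n)^d$, and in the solution of the saddle-point equation itself contribute $o(1)$, so that the only surviving constant is the one above.
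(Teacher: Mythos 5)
Your saddle-point/Hayman argument is correct and is essentially the same method used in the cited work of Wilf. The one genuinely delicate step — identifying that the only constant-order corrections to $\log F(r_n) - n\log r_n$ come from ordered pairs $(d_1,d_2)$ of proper divisors of $m$ with $d_1+d_2=m$, and that the divisibility argument ($d_1\mid m=d_1+d_2$ forces $d_1\mid d_2$ and symmetrically, hence $d_1=d_2=m/2$) shows such a pair exists uniquely iff $m$ is even — is handled correctly, and the resulting $+\ind_{m\text{ even}}/(2m)$ from the $v^2/2$ term plus $-\ind_{m\text{ even}}/m$ from expanding $r_n^d=n^{d/m}(1+\varepsilon_n)^d$ does combine to give the stated constant $-\ind_{m\text{ even}}/(2m)$; the remaining $o(1)$ verifications go through since $v\lesssim n^{-1/2}$ uniformly (as $d_{\max}\le m/2$).
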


Finally, we will need the following result by M\"uller:
\begin{theorem}[M\"uller \cite{Muller_FreeProducts}]\label{thm_muller-transitive}
Let $p_1,\ldots,p_m \in \mathbb{N}_{>1}$ such that $\sum_{i=1}^m \frac{1}{p_i} < m-1$. Then
$$
t_n(C_{p_1}*\cdots * C_{p_m}) \sim h_n(C_{p_1}*\cdots*C_{p_m}) \quad \text{as }n\to\infty.
$$
\end{theorem}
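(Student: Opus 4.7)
The plan is to apply an orbit-decomposition recursion and dominate the resulting error sum using the asymptotics from Theorem \ref{hn_cyclic}. Writing $\Gamma = C_{p_1}*\cdots*C_{p_m}$, every $\rho\in\Hom(\Gamma,S_n)$ splits uniquely as a family of transitive actions on its orbits, and conditioning on the orbit of the basepoint $1\in\{1,\ldots,n\}$ yields the standard recursion
\[
h_n(\Gamma) \;=\; t_n(\Gamma) \;+\; \sum_{k=1}^{n-1}\binom{n-1}{k-1}\, t_k(\Gamma)\, h_{n-k}(\Gamma).
\]
The theorem is therefore equivalent to showing that the error sum is $o(h_n(\Gamma))$, and since $t_k(\Gamma)\leq h_k(\Gamma)$ it suffices to prove
\[
R_n \;:=\; \frac{1}{h_n(\Gamma)}\sum_{k=1}^{n-1}\binom{n-1}{k-1}\, h_k(\Gamma)\, h_{n-k}(\Gamma) \;\longrightarrow\; 0.
\]

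For the asymptotic input, I would use that $\Hom(\Gamma,S_n)=\prod_j\Hom(C_{p_j},S_n)$ gives $h_n(\Gamma)=\prod_{j=1}^m h_n(C_{p_j})$, so Theorem \ref{hn_cyclic} implies
\[
h_n(\Gamma) \;\sim\; A\cdot n^{-m/2}\cdot S(n)\cdot \left(\frac{n}{e}\right)^{n\beta}, \qquad \beta := m-\sum_{i=1}^m \frac{1}{p_i},
\]
with $A>0$ a constant and $S(n):=\exp\bigl(\sum_i\sum_{d\mid p_i,\, d<p_i} n^{d/p_i}/d\bigr)$. Any divisor $d$ of $p_i$ with $d<p_i$ satisfies $d/p_i\leq 1/2$, so $S(n)=\exp(O(n^{1/2}))$. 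The hypothesis $\sum 1/p_i<m-1$ is precisely $\beta>1$, and this strict inequality drives the whole argument. Combining with Stirling's formula (and its boundary refinements), a direct computation yields
\[
\binom{n-1}{k-1}\,\frac{h_k(\Gamma)\,h_{n-k}(\Gamma)}{h_n(\Gamma)} \;=\; \mathrm{poly}(n,k)\cdot\frac{S(k)\,S(n-k)}{S(n)}\cdot\exp\!\bigl(-n(\beta-1)H(k/n)\bigr),
\]
where $H(p)=-p\log p-(1-p)\log(1-p)\geq 0$ is the entropy.

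I would then split the sum into a boundary range and a bulk range. The dominant contribution comes from $k$ near the right endpoint: setting $j=n-k$, a direct expansion gives a contribution of order $n^{-j(\beta-1)}$, so the $j=1$ term contributes $O(n^{-(\beta-1)})$ and the whole boundary range $\{1,\ldots,K\}\cup\{n-K,\ldots,n-1\}$ (for a large fixed $K$) contributes $O(n^{-(\beta-1)})$. For the bulk range $K\leq k\leq n-K$, the entropy $H(k/n)$ is bounded below by a constant depending on $K$, so the factor $\exp(-n(\beta-1)H(k/n))$ is exponentially small in $n$ and easily dominates both $S(n)=\exp(O(n^{1/2}))$ and the $O(n)$ number of terms. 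The main obstacle is controlling the subexponential factor $S(k)S(n-k)/S(n)$ uniformly in $k$: it has to be shown that it never competes with the entropy-driven decay, and this is precisely where the elementary inequality $d/p_i\leq 1/2$ is indispensable, as it keeps $S$ strictly subexponential on every scale relevant to the estimate. Once these pieces are assembled, $R_n\to 0$, which proves the theorem.
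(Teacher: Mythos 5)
Your strategy --- the orbit-decomposition recursion $h_n = t_n + \sum_k \binom{n-1}{k-1}t_k\,h_{n-k}$ combined with the asymptotics of Theorem \ref{hn_cyclic} --- is the expected route. The paper itself only cites M\"uller for this statement without reproducing the proof, but it uses exactly this recursion and the same dominated-by-endpoints philosophy elsewhere (in the proof of Corollary \ref{cor_hn_tn} via Lemma \ref{lem_rapidgrowth}, and in the sketch of Theorem \ref{thm_main4}). So the overall approach is the right one, and your entropy reformulation of the summand is correct up to polynomial factors.

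However, the bulk estimate has a genuine gap. You assert that for $K\leq k\leq n-K$ with $K$ a large fixed constant, $H(k/n)$ is bounded below by a constant depending on $K$, so that $\exp(-n(\beta-1)H(k/n))$ is exponentially small. This is false near the edges of that range: for $k=K$ fixed and $n\to\infty$ one has $H(K/n)\sim K\log(n/K)/n\to 0$, so $\exp(-n(\beta-1)H(K/n))\sim n^{-(\beta-1)K}$ is only \emph{polynomially} small. It therefore does not automatically dominate the $S(k)S(n-k)/S(n) = \exp(O(n^{1/2}))$ factor times the $O(n)$ number of terms, which is exactly the ``main obstacle'' you flag but do not resolve. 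To close the argument you need either a finer split --- a transition zone $K\leq k\leq \varepsilon n$ where you track that the exponent behaves like $-(\beta-1)k\log(n/k)+O(k^{1/2})$ and hence decays once $K$ is chosen large enough, plus a genuine bulk $\varepsilon n\leq k\leq(1-\varepsilon)n$ where the entropy really is bounded below --- or, more cleanly, a monotonicity argument: show the ratio of consecutive summands is $\leq 1$ on $[K,n/2]$ for $n$ large, so the sum is controlled by its first $K$ and last $K$ terms. The latter is precisely the mechanism of Lemma \ref{lem_rapidgrowth}, and patching your proof to use it would make it rigorous. (A small side remark: Theorem \ref{hn_cyclic} as stated in the paper has no power of $n$ in front, so the $n^{-m/2}$ in your asymptotic for $h_n(\Gamma)$ should not appear; this is harmless for the structure of the argument.)
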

In fact, M\"uller also provides error terms and proves the theorem for more general groups; we refer to his paper for details.

\subsection{Probability theory}

For our Poisson approximation results, we will use the method of moments. Given a random variable $Z:\Omega \to \mathbb{N}$ and $k\in\mathbb{N}$, we will write
$$
(Z)_k = Z(Z-1)\cdots (Z-k+1).
$$
Moreover, recall that a sequence of random variables $Z_n:\Omega_n\to\mathbb{N}^d$ is said to converge \emph{jointly in distribution} to a random variable $Z:\Omega\to\mathbb{N}^d$ if and only if
$$
\mathbb{P}[Z_n\in A] \stackrel{n\to\infty}{\longrightarrow} \mathbb{P}[Z\in A] \quad \forall A\subset \mathbb{N}^d.
$$
The following theorem is classical. For a proof see for instance \cite[Theorem 1.23]{Bol_book}.
\begin{theorem}[The method of moments]\label{thm_moments}
Let $Z_{n,1},Z_{n,2},\ldots,Z_{n,r}:\Omega_n\to\mathbb{N}$, $n\in\mathbb{N}$ be random variables. If there exist $\lambda_1,\ldots,\lambda_r>0$ such that for all $k_1,\ldots,k_r \in \mathbb{N}$
$$
\lim_{n\to\infty}\mathbb{E}\left[(Z_{n,1})_{k_1}(Z_{n,2})_{k_2}\cdots (Z_{n,r})_{k_r}\right] = \lambda_1^{k_1}\lambda_2^{k_2}\cdots \lambda_r^{k_r},
$$
then $(Z_{n,1},\ldots,Z_{n,r}):\Omega_n\to\mathbb{N}^r$ converges jointly in distribution to a vector of random variables $(Z_1,\ldots,Z_r):\Omega\to\mathbb{N}^r$ where
\begin{itemize}
\item $Z_i\sim\mathrm{Poisson}(\lambda_i)$, $i=1,\ldots,r$
\item The random variables $Z_1,\ldots,Z_r$ form an independent family.
\end{itemize}
\end{theorem}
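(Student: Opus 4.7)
The plan is to reduce joint convergence in distribution to convergence of the multivariate probability generating function (PGF), and then invoke the fact that the Poisson distribution is determined by its moments.

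First I would recall the factorial-moment expansion of the univariate PGF: for a random variable $Z:\Omega\to\mathbb{N}$ and $s$ in a neighbourhood of $0$,
\[
\mathbb{E}\bigl[(1+s)^{Z}\bigr] \;=\; \sum_{k\geq 0}\binom{Z}{k}\mathbb{E}\,s^k \;=\; \sum_{k\geq 0}\frac{\mathbb{E}[(Z)_k]}{k!}\,s^k,
\]
provided the series converges. For $Z\sim\mathrm{Poisson}(\lambda)$ one has $\mathbb{E}[(Z)_k]=\lambda^k$, and so the right-hand side sums to $e^{\lambda s}$, valid for all $s\in\mathbb{R}$. Iterating over coordinates, the multivariate PGF of a vector $(Z_1,\dots,Z_r)$ of independent Poissons with parameters $\lambda_1,\ldots,\lambda_r$ is $\prod_{i=1}^r e^{\lambda_i s_i}$, and its formal expansion in the $s_i$ has coefficients $\prod_i \lambda_i^{k_i}/k_i!$, exactly the product of factorial moments.

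Next, I would apply the hypothesis term by term. Since $\mathbb{E}[(Z_{n,1})_{k_1}\cdots (Z_{n,r})_{k_r}]\to\prod_i \lambda_i^{k_i}$ for every tuple $(k_1,\ldots,k_r)$, a formal manipulation yields
\[
\lim_{n\to\infty} \mathbb{E}\!\left[\prod_{i=1}^r (1+s_i)^{Z_{n,i}}\right] \;=\; \prod_{i=1}^r e^{\lambda_i s_i}
\]
for $s_i$ in a neighbourhood of the origin. Because the multivariate PGF uniquely determines a distribution on $\mathbb{N}^r$ (the joint probability mass function can be read off from its Taylor coefficients at $0$), this convergence of PGFs implies that $(Z_{n,1},\ldots,Z_{n,r})$ converges jointly in distribution to a vector of independent Poissons, which is the claim.

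The main obstacle is justifying the interchange of $\lim_{n\to\infty}$ with the infinite summation in the PGF expansion. I would handle this in one of two ways. A direct route is a dominated-convergence argument: the convergent factorial-moment sequences are in particular bounded, so for $|s_i|$ small enough the partial sums of $\prod_i(1+s_i)^{Z_{n,i}}$ admit a uniform integrable majorant, and term-by-term passage to the limit is legitimate. A slicker route is to convert factorial moments to ordinary joint moments via the Stirling-number change of basis (the transformation is triangular with integer entries depending only on the $k_i$, not on $n$), so that convergence of factorial moments is equivalent to convergence of ordinary moments; one then invokes the multivariate Fréchet--Shohat theorem, whose hypothesis is met because the product Poisson distribution satisfies Carleman's condition in each coordinate and hence is uniquely determined by its moments. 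Either argument delivers Theorem \ref{thm_moments}.
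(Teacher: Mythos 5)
The paper does not actually prove this statement; it is presented as classical with a pointer to Bollob\'as \cite[Theorem 1.23]{Bol_book}, so there is no internal argument to compare against and your attempt has to stand on its own.

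Your overall strategy is right, and the second route you offer is sound: convergence of all joint factorial moments is equivalent, via the triangular Stirling-number change of basis, to convergence of all joint ordinary moments, and the product Poisson distribution is moment-determinate (each marginal satisfies Carleman's condition, which for the multivariate problem is sufficient for joint determinacy), so the multivariate Fr\'echet--Shohat theorem applies. This is a valid proof modulo citing those two classical facts.

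The first route, however, contains a genuine gap. You write that ``the convergent factorial-moment sequences are in particular bounded, so for $|s_i|$ small enough the partial sums of $\prod_i(1+s_i)^{Z_{n,i}}$ admit a uniform integrable majorant.'' Convergence gives, for each fixed tuple $(k_1,\ldots,k_r)$, a bound $M(k_1,\ldots,k_r):=\sup_n\mathbb{E}[(Z_{n,1})_{k_1}\cdots(Z_{n,r})_{k_r}]<\infty$, but the hypothesis gives you no control whatsoever on how $M$ grows in $k$, so there is no guarantee that $\sum_{k}\frac{M(k)}{k_1!\cdots k_r!}\prod_i|s_i|^{k_i}$ converges for \emph{any} $s_i>0$. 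Worse, for each fixed $n$ the random variable $Z_{n,i}$ can have arbitrarily heavy tails compatible with the moment hypothesis, so $\mathbb{E}\bigl[(1+s)^{Z_{n,i}}\bigr]$ may well be $+\infty$ for every $s>0$; the factorial-moment expansion of the PGF you start from is then vacuous. The way the classical proof avoids this interchange problem entirely is via the Bonferroni (inclusion--exclusion) inequalities: for a nonnegative integer-valued $Z$, every $j\geq 0$ and every $m\geq 0$,
\[
\sum_{\ell=j}^{j+2m+1}\frac{(-1)^{\ell-j}}{j!\,(\ell-j)!}\,\mathbb{E}\bigl[(Z)_\ell\bigr]\;\leq\;\mathbb{P}[Z=j]\;\leq\;\sum_{\ell=j}^{j+2m}\frac{(-1)^{\ell-j}}{j!\,(\ell-j)!}\,\mathbb{E}\bigl[(Z)_\ell\bigr],
\]
with a multivariate analogue obtained by expanding $\prod_i\mathbf{1}_{Z_i=j_i}$. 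These are \emph{finite} alternating sums of factorial moments, so one may let $n\to\infty$ termwise, then let $m\to\infty$ and use $\sum_\ell\lambda^\ell/\ell!<\infty$ to recover the Poisson masses. No infinite sum is ever interchanged with a limit in $n$. If you want to use the PGF/dominated-convergence phrasing, you would need an additional hypothesis (e.g.\ a uniform-in-$n$ bound $\mathbb{E}[(Z_{n,i})_k]\leq C^k$) that the theorem as stated does not provide, so I would either present the Bonferroni argument or fall back on your Stirling/Fr\'echet--Shohat route.
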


\subsection{Invariant Random Subgroups}\label{sec_IRS}

We will phrase our results on random subgroups in the language of Invariant Random Subgroups. For a finitely generated group $\Gamma$, $\mathrm{Sub}(\Gamma)$ will denote the \emph{Chabauty space} of subgroups of $\Gamma$ (see for instance \cite{Gelander} for an introduction).

We will be interested in random index $n$ subgroups of such a group $\Gamma$. Given $n\in\mathbb{N}$, we will write 
$$
\mathcal{A}_n(\Gamma) = \{H<\Gamma;\; [\Gamma:H] = n\} \subset \mathrm{Sub}(\Gamma),
$$
so that $a_n(\Gamma) = |\mathcal{A}_n(\Gamma)|$. Studying a random index $n$ subgroup of $\Gamma$ comes down to understanding the measure $\mu_n$ on $\mathrm{Sub}(\Gamma)$, defined by
\[\mu_n = \frac{1}{a_n(\Gamma)} \sum_{H \in \mathcal{A}_n(\Gamma)} \delta_H\]
where $\delta_H$ denotes the Dirac mass on $H\in \mathrm{Sub}(\Gamma)$. 

$\mu_n$ is an example of what is called an \emph{Invariant Random Subgroup} (IRS) of $\Gamma$ -- i.e. a Borel probability measure on $\mathrm{Sub}(\Gamma)$ that is invariant under conjugation by $\Gamma$. We will write $\mathrm{IRS}(\Gamma)$ for the space of IRS's of $\Gamma$ endowed with the weak-* topology. This space has been first studied under this name in \cite{AbertGlasnerVirag} and \cite{Bowen} and under a different name in \cite{Vershik}.

We will also use a characterisation for convergence in $\mathrm{IRS}(\Gamma)$ terms of fixed points. This characterisation is probably well known, but we couldn't find the exact statement in the literature (for instance \cite[Lemma 16]{AbertGlasnerVirag} is very similar). We will provide a proof for the sake of completeness. 

Given a function $f:\mathrm{Sub}(\Gamma) \to \mathbb{C}$, we will write $\mu_n(f)$ for the integral of $f$ with respect to $\mu_n$ (all measures considered in our paper are finite sums of Dirac masses, so this is always well defined). Moreover, if $K\subset \Gamma$ is a conjugacy class then we will write 
$$
Z_K: \mathcal{A}_n(\Gamma)\to\mathbb{N}
$$
for the random variable that measures the number of conjugacy classes that $K$ splits into, i.e.
$$
Z_K(H) = | (K\cap H) / H |
$$
where $H$ acts on $K\cap H$ by conjugation. Note that if we fix any $g\in K$ and $\varphi:\Gamma \to S_n$ is a transitive homomorphism corresponding to $H$ (cf. Proposition \ref{subgp_trans}), then
$$
Z_K(H) = |\{ j\in \{1,\ldots,n\};\;\varphi(g)\cdot j = j\}|.
$$

\begin{lemma}\label{lem_IRSfixedpoints}
Let $\Gamma$ be a countable discrete group. Set
$$
\mu_n = \frac{1}{a_n(\Gamma)}  \sum_{\substack{ H\;<\; \Gamma \\ [\Gamma:H] \; = \; n}} \delta_H.
$$
and let $N\vartriangleleft \Gamma$. Then
$$
\mu_n \stackrel{n\to\infty}{\longrightarrow} \delta_{N} \; \text{in }\mathrm{IRS}(\Gamma) \quad \Leftrightarrow \quad \left\{ \begin{array}{ll}
 \mu_n(Z_K) \stackrel{n\to\infty}{=} o(n) & \forall \text{ conjugacy class } K \not\subset N \\
\mu_n(Z_K)\stackrel{n\to\infty}{\sim} n & \forall \text{ conjugacy class } K \subset N
\end{array}
\right.
$$
\end{lemma}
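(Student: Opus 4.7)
The plan is to reduce both directions of the equivalence to a single identity, namely that for every $g\in\Gamma$
$$
\mu_n(Z_{g^{\Gamma}}) \;=\; n\cdot \mu_n\bigl(\{H : g\in H\}\bigr),
$$
and to verify that weak-$*$ convergence $\mu_n\to\delta_N$ in $\mathrm{IRS}(\Gamma)$ is equivalent to the pointwise statement $\mu_n(\{H:g\in H\})\to \mathbf{1}_{g\in N}$ for every $g\in\Gamma$. Once both ingredients are in place the lemma follows by dividing the first identity by $n$ and using that normality of $N$ forces each conjugacy class to be entirely contained in $N$ or entirely disjoint from it.

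For the identity, I would pass from the uniform measure on $\mathcal{A}_n(\Gamma)$ to the uniform measure on transitive homomorphisms via Proposition \ref{subgp_trans}. If $\varphi_n\colon\Gamma\to S_n$ is uniform among transitive homomorphisms, then each $H\in\mathcal{A}_n(\Gamma)$ arises as $\varphi_n^{-1}(\mathrm{Stab}_{S_n}(1))$ for exactly $(n-1)!$ choices of $\varphi_n$ (one for each bijection of $\Gamma/H$ with $\{1,\ldots,n\}$ sending $H$ to $1$), so the random subgroup $H_n:=\varphi_n^{-1}(\mathrm{Stab}_{S_n}(1))$ is $\mu_n$-distributed. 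The law of $\varphi_n$ is invariant under conjugation by $S_n$, whence $\mathbb{P}[\varphi_n(g)\cdot i=i]$ is independent of $i\in\{1,\ldots,n\}$. Summing this over $i$, and using that $Z_{g^{\Gamma}}(H_n)$ equals the number of fixed points of $\varphi_n(g)$ (the observation recorded just before the lemma), yields the boxed identity.

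For the topological translation, I would use that the Chabauty topology on $\mathrm{Sub}(\Gamma)$ is the restriction of the product topology on $\{0,1\}^{\Gamma}$, so it has a subbase consisting of the clopen sets $\{H : g\in H\}$ and $\{H : g\notin H\}$ for $g\in\Gamma$, each of which is a continuity set for $\delta_N$. The ``only if'' direction is then immediate by testing $\mu_n\to\delta_N$ against the indicator of $\{H:g\in H\}$. For the ``if'' direction, a Bonferroni / union-bound argument shows that pointwise control of $\mu_n$ on these one-element cylinders upgrades to convergence on every finite cylinder
$$
\{H : g_1,\ldots,g_k\in H,\; g_{k+1},\ldots,g_l\notin H\};
$$
since such cylinders form a convergence-determining class of continuity sets for $\delta_N$, weak-$*$ convergence follows.

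Combining the two steps and using the normality of $N$, the right-hand side of the lemma is precisely the condition $\mu_n(\{H:g\in H\})\to \mathbf{1}_{g\in N}$ for every non-trivial $g\in\Gamma$, which is in turn equivalent to $\mu_n\to\delta_N$. The only step that requires real care is the Bonferroni upgrade in the topological translation; this is routine but must be written out so as to justify that pointwise convergence on singleton cylinders really determines convergence of all of $\mu_n$. Apart from this, the proof is a short bookkeeping argument around the fixed-point identity.
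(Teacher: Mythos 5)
Your proposal is correct and follows essentially the same route as the paper: the key fixed-point identity $\mu_n(Z_{g^\Gamma}) = n\cdot\mu_n(\{H : g\in H\})$ is derived from the same $(n-1)!$-to-$1$ correspondence (the paper averages $\mathbf{1}_{g\in\mathrm{Stab}_\varphi(p)}$ over $p$, you invoke $S_n$-conjugation invariance, which is the same computation), and the topological reduction to singleton cylinders is likewise the same idea, with the paper phrasing it via the Portmanteau criterion on the subbase $O_1(U)$, $O_2(V)$ while you phrase it via clopen cylinders in $\{0,1\}^\Gamma$ as a convergence-determining class. Both packagings are equivalent for a countable discrete group, and the Bonferroni step you flag is exactly the union-bound reduction the paper uses implicitly when passing from basic opens to subbasic ones.
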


\begin{proof}
We start with the fact that for $g\in K$, $\mu_n(\{H; g\in H\}) = \frac{1}{n}\mu_n(Z_K)$. Indeed, for any $p\in\{1,\ldots,n\}$, the map $\varphi\mapsto \mathrm{Stab}_\varphi\{p\}$ gives an $(n-1)!$-to-$1$ correspondence between transitive homomorphisms $\Gamma\to S_n$ and index $n$ subgroups of $\Gamma$. $Z_K(\varphi)$ equals the number of fixed points of $\varphi(g)$ on $\{1,\ldots,n\}$. As such
\begin{equation}
\mu_n(\{H; g\in H\})    = \frac{1}{n\cdot t_n(\Gamma)} \sum_{p=1}^n \sum_{\varphi \in \mathcal{T}_n(\Gamma)} \ind_{g\in \mathrm{Stab}_\varphi\{p\}}(\varphi)
 = \frac{1}{n}\mu_n(Z_K), \label{eq_mufixedpts}
\end{equation}
where
\[
\mathcal{T}_n(\Gamma) = \left\{ \varphi\in \Hom(\Gamma,S_n);\; \varphi(\Gamma) \curvearrowright \{1,\ldots,n\} \text{ transitively}\right\}.
\]

Now, the topology on $\mathrm{Sub}(\Gamma)$ is generated by sets of the form
$$
O_1(U) := \{ H\in \mathrm{Sub}(\Gamma); \; H\cap U \neq \emptyset \},\quad U \subset\Gamma
$$
and
$$
O_2(V) := \{ H\in \mathrm{Sub}(\Gamma); \; H\cap V = \emptyset \},\quad V \subset\Gamma \text{ finite},
$$
(see for instance \cite{Gelander}). By the Portmanteau theorem, convergence $\mu_n \stackrel{w^*}{\longrightarrow} \delta_N$ is equivalent to
\[
\liminf_{n\to\infty} \mu_n(O) \geq \delta_N(O)
\]
for every open set $O\subset \mathrm{Sub}(\Gamma)$. This is equivalent to proving that $\mu_n(O) \to 1$ for every open set $O\subset \mathrm{Sub}(\Gamma)$ such that $N\in O$. Since every open set is a union of sets of the form $O_1(U)$ and $O_2(V)$, $\mu_n \stackrel{w^*}{\longrightarrow} \delta_N$ if and only if
\begin{equation}\label{eq_portmanteau}
\mu_n(O_1(U)) \to 1 \text{ when }U\cap N \neq \emptyset \quad \text{and} \quad \mu_n(O_2(V)) \to 1 \text{ when }V\cap N = \emptyset
\end{equation}
for all $U\subset \Gamma$ and all finite $V\subset \Gamma$.

Let us first prove that our conditions on the behaviour of $\mu_n(Z_K)$ imply convergence in $\mathrm{IRS}(\Gamma)$. 

We start by checking \eqref{eq_portmanteau} for sets of the form $O_1(U)$. Suppose $g\in  U\cap N$. Using \eqref{eq_mufixedpts} and writing $K$ for the conjugacy class of $g$,
$$
\mu_n(\{H; H\cap U \neq \emptyset\}) \geq \mu_n(\{H; g\in H\})  = \frac{1}{n}\mu_n(Z_K) \to 1,
$$
by our assumption on $\mu_n(Z_K)$. 

Now we deal with sets of the form $O_2(V)$. We will write $K(g)$ for the conjugacy class of an element $g\in\Gamma$. \eqref{eq_mufixedpts} gives us
$$
\mu_n(\{H; H\cap V \neq \emptyset\}) \leq \frac{1}{n} \sum_{g\in V}\mu_n(Z_{K(g)}) \stackrel{n\to\infty}{\longrightarrow} 0,
$$
by our assumptions on $\mu_n(Z_{K(g)})$. This proves the first direction.

For the other direction, suppose $g\in N$ then $\delta_{N}(O_1(\{g\})) = 1$ and hence by \eqref{eq_portmanteau} and \eqref{eq_mufixedpts}, we obtain
$$
\liminf_{n\to\infty} \frac{1}{n}\mu_n(Z_K) = 1,
$$
which proves that $\mu_n(Z_K)\sim n$ as $n\to\infty$. Moreover, if $K$ is a conjugacy class such that $K\not\subset N$ and $g\in K$, then by \eqref{eq_portmanteau} and \eqref{eq_mufixedpts}, 
$$
\liminf_{n\to\infty} \mu_n(O_2(\{g\})) = \liminf_{n\to\infty} 1-\mu_n(O_1(\{g\})) = \liminf_{n\to \infty} 1-\frac{1}{n}\mu_n(Z_K) \geq 1,
$$
which proves that $\mu_n(Z_K) = o(n)$ as $n\to\infty$.
\end{proof}

\subsection{Benjamini--Schramm convergence}\label{sec_BS_convergence}

Now suppose that --- as many of the groups that we study do --- $\Gamma$ admits a finite simplicial complex $X$ as a classifying space. Picking a $0$-cell $x_0\in X$ gives an identification $\Gamma \simeq \pi_1(X,x_0)$. Moreover, an index $n$ subgroup $H<\Gamma$ gives rise to a pointed simplicial covering space
\[ (Y_H,y_H) \to (X,x_0).\]
This means that the measure $\mu_n$ above also gives rise to a probability measure $\nu_n$ on the set
$$
\mathcal{K}_D = \left\{ (Y,y_0); \begin{array}{c}
Y \text{ a connected simplicial complex in} \\
\text{which the degree of }0\text{-cells is at} \\
\text{most }D,\; y_0 \in Y \text{ a }0\text{-cell}
\end{array} \right\} \Bigg/ \sim 
$$ 
for some $D>0$, where two pairs $(Y,y_0)\sim (Y',y_0')$ if there is a simplicial isomorphism $Y\to Y'$ that maps $y_0$ to $y_0'$. This set $\mathcal{K}$ can be metrised by setting
$$
d_\mathcal{K}([Y,y_0],[Y',y_0']) = \frac{1}{1+\sup\left\{R\geq 0;\;\begin{array}{c} \text{The }R\text{-balls around }y_0\text{ and }y_0'\text{ are iso-}\\ \text{morphic as pointed simplicial complexes} \end{array}\right\}}.
$$

This allows us to speak of weak-* convergence of measures on $\mathcal{K}_D$. If there is a pointed simplicial complex $[Z,z_0]\in\mathcal{K}_D$ such that
\[\nu_n \stackrel{w^*}{\longrightarrow} \delta_{[Z,z_0]} \quad \text{as }n\to\infty,\] 
where $\delta_{[Z,z_0]}$ denotes the Dirac mass on $[Z,z_0]$, then we say that the random complex determined by $\nu_n$ \emph{Benjamini--Schramm converges} (or \emph{locally converges}) to $[Z,z_0]$.

We will write $\mathcal{BS}(\mathcal{K}_D)$ for the space of probability measures on $\mathcal{K}_D$ endowed with the weak-* topology. The procedure described above describes a continuous map
$$
\mathrm{IRS}(\Gamma) \to \mathcal{BS}(\mathcal{K}_D),
$$
for some $D>0$, that depends on the choice of classifying space.

\subsection{Betti numbers}

One reason for determining Benjamini--Schramm limits is that they help determine limits of normalised Betti numbers. We will exclusively be dealing with homology with real coefficients in this paper. Given a simplicial complex $X$, we will write
$$
b_k(X) = \dim(H_k(X;\mathbb{R})).
$$

In order to state Elek's result, let $[\beta_1(R),o_1],\ldots, [\beta_M(R),o_M]$ denote all the complexes in $\mathcal{K}_D$ that can appear as an $R$-ball of a complex in $\mathcal{K}_D$. Note that this is a finite list, the length of which depends on $R$ and $D$. Moreover, given a finite simplicial complex $X$ of which all $0$-cells degree at most $D$, we will write
$$
\rho_{\beta_i(R)}(X) = \frac{\left|\left\{x\in V(X);\; \text{The }R\text{-ball around }x\text{ is isomorphic to }\beta_i(R) \right\}\right|}{|V(X)|},\; i=1,\ldots,M
$$
where $V(X)$ denotes the set of $0$-cells of $X$. Elek's theorem now states:

\begin{theorem}[Elek {\cite[Lemma 6.1]{Elek}}]\label{thm_Elek}
Fix $D>0$ and let $(X_n)_n$ be a sequence of finite simplicial complexes in which the degree of every $0$-cell is bounded by $D$. If $|V(X_n)|\to\infty$ and for all $R>0$, for all $i$, $\rho_{\beta_i(R)}(X_n)$ converges as $n\to\infty$, then
$$
\lim_{n\to \infty}\frac{b_k(X_n)}{|V(X_n)|}
$$
exists for all $k\in\mathbb{N}$.
\end{theorem}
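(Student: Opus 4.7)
The plan is to pass to the combinatorial Laplacian on each chain group and then invoke L\"uck's determinant-approximation trick to control its kernel. For each fixed $k$ and each $n$, I would consider the combinatorial Laplacian $\Delta_k^{(n)} = \partial_{k+1}\partial_{k+1}^* + \partial_k^*\partial_k$ acting on $C_k(X_n;\mathbb{R})$ with the orthonormal basis of oriented $k$-simplices. Hodge theory identifies $b_k(X_n) = \dim \ker \Delta_k^{(n)}$. The degree bound $D$ ensures that the number of $k$-simplices is at most a constant times $|V(X_n)|$, that the matrix entries of $\Delta_k^{(n)}$ are integers of uniformly bounded absolute value, and that $\|\Delta_k^{(n)}\|\le M$ for some $M=M(D,k)$ independent of $n$.

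Next, I would define the normalised spectral measure $\mu_n$ on $[0,M]$ by $\mu_n(f) = \frac{1}{|V(X_n)|}\,\mathrm{tr}\bigl(f(\Delta_k^{(n)})\bigr)$, so that $b_k(X_n)/|V(X_n)| = \mu_n(\{0\})$. For a polynomial $P$ of degree $s$, $\mathrm{tr}(P(\Delta_k^{(n)}))$ is a sum over oriented $k$-simplices $\sigma$ of the diagonal entries $\langle e_\sigma, P(\Delta_k^{(n)}) e_\sigma\rangle$; each such entry is a combinatorial count depending only on the isomorphism type of the $R$-ball around $\sigma$ for some $R = R(s)$. Regrouping the sum by the isomorphism type of these neighbourhoods and using the hypothesis that $\rho_{\beta_i(R)}(X_n)$ converges for every $i$ and $R$ would yield that $\mu_n(P)$ converges for every polynomial $P$. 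By Weierstrass approximation on $[0,M]$, this upgrades to weak convergence of $\mu_n$ to a probability measure $\mu_\infty$.

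The final and hardest step is to promote weak convergence to convergence of the point mass at $0$, since the eigenvalues of $\Delta_k^{(n)}$ could a priori accumulate at $0$ without reaching it. Here I would invoke L\"uck's trick: because $\partial_k^{(n)}$ is integer-valued, the product of the non-zero eigenvalues of $\Delta_k^{(n)}$ is a positive integer and therefore at least $1$, so $\int_{(0,M]}\log\lambda\,d\mu_n(\lambda) \ge 0$. Since $\log\lambda\le\log M$ on $(0,M]$, this forces the uniform estimate $\mu_n\bigl((0,\varepsilon]\bigr) \le \log M/|\log\varepsilon|$ for small $\varepsilon$, a bound that also passes to $\mu_\infty$. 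Combined with weak convergence of $\mu_n$ at continuity points of $\mu_\infty$, this squeezes $\mu_n(\{0\})$ between $\mu_\infty([0,\varepsilon])$ and $\mu_\infty([0,\varepsilon]) - \log M/|\log\varepsilon|$; letting $\varepsilon\to 0$ gives $\mu_n(\{0\})\to\mu_\infty(\{0\})$, which is the desired convergence of normalised Betti numbers. The main obstacle will be precisely this continuity-at-zero step: weak convergence alone is insufficient, and the integrality of the boundary operators, not merely their boundedness, will be essential.
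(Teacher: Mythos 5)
The paper does not prove this statement; it is cited as \cite[Lemma 6.1]{Elek}, so there is no in-paper argument to compare against. Your proof is correct and is, up to presentation, exactly the Elek--L\"uck argument: weak convergence of the normalised spectral measures of the combinatorial Laplacians on a bounded interval (established via the locality of $\Delta_k$ and polynomial test functions), combined with the integrality-based bound $\mu_n\bigl((0,\varepsilon]\bigr) = O(1/|\log\varepsilon|)$ coming from $\prod_{\lambda>0}\lambda\ge 1$, which promotes weak convergence to convergence of the atom at $0$. Two small points worth tightening, both cosmetic: $\mu_n$ is not a probability measure --- its total mass is the ratio of $k$-simplices to $0$-cells, which is bounded and in fact converges under the hypothesis, but $\mu_\infty$ need not have mass one; and since the hypothesis controls frequencies of $R$-balls around $0$-cells rather than around $k$-simplices, the regrouping of $\mathrm{tr}\bigl(P(\Delta_k^{(n)})\bigr)$ is cleanest written as a sum over vertices, namely $\sum_v \sum_{\sigma\ni v}\frac{1}{k+1}\langle e_\sigma, P(\Delta_k^{(n)}) e_\sigma\rangle$, where the inner sum depends only on the isomorphism type of the $R(s)$-ball around $v$.
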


Often, an explicit limit for these normalised Betti numbers can be determined in terms of $\ell^2$-Betti numbers. We will not go into this theory very deeply in this paper and refer the interested reader to for instance \cite{Lueck_Book} or \cite{Kammeyer} for more information. 

If $\Gamma$ is a group and $X$ is a finite $\Gamma$-CW complex, then we will write $b^{(2)}_k(X;\Gamma)$ for the $k$\ts{th} $\ell^2$-Betti number of the pair $(X,\Gamma)$.

We will rely on the L\"uck approximation theorem \cite{Lueck} (see also \cite[Theorem 5.26]{Kammeyer}). If $\Gamma$ is a group and $\Gamma_i\vartriangleleft \Gamma$, $i\in \mathbb{N}$ are such that
$$
[\Gamma:\Gamma_i] < \infty \quad \text{and} \quad \Gamma_{i+1} < \Gamma_i, \; i\in\mathbb{N},
$$
then we call $(\Gamma_i)_i$ a \emph{chain of finite index normal subgroups of }$\Gamma$.

\begin{theorem}[L\"uck approximation theorem] \label{thm_Lueck}
Let $\Gamma$ be a group and $X$ be a finite free $\Gamma$-CW complex. Moreover, let $(\Gamma_i)_i$ be a chain of finite index normal subgroups of $\Gamma$ and set
$$
\Theta = \bigcap_{i\in\mathbb{N}}\Gamma_i.
$$
Then
$$
\lim_{i\to\infty} \frac{b_k(X/\Gamma_i)}{[\Gamma:\Gamma_i]} = b^{(2)}(\Theta\backslash X;\; \Gamma/\Theta).
$$
\end{theorem}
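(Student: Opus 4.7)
The plan is to realise both sides as the spectral mass at $0$ of a sequence of combinatorial Laplacians with matching moments, and then to rule out eigenvalue accumulation at $0$ using an integrality argument. Let $c_k$ be the number of $\Gamma$-orbits of $k$-cells of $X$. The real cellular chain complex of $X/\Gamma_i$ is canonically $\mathbb{R}[\Gamma/\Gamma_i]^{c_k}$ in degree $k$, so the combinatorial Laplacian $\Delta_k^{(i)} = \partial^*\partial + \partial\partial^*$ is a self-adjoint integer matrix of size $c_k[\Gamma:\Gamma_i]$ obtained by applying the quotient map $\mathbb{Z}[\Gamma/\Theta]\to \mathbb{Z}[\Gamma/\Gamma_i]$ entrywise to a fixed matrix $\Delta_k^{(\infty)}$ over $\mathbb{Z}[\Gamma/\Theta]$. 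Write $\nu_i$ for the normalised empirical spectral measure of $\Delta_k^{(i)}$, and $\nu_\infty$ for the spectral measure of $\Delta_k^{(\infty)}$ with respect to the canonical trace on the group von Neumann algebra of $\Gamma/\Theta$. Then $b_k(X/\Gamma_i)/[\Gamma:\Gamma_i] = c_k\nu_i(\{0\})$ and $b_k^{(2)}(\Theta\backslash X;\Gamma/\Theta) = c_k\nu_\infty(\{0\})$, so the theorem reduces to showing $\nu_i(\{0\})\to \nu_\infty(\{0\})$.

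I would first establish weak convergence $\nu_i \to \nu_\infty$ by the method of moments. Expanding the normalised trace $(c_k[\Gamma:\Gamma_i])^{-1}\mathrm{tr}((\Delta_k^{(i)})^p)$ combinatorially as a sum over closed length-$p$ lattice walks on the cells of $X$, only finitely many group elements in $\Gamma/\Theta$ contribute to the $p$-th moment; since $\Theta=\bigcap_i \Gamma_i$, for $i$ large the quotient $\Gamma/\Theta\to\Gamma/\Gamma_i$ is injective on that finite set, so the $p$-th moment of $\nu_i$ stabilises to the $p$-th moment of $\nu_\infty$. Because all measures are supported in $[0,C]$ for a uniform constant $C$ depending only on $X$, moment convergence upgrades to weak convergence.

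The main obstacle, and the heart of L\"uck's argument, is that weak convergence alone does not control the atom at $0$: small positive eigenvalues of $\Delta_k^{(i)}$ might accumulate there and perturb $\nu_i(\{0\})$. The remedy is a uniform ``no small eigenvalues'' estimate coming from integrality. Since $\mathrm{det}'\Delta_k^{(i)}$, the product of the nonzero eigenvalues of the integer self-adjoint matrix $\Delta_k^{(i)}$, is a positive integer, one has
\begin{equation*}
\int_{0+}^{C}\log\lambda\, d\nu_i(\lambda) \;=\; \frac{1}{c_k[\Gamma:\Gamma_i]}\log\mathrm{det}'\Delta_k^{(i)}\;\geq\;0,
\end{equation*}
and splitting this integral at $\varepsilon\in(0,1)$ yields the uniform bound $\nu_i((0,\varepsilon])\leq \log C/\log(1/\varepsilon)$, which tends to $0$ as $\varepsilon\to 0$ independently of $i$. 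Combining this with weak convergence at continuity points $\varepsilon$ of $\nu_\infty$,
\begin{equation*}
\nu_\infty([0,\varepsilon]) - \tfrac{\log C}{\log(1/\varepsilon)} \;\leq\; \liminf_i \nu_i(\{0\}) \;\leq\; \limsup_i \nu_i(\{0\}) \;\leq\; \nu_\infty([0,\varepsilon]),
\end{equation*}
and letting $\varepsilon\to 0$, using continuity from above of $\nu_\infty$, forces both extremes to equal $\nu_\infty(\{0\})$, completing the proof.
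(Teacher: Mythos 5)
The paper invokes this result without proof, citing L\"uck \cite{Lueck} and Kammeyer \cite[Theorem 5.26]{Kammeyer}, so there is no in-paper argument to compare against. Your proposal is a correct and essentially faithful reconstruction of L\"uck's original argument: you identify both sides with the mass at $0$ of spectral measures of combinatorial Laplacians, obtain weak convergence of the empirical spectral measures by showing each fixed moment eventually stabilises (using that $\Theta=\bigcap_i\Gamma_i$ forces the quotient maps $\Gamma/\Theta\to\Gamma/\Gamma_i$ to be eventually injective on the finite support of $(\Delta_k^{(\infty)})^p$, together with uniform compact support of the $\nu_i$), and then use the integrality of the reduced determinant of the integer self-adjoint matrices $\Delta_k^{(i)}$ to get the uniform bound $\nu_i((0,\varepsilon])\leq \log C/\log(1/\varepsilon)$, which prevents eigenvalue leakage into $0$. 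The final sandwich is handled cleanly: you do not need a separate bound on $\nu_\infty((0,\varepsilon])$, since continuity from above already gives $\nu_\infty([0,\varepsilon])\to\nu_\infty(\{0\})$, and the portmanteau inequalities supply the right one-sided estimates at (or near) continuity points. I see no gap.
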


In order to prove convergence of Betti numbers we are after (Theorem \ref{thm_main3}(c)), we will use the approximation theorems of Elek and L\"uck to deduce the following lemma. Like Lemma \ref{lem_IRSfixedpoints}, this lemma is probably well known but, as far as we know, not available in the literature in this form, so we will provide a proof.

\begin{lemma}\label{lem_betticonvergence} Let $\Gamma$ be a group that admits a finite simplicial complex $X$ as a classifying space. Set
$$
\mu_n = \sum_{\substack{H\;< \; \Gamma  \\ [\Gamma:H]\;=\;n}} \delta_H.
$$
If there exists a normal subgroup $N\vartriangleleft \Gamma $ such that $\Gamma/N$ is residually finite and
$$
\mu_n \stackrel{n\to\infty}{\longrightarrow} \delta_N
$$
in $\mathrm{IRS}(\Gamma)$. Then for every $\varepsilon>0$ and every $k\in\mathbb{N}$,
$$
\mu_n\left( \left|\frac{b_k(H)}{n} - b_k^{(2)}(N\backslash \widetilde{X}; \Gamma/N)\right| < \varepsilon \right) \stackrel{n\to\infty}{\longrightarrow} 1,
$$
where $\widetilde{X}$ denotes the universal cover of $X$.
\end{lemma}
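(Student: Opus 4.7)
The plan is to combine the approximation theorems of Elek (Theorem \ref{thm_Elek}) and L\"uck (Theorem \ref{thm_Lueck}) via the continuous map $\mathrm{IRS}(\Gamma)\to\mathcal{BS}(\mathcal{K}_D)$ from Section \ref{sec_BS_convergence}. This map sends $\mu_n$ to a probability measure $\nu_n$ on pointed covers of $X$, and the hypothesis $\mu_n\to\delta_N$ translates to $\nu_n\to\delta_{[N\backslash\widetilde{X},\,\ast]}$ in $\mathcal{BS}(\mathcal{K}_D)$. Concretely, for every $R>0$ and every isomorphism type $\beta_i(R)$ of a pointed $R$-ball, the random local frequency $\rho_{\beta_i(R)}(Y_H)$ concentrates, as $n\to\infty$, around the corresponding frequency in $N\backslash\widetilde{X}$.

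Next, I would build a deterministic test sequence with the same Benjamini--Schramm limit. Since $\Gamma/N$ is residually finite, one can choose a chain of finite-index normal subgroups $N_1\supset N_2\supset\cdots$ of $\Gamma$, each containing $N$, such that $\bigcap_i N_i = N$ (take preimages of a residual chain in $\Gamma/N$). A direct check shows that the covers $X_i := X/N_i$ Benjamini--Schramm converge to $[N\backslash\widetilde{X},\,\ast]$: once $i$ is large enough that the projection $\Gamma\to\Gamma/N_i$ is injective on the finite set of elements needed to read off an $R$-ball, every $R$-ball in $X_i$ agrees with the corresponding $R$-ball in $N\backslash\widetilde{X}$. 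Theorem \ref{thm_Elek} then gives that $b_k(X_i)/[\Gamma:N_i]$ converges, and Theorem \ref{thm_Lueck}, applied to the chain $(N_i)$, identifies the limit as $b_k^{(2)}(N\backslash\widetilde{X};\Gamma/N)$.

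The final step is to transfer this deterministic identification to the random setting. The key point, which is essentially the content of the proof of Theorem \ref{thm_Elek}, is that for any $\varepsilon>0$ and any degree bound $D$ there exist $R=R(\varepsilon,D)$ and a function $F_{\varepsilon,R}$ of finitely many of the local frequencies $\rho_{\beta_i(R)}(Y)$ such that
\[
\left|\tfrac{b_k(Y)}{|V(Y)|} - F_{\varepsilon,R}\!\left(\rho_{\beta_1(R)}(Y),\ldots,\rho_{\beta_{M}(R)}(Y)\right)\right| < \varepsilon
\]
uniformly over finite simplicial complexes $Y$ with vertex degrees at most $D$. Applying this to both the deterministic sequence $X_i$ and to the random covers $Y_H$, and combining it with the concentration of $\rho_{\beta_i(R)}(Y_H)$ around the same limiting frequencies as for $X_i$, yields the desired conclusion: for every $\varepsilon>0$, $\mu_n(|b_k(H)/n - b_k^{(2)}(N\backslash\widetilde{X};\Gamma/N)|<\varepsilon)\to 1$.

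I expect the main obstacle to be precisely this last step: extracting a uniform, quantitative version of Elek's theorem that legitimises the transfer from a deterministic chain to a random sequence. It is implicit in Elek's argument that the limit depends only on the limiting local frequencies (which is why any two sequences with the same Benjamini--Schramm limit produce the same limit of normalised Betti numbers), but some bookkeeping is needed to turn this into the uniform approximation that couples with a probabilistic concentration statement. A clean alternative would be to mimic the framework of \cite{7sam,ABBG} directly in our simplicial setting, where the analogous continuity of normalised Betti numbers with respect to Benjamini--Schramm limits is established.
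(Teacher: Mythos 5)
Your proposal follows essentially the same route as the paper's proof: translate IRS convergence into concentration of local frequencies, build a deterministic chain $(N_i)$ using residual finiteness of $\Gamma/N$ to apply L\"uck's theorem, and couple the random sequence to this chain via a quantitative reading of Elek's theorem. The "uniform Elek" step you flag as the main obstacle is indeed the subtle point; the paper handles it exactly the way you envisage, by extracting from Theorem~\ref{thm_Elek} that the normalised Betti number of any complex whose $R$-ball frequencies are $\delta$-close to those of the limit must be $\varepsilon$-close to the limiting value (which, as you note, is implicit in Elek's argument rather than literally the statement of Theorem~\ref{thm_Elek}), after first establishing the required concentration of $R$-ball frequencies via Lemma~\ref{lem_IRSfixedpoints} and the finiteness of conjugacy classes with bounded translation length on $\widetilde X$.
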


\begin{proof}
Recall that $V(X)$ denotes the set of $0$-cells of $X$ and write $D$ for the maximal degree among these $0$-cells. Fix a choice of $0$-cell $x_0\in V(X)$, to obtain an identification $\Gamma\simeq \pi_1(X,x_0)$ and denote the measure on $\mathcal{K}_D$ induced by $\mu_n$ by $\nu_n \in \mathcal{BS}(\mathcal{K}_D)$. Finally, we will let $(Z,z_0)\to (X,x_0)$ denote the pointed cover corresponding to $N$.

For $g\in K\subset \Gamma$, where $K$ is a conjugacy class, $Z_K(H)$ equals the number of lifts of $x_0$ at which the loop in $X$ corresponding to $g$ lifts to a closed loop. 

Now consider the set $W_R$ of all $g\in \Gamma$ that have translation distance at most $R$ on the universal cover $\widetilde{X}$. This set consists of a finite number of conjugacy classes.

If $H<\Gamma$ is such that $[\Gamma:H] = n$ and, as $n\to\infty$,
\begin{equation}\label{eq_fparelittle-o-n}
\left\{\begin{array}{ll}
 Z_K(H) = o(n) & \text{if } K\not\subset N \cap W_R \\
 n-Z_K(H) = o(n) & \text{if } K\subset N \cap W_R
 \end{array}
\right.
\end{equation} 
then the number of lifts $y$ in the cover of $X$ corresponding to $H$, around which the $R$-ball $B_R(y)$ is not isometric to the $R$-ball $B_R(z_0)$ around $z_0\in Z$ is $o(n)$ (this uses that $W_R$ consists of finitely many conjugacy classes). 

Lemma \ref{lem_IRSfixedpoints} tells us that for any finite set of conjugacy classes, \eqref{eq_fparelittle-o-n} is satisfied with asymptotic $\mu_n$-probability $1$. So we obtain that for every $R, \varepsilon > 0$ 
$$
\nu_n\Big( \left\{[Y,y];\; \frac{|\{v\in V(Y)\text{ a lift of }x_0;\;B_R(v) \simeq B_R(z_0) \}|}{n}  > 1- \varepsilon \right\}  \Big) \stackrel{n\to\infty}{\longrightarrow} 1.
$$
Now, since $V(X)$ is finite we can repeat the argument finitely many times and obtain that for each $R>0$ there is a finite list $B_1,\ldots,B_L$ of finite simplicial complexes and a finite list of densities $\rho_1,\ldots,\rho_L >0$ such that 
$$
\nu_n\Big( \left\{[Y,y];\;\forall i: \; \left|\frac{|\{v\in V(Y);\;B_R(v) \simeq B_i \}|}{n} -\rho_i \right|  <  \varepsilon \right\}  \Big) \stackrel{n\to\infty}{\longrightarrow} 1.
$$
So, by Theorem \ref{thm_Elek}, for every $\varepsilon>0$ there exists a $\delta>0$ such that if we fix any finite pointed complex $[Q,q]\in\mathcal{K}_D$ that satisfies $ \left|\frac{|\{v\in V(Q);\;B_R(v) \simeq B_i \}|}{n} -\rho_i \right|  <  \delta $ for $i=1,\ldots,L$, then,
\begin{equation}\label{eq_betticlose}
\nu_n\Big( \left\{[Y,y];\;\forall i: \; \left|\frac{b_k(Y)}{n}  - \frac{b_k(Q)}{|V(Q)|} \right|  <  \varepsilon \right\}  \Big) \stackrel{n\to\infty}{\longrightarrow} 1 \quad \text{for all }k\in\mathbb{N}.
\end{equation}
Using the fact that $\Gamma/N$ is residually finite, we can find a chain of normal subgroups $H_i \vartriangleleft \Gamma/N$ of finite index such that $\cap_i H_i = \{e\}$. We lift this sequence of subgroups to a sequence $\widetilde{H}_i \vartriangleleft \Gamma$ and obtain a sequence of pointed covers $(Q_i,q_i) \to (X,x_0)$. Now, if we set
$$
\eta_i = \frac{1}{[\Gamma:\widetilde{H}_i]} \sum_{u \in (\Gamma/\widetilde{H}_i)\cdot q_i} \delta_{[Q_i,u]} \in \mathcal{BS}(\mathcal{K}_D),
$$
then $\eta_i \stackrel{i\to\infty}{\longrightarrow} \delta_N$ by construction. So, for \eqref{eq_betticlose}, we can take a $(Q_i,q_i)$ deep in the sequence we just constructed. Moreover, by Theorem \ref{thm_Lueck} we have
$$
\frac{b_k(Q_i)}{n} \approx  b_k^{(2)}(N\backslash \widetilde{X}; \Gamma/N),
$$
which finishes the proof.
\end{proof}

\section{Subgroup growth}\label{sec_subgroupgrowth}

Our first objective is to prove Theorem \ref{thm_main1} -- the asymptotic subgroup growth of our groups $\Gamma_{p_1,\ldots,p_m}$. Note that this follows from the following theorem together with Theorems \ref{hn_cyclic} and \ref{thm_muller-transitive} and  Lemma \ref{subgp_trans}.

\begin{theorem}\label{thm_subgroup_growth} Let $p_1,\ldots,p_m \in \mathbb{N}_{>1}$ such that $\sum_{i=1}^m \frac{1}{p_i} < m-1$. Then, as $n\to\infty$,
	$$a_n (\Gamma_{p_1, \cdots, p_m}) \sim a_n(C_{p_1}\ast \cdots \ast C_{p_m}).$$
\end{theorem}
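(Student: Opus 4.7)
\bigskip
\noindent\textbf{Proof proposal.}

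The plan is to sandwich $a_n(\Gamma_{p_1,\ldots,p_m})$ between the quantity $a_n(Q)$, where $Q := C_{p_1}*\cdots * C_{p_m}$, and an upper bound obtained from Proposition \ref{prop_subgp_quotient}, and then to show that the two match asymptotically.

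For the lower bound, I would use the central extension \eqref{eq_central_ext}. Since $\Phi_{p_1,\ldots,p_m}\colon \Gamma_{p_1,\ldots,p_m}\to Q$ is surjective, the pullback map $H\mapsto \Phi_{p_1,\ldots,p_m}^{-1}(H)$ is an injection from the set of index $n$ subgroups of $Q$ into the set of index $n$ subgroups of $\Gamma_{p_1,\ldots,p_m}$ (its image consists exactly of those subgroups containing the kernel). This immediately gives
\[
a_n(\Gamma_{p_1,\ldots,p_m})\;\geq\; a_n(Q).
\]

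For the upper bound, I would apply Proposition \ref{prop_subgp_quotient} with $N=\ker \Phi_{p_1,\ldots,p_m}\cong \mathbb{Z}$. Since $a_t(\mathbb{Z})=1$ for all $t$ and the minimal number of generators of $Q$ equals $m$, one obtains
\[
a_n(\Gamma_{p_1,\ldots,p_m})\;\leq\;\sum_{t\mid n} a_{n/t}(Q)\,t^{m}.
\]
So the theorem reduces to showing that the $t=1$ term dominates, i.e.
\[
\sum_{\substack{t\mid n\\ t\geq 2}} a_{n/t}(Q)\,t^{m} \;=\; o\!\big(a_n(Q)\big)\quad\text{as }n\to\infty.
\]

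The main step is therefore this last estimate, and for it I would combine Theorem \ref{hn_cyclic}, Theorem \ref{thm_muller-transitive} and Proposition \ref{subgp_trans} to extract the crude but sufficient asymptotic
\[
\log a_k(Q) \;=\; c\,k\log k + O(k),\qquad c:=m-1-\sum_{i=1}^m \tfrac{1}{p_i}\;>\;0,
\]
where positivity of $c$ is exactly the hypothesis $\sum 1/p_i < m-1$. Setting $s=n/t$, the logarithm of a generic term in the tail sum equals
\[
cs\log s + O(s) + m\log(n/s),\qquad 1\leq s\leq n/2.
\]
Since $s\mapsto cs\log s$ is increasing, this is maximised at $s=n/2$, giving
\[
\log\!\Big(\max_{t\geq 2}\,a_{n/t}(Q)\,t^{m}\Big) \;\leq\; \tfrac{c}{2}\,n\log n + O(n),
\]
whereas $\log a_n(Q) = c\,n\log n + O(n)$. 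Hence the ratio of the maximum tail term to $a_n(Q)$ decays like $\exp(-\tfrac{c}{2}n\log n + O(n))$, which easily dominates the trivial bound $d(n)\leq n$ on the number of divisors. This yields $\sum_{t\geq 2} a_{n/t}(Q)t^m = o(a_n(Q))$ and closes the sandwich.

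The only delicate point I anticipate is making sure the rank input to Proposition \ref{prop_subgp_quotient} is indeed $m$ (so that the polynomial factor $t^{m}$ is harmless against the super-exponential growth of $a_n(Q)$), and that the asymptotic $c k\log k + O(k)$ is valid uniformly enough in $k$ to handle every divisor in the sum simultaneously; both follow directly from the results already quoted in Section \ref{sec_prelim_subgroup_growth}.
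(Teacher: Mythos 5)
Your proof is correct and follows essentially the same route as the paper's: the sandwich $a_n(Q)\leq a_n(\Gamma_{p_1,\ldots,p_m})\leq\sum_{t\mid n}a_{n/t}(Q)t^m$ via Proposition \ref{prop_subgp_quotient}, followed by bounding the tail $t\geq 2$ by the number of divisors times the maximal term and then killing that using the superexponential growth $\log a_k(Q)=ck\log k+O(k)$ coming from Theorems \ref{hn_cyclic} and \ref{thm_muller-transitive} together with Proposition \ref{subgp_trans}. The paper just works with the explicit asymptotic rather than its logarithm, but the estimate is the same.
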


\begin{proof}
	We have already mentioned that the group $\Gamma_{p_1, \cdots, p_m}$ is a central extension
	\[ 1\longrightarrow \mathbb{Z}  \longrightarrow \Gamma_{p_1,\ldots,p_m} \stackrel{\Phi_{p_1,\ldots,p_m}}{\longrightarrow} C_{p_1} * \cdots * C_{p_m} \longrightarrow 1\]
	where $\Phi_{p_1,\ldots,p_m}$ sends $x_j$ to the generator of $C_{p_j}$. 
	Using this alongside Proposition \ref{prop_subgp_quotient}, implies that
	
	\begin{multline*}
	 1 \leq 
	\frac{a_n (\Gamma_{p_1, \cdots, p_m})}{a_n(C_{p_1}\ast \cdots \ast C_{p_m})} \leq 
	\frac{\sum_{r|n} a_{n/r}(C_{p_1}\ast \cdots \ast C_{p_m})r^{m}}{a_n(C_{p_1}\ast \cdots \ast C_{p_m})} \\ = 
	 1 + \frac{\sum_{\substack{r|n \\ r>1}} a_{n/r}(C_{p_1}\ast \cdots \ast C_{p_m})r^{m}}{a_n(C_{p_1}\ast \cdots \ast C_{p_m})} 
	 \end{multline*}
	
	Take $r_n$ to be the divisor of $n$ with $1 < r_n \leq n$, that maximises $a_{n/r}(C_{p_1}\ast \cdots \ast C_{p_m})d^{m}$. Since the number of divisors of $n$ is certainly bounded above by $n$, we see 
	$$\frac{\sum_{\substack{r|n \\ r>1}} a_{n/r}(C_{p_1}\ast \cdots \ast C_{p_m})r^{m}}{a_n(C_{p_1}\ast \cdots \ast C_{p_m})} \leq
	n \frac{a_{n/r_n}(C_{p_1}\ast \cdots \ast C_{p_m})r_n^{m}}{a_n(C_{p_1}\ast \cdots \ast C_{p_m})}.$$
	
	Using the growth rates of $a_n(C_{p_1}\ast \cdots \ast C_{p_m})$ which are obtained from Theorem \ref{hn_cyclic} and Theorem \ref{thm_muller-transitive}, we get

	\begin{multline*}
	\lim_{n\to\infty}\left(nr_n^{m}\frac{a_{n/r_n}(C_{p_1}\ast \cdots \ast C_{p_m})}{a_n(C_{p_1}\ast \cdots \ast C_{p_m})}\right) \\
	=
	\lim_{n\to\infty}\left(
	\exp \left( \sum_{i=1}^{m} \sum_{\substack{d|p_i\\ d < p_i}} \frac{1}{d} \left( \left( \frac{n}{r_n}\right)^{\frac{d}{p_i}} - n^{\frac{d}{p_i}} \right) \right)
	n\left(\frac{(n/r_n)!}{n!}\right)^{\sum\limits_{i=1}^{m}\alpha_{p_1,\cdots, p_m}}
	r_n^{\beta_{p_1,\cdots, p_m}}
	\right) \\
	\leq \lim_{n\to\infty} \left(
	n\left(\frac{(n/r_n)!}{n!}\right)^{\alpha_{p_1,\cdots, p_m}}
	r_n^{\beta_{p_1,\cdots, p_m}}
	\right)\\
	\leq \lim_{n\to\infty} \left(
	\left(\frac{(n/2)!}{n!}\right)^{\alpha_{p_1,\cdots, p_m}}
	n^{1+\beta_{p_1,\cdots, p_m}}
	\right)	= 0,
	\end{multline*}
	where we have used $2\leq r_n \leq n$, and denoted $\alpha_{p_1,\cdots, p_m} = \sum_{i=1}^{m}(1 - \frac{1}{p_i}) -1$ and $\beta_{p_1,\cdots, p_m} = m-1 + \frac{1}{2}(\sum_{i=1}^{m}(1 - \frac{1}{p_i}))$.
	
	Hence we have shown
	$$ {a_n (\Gamma_{p_1, \cdots, p_m})} \sim {a_n(C_{p_1}\ast \cdots \ast C_{p_m})},$$
as $n\to\infty$.
\end{proof}

As an immediate consequence of this alongside Lemma \ref{subgp_trans}, we also find that 
\[
t_n(C_{p_1} \ast \cdots \ast C_{p_m}) \sim t_n(\Gamma_{p_1,\ldots, p_m})\]
as $n\to\infty$. Moreover, we obtain:
\begin{corollary}\label{cor_hn_tn}  Let $p_1,\ldots,p_m \in \mathbb{N}_{>1}$ such that $\sum_{i=1}^m \frac{1}{p_i} < m-1$. Then
\[
h_n(\Gamma_{p_1,\ldots,p_m}) \sim t_n(\Gamma_{p_1,\ldots,p_m})
\]
as $n\to\infty$.
\end{corollary}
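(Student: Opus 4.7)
The plan is to exploit the standard recurrence
\[
h_n(\Gamma_{p_1,\ldots,p_m}) \;=\; \sum_{k=1}^n \binom{n-1}{k-1}\, t_k(\Gamma_{p_1,\ldots,p_m})\, h_{n-k}(\Gamma_{p_1,\ldots,p_m}),
\]
obtained by conditioning $\rho \in \Hom(\Gamma_{p_1,\ldots,p_m},S_n)$ on the orbit of the point $1 \in \{1,\ldots,n\}$ (with the convention $h_0 := 1$): one chooses the $k-1$ other points of that orbit in $\binom{n-1}{k-1}$ ways, puts a transitive action on the orbit ($t_k$ choices) and an arbitrary action on the complement ($h_{n-k}$ choices). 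Since the $k=n$ term equals $t_n(\Gamma_{p_1,\ldots,p_m})$, Corollary \ref{cor_hn_tn} is equivalent to
\[
\sum_{k=1}^{n-1} \binom{n-1}{k-1}\, t_k(\Gamma_{p_1,\ldots,p_m})\, h_{n-k}(\Gamma_{p_1,\ldots,p_m}) \;=\; o\bigl(t_n(\Gamma_{p_1,\ldots,p_m})\bigr).
\]

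Write $G = C_{p_1}*\cdots*C_{p_m}$. The same recurrence applied to $G$, combined with Theorem \ref{thm_muller-transitive}, gives
\[
\sum_{k=1}^{n-1} \binom{n-1}{k-1}\, t_k(G)\, h_{n-k}(G) \;=\; h_n(G) - t_n(G) \;=\; o(t_n(G)),
\]
from which (using $h_{n-k}(G) \geq t_{n-k}(G)$) one obtains the analogous bound with $h_{n-k}(G)$ replaced by $t_{n-k}(G)$. Theorem \ref{thm_subgroup_growth} and Proposition \ref{subgp_trans} yield $t_k(\Gamma_{p_1,\ldots,p_m}) \sim t_k(G)$, so these two positive sequences are bounded multiples of each other, which allows the bound to be transferred:
\[
\varepsilon_n \;:=\; \frac{1}{t_n(\Gamma_{p_1,\ldots,p_m})} \sum_{k=1}^{n-1} \binom{n-1}{k-1}\, t_k(\Gamma_{p_1,\ldots,p_m})\, t_{n-k}(\Gamma_{p_1,\ldots,p_m}) \;\longrightarrow\; 0.
\]

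To handle the factor $h_{n-k}(\Gamma_{p_1,\ldots,p_m})$ in the tail, my plan is to establish by induction on $n$ a uniform bound $h_n(\Gamma_{p_1,\ldots,p_m}) \leq D\, t_n(\Gamma_{p_1,\ldots,p_m})$ for some constant $D$. Plugging the inductive hypothesis into the recurrence produces
\[
h_n(\Gamma_{p_1,\ldots,p_m}) \;\leq\; t_n(\Gamma_{p_1,\ldots,p_m})\bigl(1 + D\varepsilon_n\bigr),
\]
which closes the induction with, say, $D=2$ once $\varepsilon_n < 1/2$ (the finitely many earlier values are absorbed by enlarging $D$). Feeding this uniform bound back into the recurrence and using $\varepsilon_n \to 0$ then yields $h_n(\Gamma_{p_1,\ldots,p_m}) \leq t_n(\Gamma_{p_1,\ldots,p_m})(1+o(1))$, which together with the trivial $h_n \geq t_n$ proves the corollary. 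The only mild obstacle is this bootstrap step; morally, the corollary just transports Theorem \ref{thm_muller-transitive} from $G$ to $\Gamma_{p_1,\ldots,p_m}$ along Theorem \ref{thm_subgroup_growth}.
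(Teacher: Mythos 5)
Your proof is correct and reaches the key estimate by a genuinely different route. Both arguments rest on the same orbit-conditioning recurrence (the paper states it for $a_n$ via Proposition~\ref{subgp_trans}, but after clearing factorials it is the identity you wrote) and on an induction that propagates a comparison between $h_n$ and $t_n$. The difference is in how the off-diagonal sum is controlled. The paper applies the standalone analytic Lemma~\ref{lem_rapidgrowth} to $a_n(\Gamma)$, yielding the quantitative bound $\sum_{k<n} a_k(\Gamma)\,a_{n-k}(\Gamma) = O(n^{-\chi})\,a_n(\Gamma)$, and then runs a refined induction with hypothesis $h_n/(n-1)! \leq (1 + C n^{-\alpha})\,a_n$. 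You instead observe that Theorem~\ref{thm_muller-transitive} already encodes the off-diagonal estimate for $G = C_{p_1}*\cdots*C_{p_m}$ --- since $h_n(G) - t_n(G)$ \emph{is} the tail sum with $h_{n-k}(G) \geq t_{n-k}(G)$ --- and then transfer it to $\Gamma$ via $t_k(\Gamma)\sim t_k(G)$, which is a restatement of Theorem~\ref{thm_subgroup_growth}. This buys you independence from Lemma~\ref{lem_rapidgrowth} (a more economical, more self-contained derivation given what has already been established), at the cost of producing only $\varepsilon_n\to 0$ rather than a polynomial rate; the corollary only needs the former. One small point: the transfer step as phrased quietly uses that the positive sequences $t_k(\Gamma)$ and $t_k(G)$ are comparable for \emph{every} $k$, not only asymptotically. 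To make this airtight one should either note $t_k(G)>0$ for all $k$ (true for these free products of cyclics) or split off the boundedly many extreme indices $k$ and $n-k$ and kill them directly using the superexponential growth of $t_n(\Gamma)$ --- a routine tidy-up rather than a gap.
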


Before we prove this corollary, we note that together with Lemma \ref{subgp_trans} and Theorem \ref{thm_main1}, it implies Theorem \ref{thm_main2}.

\begin{proof}
Write $\Gamma=\Gamma_{p_1,\ldots,p_m}$ and $\chi = m-1-\sum_{i=1}^m \frac{1}{p_i}$. Fix any $\alpha<\chi$. Our goal will be to prove that there exists a constant $C>0$ such that
\[
\frac{h_n(\Gamma)}{(n-1)!} \leq \left(1 + \frac{C}{n^\alpha}\right) \cdot a_n(\Gamma)
\]
for all $n\geq 1$. Observe that this, combined with the fact that $\frac{h_n(\Gamma)}{(n-1)!}\geq a_n(\Gamma)$ and Lemma \ref{subgp_trans}, is sufficient to prove the corollary.

We will prove our claim by induction. Let us first consider the induction step. We will use the fact (see \cite{LubotzkySegal} Corollary 1.1.4) that
\[
\frac{h_n(\Gamma)}{(n-1)!} = 
a_n(\Gamma) +
\sum_{k=1}^{n-1} \frac{h_{n-k}(\Gamma)}{(n-k)!}\;  a_k(\Gamma).
\]
Using the induction hypothesis, we get
\begin{multline*}
\frac{h_n(\Gamma)}{(n-1)!} \leq
a_n(\Gamma) +
\sum_{k=1}^{n-1} \left(1 + \frac{C}{(n-k)^\alpha}\right)\; a_{n-k}(\Gamma)\;  a_k(\Gamma)  \\
\leq a_n(\Gamma) + (1+C)\cdot \sum_{k=1}^{n-1} a_{n-k}(\Gamma)\;  a_k(\Gamma).
\end{multline*}
Theorem \ref{thm_subgroup_growth}, together with Lemma \ref{lem_rapidgrowth},
\[
\frac{h_n(\Gamma)}{(n-1)!} \leq a_n(\Gamma) \cdot \left(1+\frac{C'\cdot (1+C)}{n^\chi}\right) =  a_n(\Gamma) \cdot \left(1+\frac{C}{n^\alpha} \frac{C'\cdot (1+C)}{C\cdot n^{\chi-\alpha}}\right)
\]
for some uniform $C'>0$. Now, we observe that, for fixed $C>0$, the factor $\frac{C'\cdot (1+C)}{C\cdot n^{\chi-\alpha}}$ can be made smaller than $1$ by increasing $n$, meaning that the induction step works after a certain $n_0$. To get the base case to hold, we need to increase $C$, which only decreases the factor $\frac{C'\cdot (1+C)}{C\cdot n^{\chi-\alpha}}$, thus proving the corollary.
\end{proof}

\section{Random subgroups and covers}\label{sec_random}

In this section we will study the properties of random index $n$ subgroups of $\Gamma_{p_1,\ldots,p_m}$ and random degree $n$ covers of torus knot complements.

The basic idea is to prove that a random index $n$ subgroup of $C_{p_1}*\cdots * C_{p_m}$ (as an element of $\mathrm{IRS}(C_{p_1}*\cdots*C_{p_m})$) converges to the trivial subgroup. This, together with Theorem \ref{thm_main2} will then imply that a random index $n$ subgroup of $\Gamma_{p_1,\ldots,p_m}$ converges to $L_{p_1,\ldots,p_m}$. Both of these results will be quantitative in the sense that we have control over the number of conjugacy classes a given conjugacy class of either $C_{p_1}*\cdots * C_{p_m}$ or  $\Gamma_{p_1,\ldots,p_m}$  lifts to in a random index $n$ subgroup (Theorem \ref{thm_main3}(a)). This then immediately implies the fact that a random degree $n$ cover of $X_{p_1,\ldots,p_m}$ Benjamini--Schramm converges to $X_{p_1,\ldots,p_m}^\Phi$. Combined with Lemma \ref{lem_betticonvergence}, this convergence implies our result on Betti numbers.

\subsection{Set-up}

Recall that, if $\Gamma$ is a group and $n\in\mathbb{N}$, then
$$
\mathcal{A}_n(\Gamma) = \{H<\Gamma;\; [\Gamma:H] = n\},
$$
and that
$$
Z_K: \mathcal{A}_n(\Gamma)\to\mathbb{N}
$$
counts the number of conjugacy classes that $K$ splits into, i.e.
$$
Z_K(H) = | (K\cap H) / H |
$$
where $H$ acts on $K\cap H$ by conjugation. Finally, if we fix any $g\in K$ and $\varphi:\Gamma \to S_n$ is a transitive homomorphism corresponding to $H$ (cf. Proposition \ref{subgp_trans}), then
$$
Z_K(H) = |\{ j\in \{1,\ldots,n\};\;\varphi(g)\cdot j = j\}|.
$$
Our main goal now is to determine the asymptotic behaviour of the distribution of these random variables as $n\to\infty$.

\subsection{Fixed point statistics of infinite order elements of $C_{p_1}*\cdots*C_{p_m}$}

Our first step is to enlarge our probability space and prove our results there. Concretely, the expression for $Z_K$ in terms of fixed points is well-defined for any homomorphism, not just for transitive ones. As such, we can interpret $Z_K$ as a random variable
$$
Z_K : \Hom(C_{p_1}*\cdots*C_{p_m},S_n) \to \mathbb{N}
$$ 
as well, where we equip $\Hom(C_{p_1}*\cdots*C_{p_m},S_n)$ with the uniform measure $\mathbb{P}^{\Hom}_n$. We will denote the expected value with respect to this measure by $\mathbb{E}^{\Hom}_n$.

It will turn out that for conjugacy classes $K$ of elements of infinite order, $Z_K$ will limit to a Poisson distributed random variable. In this section we will work this out, starting with some set-up.

Suppose that $g\in C_{p_1}*\cdots*C_{p_m}$ is of infinite order and that $K$ is its conjugacy class. Write
$$
g = x_{j_1}^{s_1} \cdots x_{j_l}^{s_l}\;, \quad i=1,\ldots,r.
$$
as a reduced word in the generators $x_1,\ldots,x_m$ of $C_{p_1}*\cdots*C_{p_m}$. By potentially changing the conjugate, we may also assume that the word is cyclically reduced. 

Now, if we want $v\in \{1,\ldots,n\}$ to be a fixed point of $\varphi(g)$ for some $\varphi\in\Hom(C_{p_1}*\cdots*C_{p_m},S_n)$, then there need to be sequences $(w_{t,0}\; w_{t,1}\;\ldots w_{t,s_t})$, for $t=1,\ldots,l$, such that
\begin{equation}\label{eq_phisatisfiesw}
\left\{ \begin{array}{ll}
\varphi(x_{j_t})(w_{t,q}) = w_{t,q-1}, &  q=1,\ldots,s_t\\[2mm]
w_{t,s_t} = w_{t+1,0} & t=1,\ldots,l-1 \\[2mm]
w_{1,0}= w_{l,s_l} =  v. & 
 \end{array}
 \right.
\end{equation}
In other words, if we want $v$ to be a fixed point of $g$, then certain sequences (for which there are many choices) need to appear in the disjoint cycle decompositions of the images of the generators $x_1,\ldots,x_m$. Figure \ref{pic_cycle} gives an example of the situation. Note that some of the labels in these sequences may coincide. 

\begin{figure}
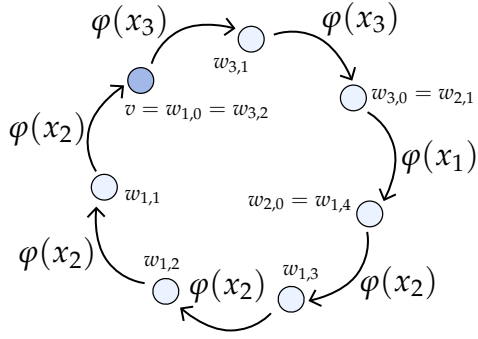

\begin{center}
\begin{overpic}{pic_cycle}
\put(12,69) {{\tiny $v=w_{1,0}=w_{3,2}$}}
\put(1,96) {$\varphi(x_3)$}
\put(40,84) {{\tiny $w_{3,1}$}}
\put(74,97) {$\varphi(x_3)$}
\put(90,74) {{\tiny $w_{3,0} = w_{2,1}$}}
\put(99,53) {$\varphi(x_1)$}
\put(51,40) {{\tiny $w_{2,0}=w_{1,4}$}}
\put(86,13) {$\varphi(x_2)$}
\put(62,18) {{\tiny $w_{1,3}$}}
\put(32,12) {$\varphi(x_2)$}
\put(18,21) {{\tiny $w_{1,2}$}}
\put(-22,22) {$\varphi(x_2)$}
\put(12,42) {{\tiny $w_{1,1}$}}
\put(-25,62) {$\varphi(x_2)$}
\end{overpic}
\caption{$v$ is a fixed point of $\varphi(x_2^4x_1x_3^2)$.}\label{pic_cycle}
\end{center}
\end{figure}

These sequences naturally correspond to labelled graphs. The vertices are labelled by the numbers $w_{t,i}$, which we connect with edges labelled by the generators $x_{j_i}$ according to the conditions in \eqref{eq_phisatisfiesw}. We will say that $\varphi$ \emph{satisfies} this labelled graph. This allows us to decompose the variable $Z_K$ into a finite sum
\begin{equation}\label{eq_decompZK}
Z_K = \sum_{G} \mu(G)\cdot Y_G
\end{equation}
where 
\begin{itemize}
\item the sum is over finite directed graphs $G$ such that
\begin{itemize}
\item the edges are labelled with generators $x_j$,
\item $G$ contains a directed circuit that runs through all of its edges a finite number of times in such a way that the word spelled out by the generators on these edges (in the order given by the circuit) is $g$ 
\item $\mu(G)$ is the number of vertices at which a circuit of the form described above starts.
\end{itemize}
Note that these graphs $G$ do \emph{not} carry vertex labels. We will call such graphs \emph{$K$-graphs}.
\item $Y_G(\varphi)$ counts the number of labelled copies of $G$ that are satisfied by $\varphi$.
\end{itemize}

It will turn out that, asymptotically, the sum above is dominated by cycles. That is, connected $2$-regular graphs. Note that if $g=g_0^k$ is a non-trivial power of a primitive element $g_0$, the sum contains terms corresponding to cycles of length $d\cdot \ell_0(K)$ for every divisor $d$ of $k$, where $\ell_0(K)$ is the minimal word length (with respect to the generating set $x_1,\ldots,x_m$) of the primitive element $g_0$. These will all contribute to the asymptotic behaviour of $Z_K$. Note that when $G$ is a cycle,
$$
\mu(G) = \frac{\#(G)}{\ell_0(K)}
$$
where $\#(G)$ is the number of vertices in $G$ and $\ell_0(K)$ is again the minimal word length of a primitive element $g_0 \in C_{p_1}*\cdots*C_{p_m}$ for which there exists $k\in\mathbb{N}$ with $g_0^k \in K$ \footnote{note that $\ell_0(K)$ divides $\#(G)$.}.

We have:

\begin{theorem}\label{thm_factorialmomentsY} Let $p_1,\ldots,p_m\in \mathbb{N}$ and let $K_1,\ldots, K_r \subset C_{p_1}*\cdots*C_{p_m}$ be conjugacy classes out of which no pair have a common root and none of which contains elements of finite order. Finally, let $\{G_{ij}\}_{j\in J_i}$ be a finite set of distinct $K_i$-graphs for all $i=1,\ldots,r$. Then 
\begin{itemize}
\item If one of the graphs $G_{lm}$ is not a cycle and $k_{ij} \in \mathbb{N}$, $j\in J_i$, $i=1,\ldots, r$ is a tuple so that $k_{lm}>0$, we have
\[
\mathbb{E}_{n}^{\Hom}\left[\prod_{i,j}\left(Y_{G_{ij}}\right)_{k_{ij}}\right] = O\left(n^{-1}\right)
\]
as $n\to\infty$.
\item if not, then we have
$$
\mathbb{E}_{n}^{\Hom}\left[\prod_{i,j}\left(Y_{G_{ij}}\right)_{k_{ij}}\right] = \prod_{i,j} \frac{1}{\mu(G_{ij})^{k_{ij}}} + O\left(n^{-1}\right)
$$
as $n\to\infty$.
\end{itemize}
\end{theorem}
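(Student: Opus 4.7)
I plan to use the method of moments. Expanding via linearity of expectation,
\[
\mathbb{E}_n^{\Hom}\!\left[\prod_{i,j}(Y_{G_{ij}})_{k_{ij}}\right]
 = \sum_{\mathcal H} \mathbb{P}_n^{\Hom}\bigl(\varphi\text{ satisfies }\mathcal H\bigr),
\]
where $\mathcal H$ ranges over configurations: ordered tuples of $k_{ij}$ distinct labelled copies of $G_{ij}$ in $\{1,\ldots,n\}$ (for each pair $i,j$). Because a homomorphism $\varphi\colon C_{p_1}*\cdots *C_{p_m}\to S_n$ is an independent uniform choice of order-dividing-$p_j$ permutations $\sigma_j:=\varphi(x_j)$, the summand factors as $\prod_j N_j(\mathcal H,n)/h_n(C_{p_j})$, where $N_j(\mathcal H,n)$ counts order-$p_j$ permutations extending the $x_j$-labelled partial permutation prescribed by $\mathcal H$.

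Next I will estimate each factor. Since the $K_i$ consist of infinite-order elements, their representatives can be chosen cyclically reduced, so that the $x_j$-edges in any $K_i$-graph decompose into disjoint chains of length strictly less than $p_j$. Using the Volynets--Wilf asymptotics for $h_n(C_{p_j})$ from Theorem \ref{hn_cyclic} together with the exponential-generating-function identity $\sum_n h_n(C_p)z^n/n! = \exp\bigl(\sum_{d\mid p}z^d/d\bigr)$, one obtains
\[
\frac{N_j(\mathcal H,n)}{h_n(C_{p_j})} \sim c_j(\mathcal H)\cdot n^{-E_j(\mathcal H)}
\]
as $n\to\infty$, where $E_j(\mathcal H)$ counts the $x_j$-edges in the underlying labelled graph $H$ of $\mathcal H$ and $c_j(\mathcal H)$ is an explicit constant determined by the chain lengths. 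Combined with the $\sim n^{|V(H)|}$ embeddings of $H$ into $\{1,\ldots,n\}$, each type of configuration contributes $\Theta(n^{|V(H)|-|E(H)|})$ to the sum, up to an explicit constant.

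Now I will classify configurations by the geometry of $H$. A single $K_i$-graph $G$ satisfies $|V(G)|=|E(G)|$ if and only if $G$ is a simple directed cycle, and $|E(G)|>|V(G)|$ otherwise; consequently any configuration containing at least one copy of a non-cycle $G_{lm}$ already contributes $O(n^{-1})$, which yields the first bullet. When every $G_{ij}$ actually appearing is a cycle, any two copies sharing a vertex combine into a labelled graph with $|V|<|E|$, introducing an extra $n^{-1}$; the no-common-root hypothesis is essential here to preclude two labelled cycles from distinct $K_i$ coinciding in $\{1,\ldots,n\}$ as labelled subgraphs, which could otherwise permit $|V|=|E|$ overlaps. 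Hence the dominant $\Theta(1)$ contribution comes from vertex-disjoint cycle configurations, for which the count of embeddings modulo the cyclic automorphism of each cycle is $\sim \prod_{i,j}\bigl(n^{\#(G_{ij})}/\mu(G_{ij})\bigr)^{k_{ij}}$, while the probability of satisfaction is $\sim \prod_{i,j} n^{-k_{ij}\#(G_{ij})}$, combining to give $\prod_{i,j}\mu(G_{ij})^{-k_{ij}}$.

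I expect the main obstacle to lie in the probabilistic asymptotics above: obtaining the sharp asymptotic $N_j/h_n(C_{p_j})\sim c_j\cdot n^{-E_j}$ with the correct constant, and tracking how these constants combine across generators and with the $\mu(G_{ij})^{-1}$ from cyclic automorphisms to yield exactly $\prod\mu(G_{ij})^{-k_{ij}}$. This requires a careful asymptotic analysis of order-$p_j$ permutations conditioned to contain prescribed chains, together with a bookkeeping argument and uniform $O(n^{-1})$ error bounds across configuration types.
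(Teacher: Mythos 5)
Your proposal follows essentially the same strategy as the paper's proof: expand the falling-factorial moments as a sum over labelled configurations, factor the satisfaction probability over the free-product generators using the independent choice of $\varphi(x_j)$, apply the Volynets--Wilf asymptotics to get a per-configuration probability scaling as $n^{-\#\text{edges}}$, and classify configurations by the collapsed labelled graph, with vertex-disjoint cycles dominating and the no-common-root hypothesis excluding overlaps that would produce a new cycle. The one point you flag as the main obstacle — the sharp asymptotics for $N_j(\mathcal H,n)/h_n(C_{p_j})$ — is precisely what the paper elaborates via decomposing the configuration into maximal single-generator words and summing over cycle-length completions, so the plan is sound and complete.
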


The implied constants in the two bounds in the theorem depend both on the graphs $\{G_{ij}\}_{i,j}$ and the tuple $(k_{ij})_{i,j}$. Before we prove the theorem, we observe that it immediately implies the following corollary, which in the case where $r=1$, was proved for certain conjugacy classes by Benaych-Georges \cite{BG1}. In fact, for these conjugacy classes, Benaych-Georges proves similar results for all cycles, and not just the fixed points, but he does not prove that the independence for different conjugacy classes.

\begin{corollary}\label{cor_poissononhom}
 Let $p_1,\ldots,p_m\in \mathbb{N}$ and let $K_1,\ldots, K_r \subset C_{p_1}*\cdots*C_{p_m}$ be conjugacy classes out of which no pair have a common root and none of which contain elements of finite order. Then, as $n\to\infty$, the vector of random variables
$$
\left( Z_{K_1},\ldots,Z_{K_r}\right): \Hom(C_{p_1}*\cdots*C_{p_m},S_n) \to \mathbb{N}^r
$$
converges jointly in distribution to a vector  
$$
\left(Z_{K_1}^\infty, \ldots, Z_{K_r}^\infty\right):\Omega \to \mathbb{N}^r
$$
of independent random variables, such that if $K_i$ is the conjugacy class of a $k^{th}$ power of a primitive element, then
$$
Z_{K_i}^\infty \sim \sum_{d|k}\; d \cdot X_{1/d},  \quad  \text{where } 
X_{1/d} \sim \mathrm{Poisson}(1/d) \text{ and }X_1,\ldots,X_{1/k} \text{ are independent}.
$$
\end{corollary}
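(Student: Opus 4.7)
My plan is to deduce the corollary from Theorem \ref{thm_factorialmomentsY} via the method of moments, using the decomposition \eqref{eq_decompZK} to reduce the fixed-point counts to a finite linear combination of the graph-counting statistics $Y_G$.

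First, I will identify the cycle $K_i$-graphs explicitly. Fix a primitive representative so that $K_i$ is the class of $g_{i,0}^{k_i}$ with $g_{i,0}$ cyclically reduced of word length $\ell_0(K_i)$. A simple word-combinatorial check shows that the directed cycles among $K_i$-graphs are in bijection with divisors $d \mid k_i$: the cycle $C_{i,d}$ has length $d\cdot \ell_0(K_i)$, carries the cyclic word $g_{i,0}^d$ on its edges, and satisfies $\mu(C_{i,d}) = d$. Hence, splitting \eqref{eq_decompZK}, I write
$$Z_{K_i} \;=\; \sum_{d \mid k_i} d\cdot Y_{C_{i,d}} \;+\; R_{K_i}, \qquad R_{K_i} \;=\; \sum_{G \text{ non-cycle}} \mu(G)\,Y_G,$$
where the sum defining $R_{K_i}$ is finite because every $K_i$-graph has at most $\ell(g_i)$ edges.

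Next, I will show $R_{K_i} \to 0$ in probability. Applying the first bullet of Theorem \ref{thm_factorialmomentsY} to a single non-cycle graph (with multiplicity one) gives $\mathbb{E}^{\Hom}_n[Y_G] = O(n^{-1})$ for every non-cycle $K_i$-graph $G$; as these quantities are nonnegative, a union bound and Markov's inequality yield $R_{K_i} \to 0$ in probability for each $i$.

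For the cycle terms, I will feed the family $\{C_{i,d} : 1 \le i \le r,\ d\mid k_i\}$ into the second bullet of Theorem \ref{thm_factorialmomentsY}. The no-common-root hypothesis ensures that the cycles $C_{i,d}$ corresponding to different $K_i$ are genuinely distinct graphs, so the theorem applies and gives, for every tuple $(k_{i,d})$,
$$\mathbb{E}^{\Hom}_n\Bigl[\,\prod_{i,\,d\mid k_i} (Y_{C_{i,d}})_{k_{i,d}}\Bigr] \;\longrightarrow\; \prod_{i,\,d\mid k_i} \frac{1}{d^{\,k_{i,d}}}.$$
By the method of moments (Theorem \ref{thm_moments}), the random vector $(Y_{C_{i,d}})_{i,\,d\mid k_i}$ converges jointly in distribution to a vector of independent random variables $X_{i,1/d} \sim \mathrm{Poisson}(1/d)$. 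The continuous mapping theorem then gives $\sum_{d\mid k_i} d\cdot Y_{C_{i,d}} \to \sum_{d\mid k_i} d\cdot X_{i,1/d}$ jointly in $i$, and Slutsky's theorem combines this with $R_{K_i}\to 0$ to complete the proof.

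The substantive content has already been packed into Theorem \ref{thm_factorialmomentsY}; in this corollary the only genuine piece of work is the combinatorial bookkeeping of step one (identifying cycle $K_i$-graphs and computing $\mu(C_{i,d})=d$), after which the remainder is a standard method-of-moments/Slutsky argument.
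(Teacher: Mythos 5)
Your proposal is correct and follows essentially the same route as the paper: deduce Poisson joint convergence of the $Y_G$'s for cycle graphs via Theorem \ref{thm_factorialmomentsY} and the method of moments, and feed that into the decomposition \eqref{eq_decompZK}. The only difference is cosmetic --- you handle the non-cycle terms separately via Markov's inequality and Slutsky rather than folding them into one joint convergence statement, which is a slightly cleaner way to deal with the fact that Theorem \ref{thm_moments} is stated only for strictly positive Poisson parameters.
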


\begin{proof}
Theorem \ref{thm_factorialmomentsY} together with Theorem \ref{thm_moments} implies that, when $\{G_{ij}\}_{j\in J_i}$ is a finite set of distinct $K_i$-graphs for all $i=1,\ldots,r$, then, as $n\to\infty$, the vector of random variables
$$
\left(Y_{G_{ij}}: \Hom(C_{p_1}*\cdots*C_{p_m},S_n) \longrightarrow \mathbb{N}\right)_{i,j}
$$
converges jointly in distribution to a vector  
$$
\left(Y_{G_{ij}}^\infty:\Omega \to \mathbb{N}\right)_{i,j}
$$
of independent random variables, such that:
\begin{itemize}
\item if $G_{ij}$ is a cycle, then
$$
Y_{G_{ij}}^\infty \sim \mathrm{Poisson}\left(\frac{1}{\mu(G_{ij})}\right).
$$
\item if not, then $Y_{G_{ij}}^\infty$ is constant and equal to $0$.
\end{itemize}
The decomposition in \eqref{eq_decompZK} now implies the result.
\end{proof}

\begin{proof}[Proof of Theorem \ref{thm_factorialmomentsY}] We will write $\Lambda=C_{p_1}*\cdots*C_{p_m}$ and $\mathcal{H}_n(\Lambda) = \Hom(\Lambda,S_n)$. 

Observe that the random variable 
$$
\prod_{i,j}\left(Y_{G_{ij}}\right)_{k_{ij}}:\mathcal{H}_n(\Lambda) \to \mathbb{N}
$$ 
counts tuples $(F_{ij})_{i,j}$ of vertex labelled copies $F_{ij}$ of $G_{ij}$ that are satisfied by $\varphi(g_i)$. As such, we may write
$$
\mathbb{E}_n^{\Hom}\left[\prod_{i,j}\left(Y_{G_{ij}}\right)_{k_{ij}}\right] = \sum_{\alpha \in A(n)} \mathbb{E}_n[\ind_\alpha],
$$
where 
$$
A(n) = \left\{ \alpha=(\alpha_{ij})_{i,j};\; \begin{array}{c}\alpha_{ij} \text{ a }k_{ij}\text{-tuple of }K_i\text{-graphs, isomorphic to }G_{ij}\text{, whose} \\ 
\text{vertices are labelled with elements of }\{1,\ldots.n\}\end{array}\right\}
$$
and
$$
\ind_\alpha :\mathcal{H}_n(\Lambda) \to \{0,1\}
$$
satisfies $\ind_\alpha(\varphi) = 1$ if and only if $\varphi$ satisfies all the labelled graphs corresponding to $\alpha$. Note that many of these indicators are constant $0$ functions, because the combination of labels involved leads to a contradiction about the properties of $\varphi(x_j)$ for some $j\in\{1,\ldots,m\}$.

\medskip
\noindent\textit{Cycles.} We will now start with the case where all the graphs $G_{ij}$ are cycles. We will write
$$
A(n) = A_1(n) \sqcup A_2(n)
$$
where
$$
A_1(n) = \{\alpha\in A; \;\text{every label appears at most once in }\alpha\}
$$
and
$$
\quad A_2(n) = A(n)\setminus A_1(n).
$$
Note that within a given cycle, the labels are already disjoint, so in the labelled tuples in $A_2(n)$, labels corresponding to different cycle (that are potentially copies of the same cycle $G_{ij}$) coincide.

We now need to prove two facts, namely
$$
\lim_{n\to\infty} \sum_{\alpha\in A_1(n)} \mathbb{E}^{\Hom}_n[\ind_\alpha] = \prod_{i,j} \frac{1}{\mu \left(G_{ij}\right)^{k_{ij}} }\quad \text{and} \quad  \sum_{\alpha \in A_2(n)}\mathbb{E}^{\Hom}_n[\ind_\alpha] \stackrel{n\to\infty}{=} O(n^{-1}).
$$

We start with estimating $\mathbb{E}_n^{\Hom}[\ind_\alpha]$ for $\alpha\in A_1$. Observe that
$$
\mathbb{E}_n^{\Hom}[\ind_\alpha] = \frac{ \left|\left\{\varphi\in \mathcal{H}_n(\Lambda);\; \varphi \text{ satisfies } \alpha \right\}\right|}{h_n(\Lambda)}.
$$
In order to count the numerator on the right hand side, we need to count the number of ways to complete the information given in $\alpha$ to a homomorphism $\Lambda \to S_n$. We do this as follows.

The labels in $\alpha$ must appear, in the order dictated by $\alpha$, as parts of cycles in a disjoint cycle decomposition of the $x_j$'s. We will decompose these labels appearing in $\alpha$ into \emph{words}. These are strings of labels in $\{1,2,\ldots,n\}$ that appear as the vertices of some path in $G_{ij}$, all of whose edges are marked with the same generator, and which is maximal with respect to this last property.

So, a choice needs to be made for the lengths of these cycles that contain these words, which words appear together in a cycle, and which other labels appear in these cycles. Once these cycles have been completed, this determines $m$ homomorphisms $C_{p_j} \to S_{D_j}$, where $D_j$ depends on the chosen cycle lengths. To complete this into a homomorphism  $C_{p_j} \to S_n$, we have the choice out of $h_{n-D_j}(C_{p_j},S_n)$ homomorphisms. This, as $n\to\infty$, gives a total of
$$
\sim \prod\limits_{j=1}^m \; \sum\limits_{\{S_1,\ldots,S_t\} \models W_j(\alpha)} \sum\limits_{\substack{d_1,\ldots,d_t|p_j \\ 
d_q \geq \sum_{w\in S_q} \ell(w) }} C(S,d)\cdot n^{\sum_q d_q -\sum_{w\in W_j(\alpha)}\ell(w)} h_{n-\sum_q d_q}(C_{p_j})
$$
ways to complete the information in $\alpha$ to a homomorphism, where
\begin{itemize}
\item $W_j(\alpha)$ is the set of words that appear in $\alpha$ and pose a condition on $\varphi(x_j)$,
\item the notation $\{S_1,\ldots,S_t\}\models W_j(\alpha)$ means that $\{S_1,\ldots,S_t\}$ forms a set partition of $W_j(\alpha)$ (these are the groups of words that are going to appear together in cycles in $\varphi(x_j)$),
\item the numbers $d_1,\ldots,d_t$ are going to be the lengths of the cycles containing the words in the sets $\{S_1,\ldots,S_t\}$,
\item $\ell(w)$ is the number of labels in a word $w$,
\item $C(S,d)$ is a combinatorial constant that counts the number of ways to distribute the words over cycles in according to $\{S_1,\ldots,S_t\}$ and $d_1,\ldots,d_t$. Moreover, if the set partition $\{S_1,\ldots,S_t\}$ consists of singletons and $d_1=d_2=\ldots =d_t = p_j$ then $C(S,d)=1$
\item and we have already made one simplification: the powers of $n$ should in reality take the form of a falling factorial. However, since we are only interested in asymptotics and all the products involved are of fixed bounded length, we replaced them by powers of $n$, whence the ``$\sim$''.
\end{itemize}
Now we notice that all the sums and products involved are finite, we may apply Theorem \ref{hn_cyclic} to single out the largest term. This implies that, as $n\to\infty$, 
\begin{align*}
\mathbb{E}_n^{\Hom}[\ind_\alpha] & \sim \prod_{j=1}^m n^{\sum_{w\in W_j(\alpha)} p_j-\ell(w)} \frac{h_{n-|W_j(\alpha)|\cdot p_j}(C_{p_j})}{h_n(C_{p_j})}  \\
& \sim \prod_{j=1}^m n^{\sum_{w\in W_j(\alpha)} p_j-\ell(w)} n^{-|W_j(\alpha)|\cdot p_j\cdot \left(1-\frac{1}{p_j}\right)}  \\
& =  \prod_{j=1}^m n^{\sum_{w\in W_j(\alpha)} 1 - \ell(w)}  \\
& =  \prod_{i,j} n^{-\#(G_{ij})\cdot k_{ij}}.
\end{align*}

Another important thing to observe is that $\mathbb{E}_n^{\Hom}[\ind_\alpha]$ is constant on $A_1(n)$: it does not depend on the labels involved. This implies that
\begin{equation}\label{eq_EA1}
\sum_{\alpha\in A_1(n)} \mathbb{E}_n^{\Hom}[\ind_\alpha] \sim |A_1(n)|\cdot  \prod_{i=r}^m n^{-|g_i|\cdot k_i}
\end{equation}
as $n\to\infty$. Moreover,
$$
|A_1(n)| = \left( \prod_{i,j}  \frac{1}{\mu(G_{ij})^{k_{ij}}} \right)\cdot n\cdot\left(n-1\left)\cdots \left(n-\left(\sum_{i,j} \#(G_{ij}) \cdot k_{ij}\right)+1\right)\right.\right.,
$$
it is the number of ways to the label the tuples of copies of $G_{ij}$ with distinct elements from $\{1,\ldots,n\}$. The factor $ \prod_{i,j}  \mu(G_{ij})^{-^k_{ij}}$ in this formula comes from the edge-label-preserving symmetries of the graph $G_{ij}$. Together with \eqref{eq_EA1}, this proves our claim that
$$
\lim_{n\to\infty} \sum_{\alpha\in A_1(n)} \mathbb{E}_n^{\Hom}[\ind_\alpha] = \prod_{i,j}  \frac{1}{\mu(G_{ij})^{k_{ij}}}.
$$

In order to prove that the other term tends to zero, we argue in a similar fashion. Indeed, we may think of the elements $\alpha \in A_2(n)$ as corresponding to different graphs, obtained by identifying the vertices of the different copies of the $G_{ij}$'s that carry the same labels. Write
$$
\sum_{\alpha\in A_2(n)}\mathbb{E}_n^{\Hom}[\ind_\alpha] = \sum_{G'} \sum_{\alpha\in A_{G'}(n)}\mathbb{E}_n^{\Hom}[\ind_\alpha]  
$$ 
where the sum is over isomorphism types types $G'$ of graphs obtained by collapsing the labelled graphs in $A_2(n)$ and $A_{G'}(n)$ consists of all $\alpha\in A_2(n)$ whose graph has isomorphism type $G'$.

Suppose $G'$ is such an isomorphism type with $v(G')$ vertices and $e(G')$ edges. Again $\mathbb{E}_n^{\Hom}[\ind_\alpha]$ is the same for all $\alpha\in A_{G'}(n)$. Moreover, with exactly the same arguments as above we have
\[
\mathbb{E}_n^{\Hom}[\ind_\alpha] \sim n^{-e(G')} \text{ as }n\to\infty \;\forall\alpha\in A_{G'}(n)\quad \text{and} \quad  |A_{G'}(n)|\leq n^{v(G')}.
\]
Because $v(G')<e(G')$ for all $G'$ appearing in the sum (this uses the no common root assumption, because without it some of the collapsed graphs might be cycles), the sum indeed tends to zero, which finishes the proof in the case of cycles.

\medskip
\noindent\textit{Other graphs.} If one of the graphs $G_{ij}$ is not a cycle, then the argument from the previous paragraph applies and we obtain
$$
\mathbb{E}_n^{\Hom}\left[\prod_{i,j}\left(Y_{G_{ij}}\right)_{k_{ij}}\right]  \stackrel{n\to\infty}{=} O\left(n^{-1}\right).
$$
\end{proof}

\subsection{Fixed point statistics of finite order elements of $C_{p_1}*\cdots*C_{p_m}$}

Given a finite cyclic group $C_p=\langle x| x^p \rangle$ of order $p$ and a divisor $d|p$, we let 
$$
Y_d:\Hom(C_p,S_n)\to\mathbb{N}
$$
denote the random variable that associates the number of $d$-cycles of $\varphi(x)$ to $\varphi \in \Hom(C_p,S_n)$.

M\"uller and Schlage-Puchta \cite{MullerPuchta2,MullerPuchta3} proved that these variables satisfy a central limit theorem. Similar results we also proved by Benaych-Georges in \cite{BG2}.

\begin{theorem}\label{thm_fin_ord_elts_sym_group}
Let $\mathcal{D} = \{ d \in \mathbb{N};\; d|p\}$.
\begin{enumerate}
\item~\label{clt_cycles} (Central limit theorem) \cite[Lemma 4]{MullerPuchta3} The normalised vector
$$
\left( \frac{Y_d-\frac{1}{d} n^{d/p}}{\frac{1}{\sqrt{d}} n^{d/2p}} \right)_{d\in\mathcal{D}\setminus\{p\}}: \Hom(C_p,S_n)\to\mathbb{N}^{\mathcal{D}\setminus\{p\}} 
$$
converges in distribution to $\otimes_{d\in\mathcal{D}\setminus\{p\}} \mathcal{N}(0,1)$.
\item~\label{clt_max_cycles} \cite[Lemma 3]{MullerPuchta2} (Central limit theorem for maximal cycles) Set $d_{\max}=\max\{ d\in \mathcal{D};\; d<p\}$. The normalised random variable
$$
\frac{\frac{1}{p}n-Y_p}{\frac{1}{\sqrt{d_{\max}}} n^{d_{\max}/2p}}: \Hom(C_p,S_n) \to \mathbb{N}
$$ 
converges in distribution to $\mathcal{N}(0,1)$.
\end{enumerate}
\end{theorem}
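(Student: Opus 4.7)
The plan is to apply the method of moments to the joint factorial moments of the cycle counts $(Y_d)_{d\in\mathcal{D}\setminus\{p\}}$. For any tuple $(k_d)$ of non-negative integers, a direct counting argument gives
\[
\mathbb{E}_n^{\Hom}\Bigl[\prod_{d\in\mathcal{D}\setminus\{p\}} (Y_d)_{k_d}\Bigr]
= \frac{n!}{(n - M)!\,\prod_d d^{k_d}}\cdot \frac{h_{n-M}(C_p)}{h_n(C_p)}, \qquad M := \sum_d d\, k_d,
\]
obtained by selecting a family of mutually disjoint cycles consisting of $k_d$ ordered $d$-cycles for each $d$, and then completing to a permutation of order dividing $p$ on the remaining $n-M$ points. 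Applying Theorem \ref{hn_cyclic} together with Stirling's formula yields $n!/(n-M)! \sim n^M$ and $h_{n-M}(C_p)/h_n(C_p) \sim n^{-M(1-1/p)}$, since $(n-M)^{e/p} - n^{e/p}\to 0$ for each $e<p$ and $M$ fixed, so the exponential sums appearing in Theorem \ref{hn_cyclic} contribute only a $1+o(1)$ factor. Multiplying through,
\[
\mathbb{E}_n^{\Hom}\Bigl[\prod_d (Y_d)_{k_d}\Bigr] \;\sim\; \prod_d \Bigl(\tfrac{n^{d/p}}{d}\Bigr)^{k_d},
\]
which are precisely the joint factorial moments of independent Poisson random variables with parameters $\lambda_d(n) = n^{d/p}/d$.

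Since $\lambda_d(n)\to\infty$, and a Poisson variable with parameter $\lambda\to\infty$ standardised as $(X-\lambda)/\sqrt{\lambda}$ converges to $\mathcal{N}(0,1)$, part~(1) should follow by lifting this joint Poisson approximation to a CLT. The main obstacle is precisely this lift: a CLT via the method of moments needs factorial-moment estimates that are uniform over $k_d$ growing mildly with $n$, whereas the calculation above is only pointwise in the $k_d$. I would resolve this by tracking the effective error terms in the asymptotic of Theorem \ref{hn_cyclic}, which are available from M\"uller's analysis in \cite{Muller}, and checking that they remain negligible in the range $k_d \lesssim n^{d/(2p)}$ that suffices for the moment method. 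This is essentially the route taken by M\"uller--Schlage-Puchta in \cite{MullerPuchta3,MullerPuchta2}, via Hayman-admissibility and saddle-point analysis of the exponential generating function $\sum_n h_n(C_p)\, x^n/n! = \exp\bigl(\sum_{d\mid p} x^d/d\bigr)$; alternatively, the Chen--Stein method would give a direct approach avoiding moments entirely.

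For part~(2), the deterministic identity $\sum_{d\mid p} d\, Y_d = n$ gives
\[
\frac{n}{p} - Y_p \;=\; \frac{1}{p}\sum_{d\in\mathcal{D}\setminus\{p\}} d\, Y_d,
\]
so the fluctuations of $Y_p$ about $n/p$ are controlled entirely by those of the smaller cycle counts. Only $d = d_{\max}$ contributes at the largest order: $\mathrm{Var}\bigl((d_{\max}/p)\, Y_{d_{\max}}\bigr)$ is of order $n^{d_{\max}/p}$, while all other $d<p$ contribute strictly smaller orders of variance, and so contribute only to the centering. Part~(1) applied to $Y_{d_{\max}}$, together with the negligibility of the lower-order terms, therefore delivers the claimed Gaussian limit with the prescribed normalisation $n^{d_{\max}/(2p)}/\sqrt{d_{\max}}$.
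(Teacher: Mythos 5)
The paper does not prove this theorem; both parts are imported directly from M\"uller--Schlage-Puchta, so there is no internal argument to compare against. Your plan for part~(1) is correct and honest as far as it goes: the joint factorial moment identity and its asymptotic evaluation via Theorem~\ref{hn_cyclic} are right, and you correctly flag that matching fixed-$k$ factorial moments to the diverging parameters $\lambda_d(n)=n^{d/p}/d$ only yields Poisson approximation in the sense of Theorem~\ref{thm_moments}, not a CLT, without a further uniformity-in-$k$ or generating-function argument. The proposed resolutions (effective error bounds, Hayman admissibility, Chen--Stein) match the literature's route but are not carried out, so part~(1) is an acknowledged-incomplete plan rather than a proof.

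Part~(2) contains a genuine gap. Your reduction via $\sum_{d\mid p}dY_d=n$ is the right move, but you stop short of checking that the stated centering $n/p$ and normalisation $n^{d_{\max}/(2p)}/\sqrt{d_{\max}}$ actually emerge from it --- and they do not. Since
\[
\frac{n}{p}-Y_p=\frac{1}{p}\sum_{\substack{d\mid p\\ d<p}}d\,Y_d
\]
and $\mathbb{E}[Y_d]\sim n^{d/p}/d$, one gets $\mathbb{E}\bigl[\tfrac{n}{p}-Y_p\bigr]\sim\tfrac{1}{p}\sum_{d<p}n^{d/p}\asymp n^{d_{\max}/p}/p$, which dwarfs the claimed fluctuation scale $n^{d_{\max}/(2p)}$. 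Moreover, from the asymptotic independence and Poisson-like variances in part~(1), $\mathrm{Var}(Y_p)=\tfrac{1}{p^2}\mathrm{Var}\bigl(\sum_{d<p}dY_d\bigr)\sim\tfrac{d_{\max}}{p^2}\,n^{d_{\max}/p}$, so the prescribed normalisation would give limiting variance $(d_{\max}/p)^2$, not $1$. A concrete sanity check: for $p=2$, $d_{\max}=1$, the identity $Y_1+2Y_2=n$ forces $\tfrac{n}{2}-Y_2=\tfrac{1}{2}Y_1$, and $(\tfrac{n}{2}-Y_2)/n^{1/4}\approx \tfrac{1}{2}n^{1/4}+\tfrac{1}{2}\mathcal{N}(0,1)\to\infty$. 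So the statement as transcribed in the paper cannot be correct as written; the version your reduction actually proves (granting part~(1)) has $Y_p$ centred at $\mathbb{E}[Y_p]\approx\tfrac{1}{p}\bigl(n-\sum_{d<p}n^{d/p}\bigr)$ and normalised by $\tfrac{\sqrt{d_{\max}}}{p}\,n^{d_{\max}/(2p)}$. Completing the reduction you started would have surfaced this discrepancy.
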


With this in hand, we can now determine the asymptotic behaviour of the variables $Z_K$ for conjugacy classes $K$ consisting of finite order elements.

\begin{corollary}\label{cor_cltonhom}
Let $K_1,\ldots,K_r\subset C_{p_1}*\cdots*C_{p_m}$ be distinct non-trivial conjugacy classes of elements of orders $k_1,\ldots,k_r\in\mathbb{N}$ respectively. Then
\begin{enumerate}
\item\label{expectation_finite_order} As $n\to\infty$,
$$
\mathbb{E}_n^{\Hom}\left[Z_{K_i}\right] \sim n^{1/k_i}.
$$
\item\label{clt_finite_order} and if for $i=1,\ldots,r$, $K_i$ is the conjugacy class of $x_{j_i}^{l_i}$, then the vector of random variables
$$
\left(\frac{Z_{K_1}-n^{1/k_1}}{\sqrt{l_1}\cdot n^{1/2k_1}},\ldots, \frac{Z_{K_r}-n^{1/k_r}}{\sqrt{l_r}\cdot n^{1/2k_r}} \right): \Hom(C_{p_1},\ldots,C_{p_m}, S_n) \to \mathbb{N}^r
$$
converges in distribution to $\mathcal{N}(0,1)^{\otimes r}$ as $n\to \infty$.
\end{enumerate}
\end{corollary}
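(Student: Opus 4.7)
The plan is to reduce Corollary \ref{cor_cltonhom} to the Müller--Schlage-Puchta central limit theorem (Theorem \ref{thm_fin_ord_elts_sym_group}) via the free product structure. First, I would use the fact that there are no relations between different generators of $C_{p_1}*\cdots*C_{p_m}$, so $\Hom(C_{p_1}*\cdots*C_{p_m},S_n)$ factors as $\prod_{j=1}^m \Hom(C_{p_j},S_n)$ and the uniform measure factors accordingly. Under $\mathbb{P}_n^{\Hom}$, the images $\varphi(x_1),\ldots,\varphi(x_m)$ are therefore mutually independent, each $\varphi(x_j)$ being uniform on the set of $\sigma\in S_n$ with $\sigma^{p_j}=1$.

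Next, since $K_i$ is the conjugacy class of $x_{j_i}^{l_i}$, one has $Z_{K_i}(\varphi) = |\mathrm{Fix}(\varphi(x_{j_i})^{l_i})|$. A point is fixed by $\sigma^{l}$ (with $\sigma^{p}=1$) if and only if the cycle of $\sigma$ containing it has length dividing $\gcd(p,l)$, so writing $Y_d^{(j)}$ for the number of $d$-cycles of $\varphi(x_j)$, one obtains the decomposition
\[
Z_{K_i} \;=\; \sum_{d\,\mid\,\gcd(p_{j_i},l_i)}\; d\cdot Y_d^{(j_i)}.
\]
The key observation is that this sum is dominated by the largest admissible divisor, $d_0:=\gcd(p_{j_i},l_i)=p_{j_i}/k_i$. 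By Theorem \ref{thm_fin_ord_elts_sym_group}(1), $\mathbb{E}_n^{\Hom}[Y_d^{(j)}] \sim \tfrac1d n^{d/p_j}$, and summing gives $\mathbb{E}_n^{\Hom}[Z_{K_i}] \sim n^{d_0/p_{j_i}} = n^{1/k_i}$, establishing part (1).

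For part (2), the fluctuations of $Z_{K_i}$ around $n^{1/k_i}$ are driven by the top term $d_0\cdot Y_{d_0}^{(j_i)}$: by Theorem \ref{thm_fin_ord_elts_sym_group}(1), the normalized variable $(Y_{d_0}^{(j_i)}-\tfrac{1}{d_0}n^{d_0/p_{j_i}})/(\tfrac{1}{\sqrt{d_0}} n^{d_0/2p_{j_i}})$ converges to $\mathcal{N}(0,1)$, and multiplication by $d_0$ produces the asserted scaling $\sqrt{l_i}\cdot n^{1/2k_i}$ (after identifying $l_i$ with the canonical divisor $d_0$ of $p_{j_i}$). Independence across $i$ comes from two sources: when $j_i\neq j_{i'}$ the variables $\varphi(x_{j_i})$ and $\varphi(x_{j_{i'}})$ are independent; when $j_i=j_{i'}$, distinctness of $K_i$ and $K_{i'}$ forces distinct dominant cycle lengths $d_0$, and Theorem \ref{thm_fin_ord_elts_sym_group}(1) delivers joint convergence of the vector $(Y_d^{(j)})_{d}$ to independent normals.

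The main obstacle I anticipate is showing that the subleading cycle-count contributions — those with $d<d_0$ — are negligible at the CLT scale $n^{1/2k_i}$, both in the centering and in the variance. This requires comparing the rates $n^{d/p_{j_i}}$ for divisors $d<d_0$ against $n^{d_0/2p_{j_i}}$ and verifying that the centering $n^{1/k_i}$ captures the asymptotic mean up to $o(n^{1/2k_i})$. The companion delicate point is matching the normalization $\sqrt{l_i}\,n^{1/2k_i}$ in the statement with the scaling $\sqrt{d_0}\,n^{d_0/2p_{j_i}}$ coming from Theorem \ref{thm_fin_ord_elts_sym_group}(1), which aligns precisely when $l_i$ is taken to be the divisor $p_{j_i}/k_i$.
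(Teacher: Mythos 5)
Your proposal follows essentially the same route as the paper: decompose $Z_{K_i}$ into a weighted sum $\sum_d d\,Y_d^{(j_i)}$ of cycle counts for the generator $x_{j_i}$, identify the top term $d_0 = p_{j_i}/k_i$, invoke the M\"uller--Schlage-Puchta central limit theorem (Theorem~\ref{thm_fin_ord_elts_sym_group}), and use the free product structure for independence across distinct generators plus the joint CLT for cycle counts of a single generator.

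The one place where your argument and the paper's diverge is part~(1). You derive $\mathbb{E}_n^{\Hom}[Y_d^{(j)}] \sim \tfrac{1}{d} n^{d/p_j}$ ``by Theorem~\ref{thm_fin_ord_elts_sym_group}(1),'' but that theorem is a convergence-in-distribution statement and does not, on its own, give convergence of expectations; you would additionally need a uniform integrability argument. The paper instead computes the expectation exactly via the exponential generating function $F_{C_p}(x,y)$ (Lemma~\ref{lem_expprinc}), obtaining $\mathbb{E}^{\Hom}_n[Y_d] = \tfrac{(n)_d}{d}\,\tfrac{h_{n-d}(C_p)}{h_n(C_p)}$, and then applies the Volynets--Wilf asymptotics (Theorem~\ref{hn_cyclic}). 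That is both more rigorous and self-contained for the expectation statement; you should replace your appeal to the CLT with this (or supply the missing integrability step). You also flag but do not resolve the comparison of subleading terms $n^{d/p}$ ($d < d_0$) against the CLT scale $n^{d_0/2p}$; that comparison is not entirely automatic (for instance it fails in the centering when $d_0$ is even and $d_0/2$ divides $d_0$, so the ``no common root'' hypothesis and the structure of $l_i$ matter here), and a complete write-up would need to make it precise. Finally, the claim that distinct conjugacy classes with $j_i = j_{i'}$ force distinct dominant cycle lengths is only true under the additional hypothesis ruling out common roots; without it (e.g.\ $x^1$ and $x^2$ in $C_5$) the dominant cycle lengths coincide and the independence claim fails --- the paper's proof shares this caveat, so it is not a new issue in your proposal, but it is worth noting that the corollary is applied downstream only under the no-common-root assumption.
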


\begin{proof}
$K_i$ is the conjugacy class of a power $x_j^l$ of one of the generators $x_j$ of $C_{p_1}*\cdots*C_{p_m}$. As such
$$
Z_{K_i} = \sum_{d|l} \;d \cdot Y_d^{(j)},
$$
where $Y_d^{(j)}$ counts the number of $d$-cycles of $x_j$.

By definition:
$$
\mathbb{E}^{\Hom}_n[Y_d] = \frac{1}{h_n(C_p)} \sum_{\rho \in \Hom(C_{p_j},S_n)} Y_d(\rho)  =   \frac{1}{h_n(C_p)} \sum_{|\pi| = n}
 h_{\pi}(C_p) \cdot \pi_d.
$$
Lemma \ref{lem_expprinc} gives us
$$
\frac{1}{d}x^{d} \cdot \exp\left( \sum_{i=1}^d \frac{1}{i}x^i y_i \right) = \frac{\partial}{\partial y_d} F_{C_p}(x,y) 
= \sum_\pi \frac{h_\pi(C_p)}{|\pi| !} x^{|\pi|} \left(\prod_{j\neq d}y_j^{\pi_j}\right) \cdot y^{\pi_d-1}\cdot \pi_d.
$$
Setting all the variables $y_i=1$, we obtain
$$
\frac{1}{d}x^{d} \exp\left( \sum_{i=1}^d \frac{a_i(C_p)}{i}x^i \right) =  \sum_{n=0}^\infty \frac{x^n}{n!} \sum_{|\pi|=n} h_\pi(C_p) \cdot \pi_d.
$$
Lemma \ref{lem_expprinc} also tells us that
$$
\frac{1}{d}x^{d} \exp\left( \sum_{i=1}^d \frac{a_i(C_p)}{i}x^i \right) = \sum_{n=0}^\infty\frac{1}{d}\frac{h_n(C_p)}{n!}x^{n+ d},
$$
so we obtain
$$
\mathbb{E}^{\Hom}_n[Y_d] =\frac{h_{n-d}(C_p)}{h_n(C_p)} \cdot \frac{n}{d} 
$$
This, together with Theorem \ref{hn_cyclic} implies point \ref{expectation_finite_order}.

Moreover, because of this bound on the expectation, we obtain that, in distribution 
$$
\frac{Z_{K_i}-n^{1/k_i}}{\sqrt{l_i}\cdot n^{1/2k_i}} \approx \frac{\frac{p}{k_i} Y^{(j)}_{p/k_i} - n^{1/k_i}}{\sqrt{l_i} \cdot n^{1/2k_i}} = \frac{ Y^{(j)}_{p/k_i} - \frac{k_i}{p} n^{1/k_i}}{\sqrt{\frac{k_1}{p_i}} \cdot n^{1/2k_i}}.
$$
On top of that, $k_i>1$, and if two of the conjugacy classes $K_{i_1}$ and $K_{i_2}$ correspond to different powers of the same generator $x_j$, then $Y_{p/k_{i_1}}$ and $Y_{p/k_{i_2}}$ are distinct. This means that point \ref{clt_cycles} of Theorem \ref{thm_fin_ord_elts_sym_group} applies and that, together with the fact that when $K_{i_1}$ and $K_{i_2}$ correspond to powers of different generators $x_{j_1}$ and $x_{j_2}$, $Z_{K_{i_1}}$ and $Z_{K_{i_2}}$ are independent -- a consequence of the free product structure -- implies item \ref{clt_finite_order}.
\end{proof}

\subsection{Statistics for random subgroups of $C_{p_1}*\cdots*C_{p_m}$ and Benjamini--Schramm convergence}

From the above we also obtain that $Z_K$ are asymptotically independent Poisson-distributed variables when seen as random variables on the set of index $n$ subgroups of $C_{p_1}*\cdots*C_{p_m}$.

\begin{theorem}\label{thm_statisticsforC*...*C}
Let $p_1,\ldots,p_m\in \mathbb{N}$ such that $\sum_{i=1}^m \frac{1}{p_i} < m-1$. Let $K_1,\ldots, K_r \subset C_{p_1}*\cdots*C_{p_m}$ be non-trivial conjugacy classes out of which no pair have a common root.
\begin{enumerate}
\item~\label{poisson_for_randsubgroup} If $K_1,\ldots, K_r \subset C_{p_1}*\cdots*C_{p_m}$ are conjugacy classes of infinite order elements. Then, as $n\to\infty$, the vector of random variables
$$
(Z_{K_1},\ldots,Z_{K_r}): \mathcal{A}_n(C_{p_1}*\cdots*C_{p_m}) \to \mathbb{N}^r
$$
converges jointly in distribution to a vector of 
$$
(Z_{K_1}^\infty,\ldots,Z_{K_r}^\infty):\Omega \to \mathbb{N}^r
$$
of independent random variables, such that if $K_i$ is the conjugacy class of a $k^{th}$ power of a primitive element
$$
Z_{K_i}^\infty \sim \sum_{d|k}\; d \cdot X_{1/d},
$$
where $X_{1/d} \sim \mathrm{Poisson}(1/d)$ and $X_1,\ldots,X_{1/k}$ are independent.
\item~\label{clt_for_ransubgroup} If $K_1,\ldots,K_r\subset C_{p_1}*\cdots*C_{p_m}$ are conjugacy classes of elements of orders $k_1,\ldots,k_r\in\mathbb{N}$ respectively and if for $i=1,\ldots,r$, $K_i$ is the conjugacy class of $x_{j_i}^{l_i}$, then the vector of random variables
$$
\left(\frac{Z_{K_1}-n^{1/k_1}}{\sqrt{l_1}\cdot n^{1/2k_1}},\ldots, \frac{Z_{K_r}-n^{1/k_r}}{\sqrt{l_r}\cdot n^{1/2k_r}} \right): \mathcal{A}_n(C_{p_1}*\cdots*C_{p_m})  \to \mathbb{N}^r
$$
converges in distribution to $\mathcal{N}(0,1)^{\otimes r}$ as $n\to \infty$.
\end{enumerate}
\end{theorem}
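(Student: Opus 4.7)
The strategy is to transfer the two convergence statements already established for the uniform measure on $\mathcal{H}_n(\Lambda) := \Hom(\Lambda, S_n)$ with $\Lambda = C_{p_1}\ast\cdots\ast C_{p_m}$ --- namely Corollary \ref{cor_poissononhom} for point \eqref{poisson_for_randsubgroup} and Corollary \ref{cor_cltonhom} for point \eqref{clt_for_ransubgroup} --- to the uniform measure on $\mathcal{A}_n(\Lambda)$. The bridge between the two measures is the set $\mathcal{T}_n(\Lambda)$ of transitive homomorphisms: Proposition \ref{subgp_trans} shows that the map $\varphi \mapsto \mathrm{Stab}_\varphi\{1\}$ is an $(n-1)!$-to-one surjection $\mathcal{T}_n(\Lambda) \to \mathcal{A}_n(\Lambda)$, so the uniform measure on $\mathcal{A}_n(\Lambda)$ is the pushforward of the uniform measure on $\mathcal{T}_n(\Lambda)$.

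The first key observation is that, for any conjugacy class $K$ and any $g\in K$, the value $Z_K(H)$ is equal to the number of fixed points of $\varphi(g)$ acting on $\{1,\ldots,n\}$ whenever $\varphi\in\mathcal{T}_n(\Lambda)$ corresponds to $H$ (this is already recorded before Lemma \ref{lem_IRSfixedpoints}). In particular this quantity is invariant under the $S_{n-1}$-relabeling ambiguity. Consequently the joint distribution of $(Z_{K_1},\ldots,Z_{K_r})$ under the uniform measure on $\mathcal{A}_n(\Lambda)$ coincides with the joint distribution of the vector of fixed point counts of the $\varphi(g_i)$ under the uniform measure on $\mathcal{T}_n(\Lambda)$.

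The second step is to compare the uniform measures on $\mathcal{T}_n(\Lambda)$ and on $\mathcal{H}_n(\Lambda)$. Theorem \ref{thm_muller-transitive} asserts $t_n(\Lambda)\sim h_n(\Lambda)$, so writing $\mathbb{P}_n^{\mathrm{trans}}$ for the uniform probability on $\mathcal{T}_n(\Lambda)$, the elementary computation
\[
\left|\mathbb{P}_n^{\mathrm{trans}}(A) - \mathbb{P}_n^{\Hom}(A)\right| \;\leq\; \left(1 - \frac{t_n(\Lambda)}{h_n(\Lambda)}\right) + \frac{h_n(\Lambda)-t_n(\Lambda)}{h_n(\Lambda)}
\]
for any event $A\subset \mathcal{H}_n(\Lambda)$ shows that the total variation distance between these two measures tends to $0$ as $n\to\infty$. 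Since convergence in distribution of a fixed sequence of random variables under $\mathbb{P}_n^{\Hom}$ is preserved when the underlying measure is perturbed in total variation by $o(1)$, Corollary \ref{cor_poissononhom} and Corollary \ref{cor_cltonhom} imply the same limit theorems under $\mathbb{P}_n^{\mathrm{trans}}$, which by the first step yields the theorem.

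The only genuinely non-trivial input is the asymptotic transitivity $t_n(\Lambda)\sim h_n(\Lambda)$ of Theorem \ref{thm_muller-transitive}; everything else is bookkeeping around the correspondence between transitive permutation representations and finite-index subgroups. Consequently the proof should fit comfortably into a few short paragraphs, the main points being (i) the expression of $Z_K$ via fixed points of a transitive representative, and (ii) the total variation bound above.
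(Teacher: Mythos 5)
Your proposal is correct and follows essentially the same approach as the paper: reduce to the uniform measure on $\Hom(\Lambda, S_n)$ via the transitive/subgroup correspondence and use $t_n(\Lambda)\sim h_n(\Lambda)$ (Theorem \ref{thm_muller-transitive}) to transfer the limit theorems of Corollaries \ref{cor_poissononhom} and \ref{cor_cltonhom}. The only cosmetic difference is that you package the comparison between $\mathbb{P}_n^{\mathrm{trans}}$ and $\mathbb{P}_n^{\Hom}$ as a total variation bound, whereas the paper sandwiches the relevant counting ratio directly; both are immediate consequences of the same asymptotic.
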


\begin{proof}
We will again write $\Lambda = C_{p_1}*\cdots*C_{p_m}$. 

We start with item \ref{poisson_for_randsubgroup}. Using the $(n-1)!$-to-$1$ correspondence between transitive permutation representations $\Gamma\to S_n$ and index $n$ subgroups of $\Gamma$ (i.e. Proposition \ref{subgp_trans}), what we need to prove is that for all $A\subset\mathbb{N}^r$,
$$
\frac{\left|\left\{\varphi\in\Hom(\Lambda,S_n); \begin{array}{c}
\varphi \text{ transitive} \\ (Z_{K_1},\ldots,Z_{K_r})(\varphi) \in A 
\end{array} \right\}\right|}{t_n(\Lambda)} \stackrel{n\to\infty}{\longrightarrow} \mathbb{P}[ (Z_{K_1}^\infty,\ldots,Z_{K_r}^\infty) \in A ].
$$
We have 
\begin{multline*}
\frac{\left|\left\{\varphi\in\Hom(\Lambda,S_n); \begin{array}{c}
\varphi \text{ transitive} \\ (Z_{K_1},\ldots,Z_{K_r})(\varphi) \in A 
\end{array} \right\}\right|}{t_n(\Lambda)} \\
\leq
\frac{\left|\left\{\varphi\in\Hom(\Lambda,S_n);  (Z_{K_1},\ldots,Z_{K_r})(\varphi) \in A 
\right\}\right|}{h_n(\Lambda)} \cdot \frac{h_n(\Lambda)}{t_n(\Lambda)} \\
 \stackrel{n\to\infty}{\longrightarrow} \mathbb{P}[ (Z_{K_1}^\infty,\ldots,Z_{K_r}^\infty) \in A ],
\end{multline*}
by Corollary \ref{cor_poissononhom} and Theorem \ref{thm_muller-transitive} (note that this uses that  $\sum_{i=1}^m \frac{1}{p_i} < m-1$).

Likewise,
\begin{multline*}
\frac{\left|\left\{\varphi\in\Hom(\Lambda,S_n); \begin{array}{c}
\varphi \text{ transitive} \\ (Z_{K_1},\ldots,Z_{K_r})(\varphi) \in A 
\end{array} \right\}\right|}{t_n(\Lambda)} \\
\geq
\frac{\left|\left\{\varphi\in\Hom(\Lambda,S_n);  (Z_{K_1},\ldots,Z_{K_r})(\varphi) \in A 
\right\}\right|}{h_n(\Lambda)} \cdot \frac{h_n(\Lambda)}{t_n(\Lambda)} + \frac{h_n(\Lambda) - t_n(\Lambda)}{t_n(\Lambda)} \\
 \stackrel{n\to\infty}{\longrightarrow} \mathbb{P}[ (Z_{K_1}^\infty,\ldots,Z_{K_r}^\infty) \in A ],
\end{multline*}
again by Corollary \ref{cor_poissononhom} and Theorem \ref{thm_muller-transitive}, which proves the result.

The proof of item \ref{clt_for_ransubgroup} is the same, except that we need to replace Corollary \ref{cor_poissononhom} by point \ref{clt_finite_order} of Corollary \ref{cor_cltonhom}.
\end{proof}

Our next goal is to use this to prove convergence of a random index $n$ subgroup of $C_{p_1}*\cdots*C_{p_m}$:

\begin{corollary}\label{cor_BSconvC*...*C}
Let $p_1,\ldots,p_m$ be such that $\sum_{i=1}^m \frac{1}{p_i}<m-1$. Then the IRS
$$
\mu_n = \frac{1}{a_n(C_{p_1}*\cdots * C_{p_m})}  \sum_{\substack{ H\;<\; C_{p_1}*\cdots * C_{p_m} \\ [C_{p_1}*\cdots * C_{p_m}:H] \; = \; n}} \delta_H
$$
converges to $\delta_{\{e\}}\in\mathrm{IRS}(C_{p_1}*\cdots * C_{p_m})$ as $n\to \infty$.
\end{corollary}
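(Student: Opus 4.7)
The plan is to apply Lemma \ref{lem_IRSfixedpoints} with $\Gamma=C_{p_1}*\cdots*C_{p_m}$ and $N=\{e\}$. Since $\{e\}$ is normal, and since by the permutation interpretation $Z_{\{e\}}(H)=n$ identically (the identity has $n$ fixed points), the condition for $K\subset N$ is automatic. It therefore suffices to show that
\[
\mu_n(Z_K)=o(n)\qquad\text{as }n\to\infty
\]
for every non-trivial conjugacy class $K\subset C_{p_1}*\cdots*C_{p_m}$.

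The first step is to pass from the measure $\mu_n$ on $\mathcal{A}_n(\Gamma)$ to the uniform measure $\mathbb{P}_n^{\Hom}$ on $\Hom(\Gamma,S_n)$. Using the $(n-1)!$-to-$1$ correspondence between transitive homomorphisms and index $n$ subgroups (Proposition \ref{subgp_trans}), one checks that
\[
\mu_n(Z_K)=\frac{1}{t_n(\Gamma)}\sum_{\varphi\in\mathcal{T}_n(\Gamma)}Z_K(\varphi)
\;\leq\;\frac{h_n(\Gamma)}{t_n(\Gamma)}\;\mathbb{E}_n^{\Hom}[Z_K].
\]
By Theorem \ref{thm_muller-transitive} (which uses the hypothesis $\sum_i 1/p_i<m-1$) the prefactor $h_n(\Gamma)/t_n(\Gamma)$ tends to $1$, so it suffices to bound $\mathbb{E}_n^{\Hom}[Z_K]$.

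The second step is to invoke the two fixed-point results already proved on the homomorphism side. If $K$ consists of infinite-order elements, then the $k_{ij}=1$ case of Theorem \ref{thm_factorialmomentsY} (summed over the finitely many $K$-graphs appearing in the decomposition \eqref{eq_decompZK}, noting that only the cycle-summands contribute asymptotically) gives that $\mathbb{E}_n^{\Hom}[Z_K]$ converges to a finite constant, hence is $O(1)=o(n)$. If instead $K$ consists of elements of finite order $k\geq 2$, then item \ref{expectation_finite_order} of Corollary \ref{cor_cltonhom} yields $\mathbb{E}_n^{\Hom}[Z_K]\sim n^{1/k}=o(n)$, since $k\geq 2$.

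Combining the two cases we obtain $\mu_n(Z_K)=o(n)$ for every non-trivial conjugacy class $K$, and Lemma \ref{lem_IRSfixedpoints} then delivers $\mu_n\to\delta_{\{e\}}$ in $\mathrm{IRS}(\Gamma)$. The only mildly delicate point is the comparison $\mu_n(Z_K)\leq (1+o(1))\,\mathbb{E}_n^{\Hom}[Z_K]$, which relies on the tightness of the transitive-versus-all-homomorphisms ratio provided by Theorem \ref{thm_muller-transitive}; everything else is a direct bookkeeping consequence of the fixed-point statistics established in the previous subsections.
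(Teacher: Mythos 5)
Your proposal is correct and follows essentially the same route as the paper's proof: bound $\mu_n(Z_K)$ by $\tfrac{h_n}{t_n}\,\mathbb{E}^{\Hom}_n[Z_K]$ via Theorem \ref{thm_muller-transitive}, get $o(n)$ from Theorem \ref{thm_factorialmomentsY} (infinite order) and Corollary \ref{cor_cltonhom}(\ref{expectation_finite_order}) (finite order), and close with Lemma \ref{lem_IRSfixedpoints}. The paper states this in one line; you have simply spelled out the bookkeeping, including the observation that the $K\subset N=\{e\}$ half of Lemma \ref{lem_IRSfixedpoints} is vacuous since only the trivial class lies in $\{e\}$.
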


\begin{proof}
For any non-trivial conjugacy class $K\subset C_{p_1}*\cdots*C_{p_m}$, we have
$$
\mu_n(Z_K) \leq \frac{h_n(C_{p_1}*\cdots*C_{p_m})}{t_n(C_{p_1}*\cdots*C_{p_m})} \cdot \mathbb{E}^{\Hom}[Z_K] = o(n)
$$
as $n\to\infty$, by Theorem \ref{thm_muller-transitive} combined with Theorem \ref{thm_factorialmomentsY} and item  \ref{expectation_finite_order} of Corollary \ref{cor_cltonhom}. So the corollary follows from Lemma \ref{lem_IRSfixedpoints}.
\end{proof}

\subsection{Statistics for $\Gamma_{p_1,\ldots,p_m}$}

Now we are ready to prove our results on the properties of random index $n$ subgroups of $\Gamma_{p_1,\ldots,p_m}$. Let us start with the statistics of the variables $Z_K: \mathcal{A}_n(\Gamma_{p_1,\ldots,p_m})\to \mathbb{N}$. Given a sequence of random variables $X_n,Y_n:\Omega_n\to\mathbb{N}$, we will say $X_n$ and $Y_n$ are \emph{asymptotically independent as $n\to\infty$} if
$$
\lim_{n\to \infty} \mathbb{P}(X_n\in A \text{ and }Y_n\in B) - \mathbb{P}(X_n \in A)\cdot \mathbb{P}(Y_n\in B) = 0 \quad \forall A,B\subset\mathbb{N}.
$$

\begin{thmrep}{\ref*{thm_main3}(a)}
Let $p_1,\ldots,p_m \in \mathbb{N}_{>1}$ such that $\sum_{i=1}^m \frac{1}{p_i}<m-1$. Let $K_1,\ldots,K_r\subset \Gamma_{p_1,\ldots,p_m}$ be non-trivial conjugacy classes of which no pair have a common root.
\begin{enumerate}
\item If for all $g\in K_i$, for all $i=1,\ldots,r$, the image $\Phi_{p_1,\ldots,p_m}(g)$ is either trivial or of infinite order, then, as $n\to\infty$, the random variables $Z_{K_i}(H_n)$, $i=1,\ldots,r$ are asymptotically independent. Moreover,
\begin{itemize}
\item~\label{main3a_inf_order}if $K_i \subset L_{p_1,\ldots,p_m}$ then
$$
\lim_{n\to\infty} \mathbb{P}[Z_{K_i}(H_n) = n] =1
$$
\item and if $K_i \not\subset L_{p_1,\ldots,p_m}$ is the conjugacy class of the $k^{th}$ power of a primitive element then $Z_{K_i}(H_n)$ converges in distribution to a random variable
$$
Z_{K_i}^\infty \sim \sum_{d|k}\; d \cdot X_{1/d},
$$
where $X_{1/d} \sim \mathrm{Poisson}(1/d)$ and $X_1,\ldots,X_{1/k}$ are independent.
\end{itemize}
\item ~\label{main3a_fin_order}If the images of the elements of $K_i$ under $\Phi_{p_1,\ldots,p_m}$ have order $k_i\in\mathbb{N}$ for $i=1,\ldots,r$, then the vector of random variables
$$
\left(\frac{Z_{K_1}(H_n)-n^{1/k_1}}{\sqrt{l_1}\cdot n^{1/2k_1}},\ldots, \frac{Z_{K_r}(H_n)-n^{1/k_r}}{\sqrt{l_r}\cdot n^{1/2k_r}} \right)
$$
converges in distribution to a $\mathcal{N}(0,1)^{\otimes r}$-distributed random variable as $n\to \infty$. Here $l_i\in\mathbb{N}$ is such that $\Phi_{p_1,\ldots,p_m}(K_i)$ is the conjugacy class of $x_{j_i}^{l_i}$, for $i=1,\ldots,r$.
\end{enumerate}
\end{thmrep}

\begin{proof}
Let us write $\Gamma=\Gamma_{p_1,\ldots,p_m}$ and
$$
\mathcal{T}_n(\Gamma) = \{\varphi\in \Hom(\Gamma,S_n);\; \varphi(\Gamma)\curvearrowright \{1,\ldots,n\} \text{ transitively}\}.
$$
The distribution of $Z_{K_i}$ is the same on $\mathcal{T}_n(\Gamma)$ as it is on $\mathcal{A}_n(\Gamma)$. By Theorem \ref{thm_main2}, as $n\to\infty$ a typical element of $\mathcal{T}_n(\Gamma)$ factors through $\Phi_{p_1,\ldots,p_m}$. So the limiting distribution of the $Z_{K_i}$ is the same as that on
$$
\mathcal{T}_n(\Gamma)^\Phi := \{\varphi \in \mathcal{T}_n(\Gamma);\; \varphi \text{ factors through }\Phi_{p_1,\ldots,p_m}\}.
$$
 
Now, let us start with item \ref{main3a_inf_order}. If $K_i\subset L_{p_1,\ldots,p_m} = \ker(\Phi_{p_1,\ldots,p_m})$ then $Z_{K_i}$ is constant and equal to $n$ on $\mathcal{T}_n(\Gamma)^\Phi$. If $K_i \not\subset  L_{p_1,\ldots,p_m}$, then the limiting distribution of $Z_{K_i}$ on $\mathcal{T}_n(\Gamma)^\Phi$ is given by Theorem \ref{thm_statisticsforC*...*C}.\ref{poisson_for_randsubgroup}. Finally, Theorem \ref{thm_statisticsforC*...*C}.\ref{poisson_for_randsubgroup} also gives us the asymptotic independence among the $Z_{K_i}$ for $K_i \not\subset L_{p_1,\ldots,p_m}$ and the independence of the whole set follows from the fact that constant random variables are independent of any other random variable.

For finite order elements (item \ref{main3a_fin_order}), the theorem result follows from Theorem \ref{thm_statisticsforC*...*C}.\ref{clt_for_ransubgroup}.
\end{proof}

Next, we determine the limit of a random index $n$ subgroup of $\Gamma_{p_1,\ldots,p_m}$ as an IRS:
\begin{thmrep}{\ref*{thm_main3}(b)} Let $p_1,\ldots,p_m \in \mathbb{N}_{>1}$ be such that $\sum_{j=1}^m \frac{1}{p_j} < m-1$. Then the IRS 
$$
\mu_n = \frac{1}{a_n(\Gamma_{p_1,\ldots,p_m})} \sum_{\substack{H \;<\; \Gamma_{p_1,\ldots,p_m} \\ [\Gamma_{p_1,\ldots,p_m}:H] \;=\; n}} \delta_H \quad \stackrel{w^*}{\longrightarrow} \quad \delta_{L_{p_1,\ldots,p_m}}
$$
as $n\to\infty$.
\end{thmrep}

\begin{proof}
Write 
$$
\mathcal{A}_n(\Gamma_{p_1,\ldots,p_m}) = \mathcal{A}_{n,1}\sqcup\mathcal{A}_{n,2},
$$
where
$$
\mathcal{A}_{n,1} = \Phi_{p_1,\ldots,p_m}^{-1}\left(\mathcal{A}_n(C_{p_1}*\cdots*C_{p_m})\right) \quad \text{and}\quad \mathcal{A}_n(\Gamma_{p_1,\ldots,p_m}) \setminus \mathcal{A}_{n,1}
$$
If $f:\mathrm{Sub}(\Gamma_{p_1,\ldots,p_m})\to \mathbb{R}$ is a continuous function then
$$
\mu_n(f) = \frac{1}{a_n(\Gamma_{p_1,\ldots,p_m})}\sum_{H\in \mathcal{A}_n(C_{p_1}*\cdots*C_{p_m})} f(\Phi_{p_1,\ldots,p_m}^{-1}(H)) +  \frac{1}{a_n(\Gamma_{p_1,\ldots,p_m})} \sum_{H\in \mathcal{A}_{n,2}} f(H).
$$
Since $f$ is bounded and $\mu_n(\mathcal{A}_{n,2}) \to 0$ (by Theorem \ref{thm_main2}), the second term tends to $0$ as $n\to\infty$. The first term tends to $f(\ker(\Phi_{p_1,\ldots,p_m}))$ by Corollary \ref{cor_BSconvC*...*C}, which proves the theorem.
\end{proof}

Finally, we will determine the limits of the normalised Betti numbers. We have:

\begin{thmrep}{\ref*{thm_main3}(c)} Let $p_1,\ldots,p_m\in\mathbb{N}_{>1}$ be such that $\sum_{i=1}^m \frac{1}{p_i} < m-1$. For every $\varepsilon>0$ it holds that 
$$
\lim_{n\to\infty} \mu_n\left(\left|\frac{b_k(H;\mathbb{R})}{n}- b_k^{(2)}(X_{p_1,\ldots,p_m}^\Phi;\;C_{p_1}*\ldots*C_{p_m})\right| < \varepsilon \right)= 1 \quad \text{for all }k \in \mathbb{N}.
$$
Moreover,
$$
b_k^{(2)}(X_{p_1,\ldots,p_m}^\Phi;\;C_{p_1}*\ldots*C_{p_m}) = \left\{
\begin{array}{ll}
m-1 - \sum\limits_{i=1}^m \frac{1}{p_i} & \text{if } k = 1,2 \\
0 & \text{otherwise}.
\end{array}
\right. 
$$
\end{thmrep}

It follows from the fact that $\mu_n$ converges to $\delta_{L_{p_1,\ldots,p_m}}$ together with Lemma \ref{lem_betticonvergence} that the normalised Betti numbers of a random index $n$ subgroup converge to those of the cover corresponding to $L_{p_1,\ldots,p_m}$. So the only thing that we still have to do is compute these betti numbers, which is the content of the following lemma.

Recall that $\Gamma/L_{p_1,\ldots,p_m} \simeq C_{p_1}*\cdots * C_{p_m}$. We have:
\begin{lemma}\label{lem_l2_betti}
Let $X_{p_1,\ldots,p_m}$ be a classifying space for $\Gamma_{p_1,\ldots,p_m}$ and let  $X_{p_1,\ldots,p_m}^\Phi \to X_{p_1,\ldots,p_m}$ denote the cover corresponding to $L_{p_1,\ldots,p_m} \vartriangleleft \Gamma_{p_1,\ldots,p_m}$. Then
$$
b_k^{(2)}(X_{p_1,\ldots,p_m}^\Phi;\;C_{p_1}*\ldots*C_{p_m}) = \left\{
\begin{array}{ll}
m-1 - \sum\limits_{i=1}^m \frac{1}{p_i} & \text{if } k = 1,2 \\
0 & \text{otherwise}.
\end{array}
\right. 
$$
\end{lemma}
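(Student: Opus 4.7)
The plan is to compute the $\ell^2$-Betti numbers of $X_{p_1,\ldots,p_m}^\Phi$ directly from a cellular chain complex. Since the $\ell^2$-Betti numbers depend only on the pair $(\Gamma_{p_1,\ldots,p_m}, L_{p_1,\ldots,p_m})$ and not on the particular choice of classifying space, I will replace $X_{p_1,\ldots,p_m}$ by the $2$-dimensional presentation complex associated to $\langle x_1,\ldots,x_m \mid x_i^{p_i} = x_{i+1}^{p_{i+1}},\ 1\le i\le m-1\rangle$, which is aspherical (it realises, up to homotopy, the graph of groups with all vertex and edge groups equal to $\mathbb{Z}$ whose fundamental group is $\Gamma_{p_1,\ldots,p_m}$). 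Writing $G = C_{p_1}*\cdots*C_{p_m}$, the chain complex $\mathcal{N}(G)\otimes_{\mathbb{Z}G} C_*(X_{p_1,\ldots,p_m}^\Phi)$ is then concentrated in degrees $0,1,2$ with ranks $1$, $m$, $m-1$, which immediately gives $b_k^{(2)} = 0$ for $k\ge 3$. Since $X_{p_1,\ldots,p_m}^\Phi$ is connected and $G$ is infinite (which follows from $\sum 1/p_i < m-1$), we also have $b_0^{(2)} = 0$, and the vanishing of the Euler characteristic $1-m+(m-1)=0$ forces $b_1^{(2)} = b_2^{(2)}$. Everything therefore reduces to computing $b_2^{(2)} = \dim_{\mathcal{N}(G)} \ker \partial_2$.

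A Fox calculus computation, together with the relation $x_j^{p_j} = 1$ in $G$, will show that $\partial_2$ sends the generator corresponding to the relation $x_i^{p_i} x_{i+1}^{-p_{i+1}}$ to $N_i e_i - N_{i+1} e_{i+1}$, where $N_j = 1 + x_j + \cdots + x_j^{p_j-1}$ is the norm element. A change of variables will then identify $\ker \partial_2$ with the subspace
\[
\Big\{ (u_1,\ldots,u_{m-1}) \in \bigoplus_{i=1}^{m-1} K_i \;:\; u_1-u_2-\cdots-u_{m-1} \in K_m \Big\},
\]
where $K_i \subset \ell^2(G)$ is the kernel of multiplication by $N_i$. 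Since $N_i/p_i$ is the orthogonal projection onto the closed subspace of $\ell^2(G)$ consisting of vectors fixed by $x_i$, which is isomorphic to $\ell^2(G/\langle x_i\rangle)$ and has $\mathcal{N}(G)$-dimension $1/p_i$, I obtain $\dim_{\mathcal{N}(G)} K_i = 1 - 1/p_i$.

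The heart of the argument, and the step I expect to be the main conceptual point, is to show that the evaluation map $\bigoplus_{i=1}^{m-1} K_i \to \ell^2(G)/K_m$, $(u_1,\ldots,u_{m-1}) \mapsto u_1-u_2-\cdots-u_{m-1} \pmod{K_m}$, has dense image. This is equivalent to the density of $K_1 + \cdots + K_m$ in $\ell^2(G)$, and by taking orthogonal complements, to $\bigcap_{i=1}^m K_i^\perp = \{0\}$. Any vector in this intersection is fixed by multiplication by every generator $x_i$, hence $G$-invariant, and therefore zero because $G$ is infinite. Once density is in hand, additivity of $\mathcal{N}(G)$-dimension for Hilbert $\mathcal{N}(G)$-modules will yield
\[
\dim_{\mathcal{N}(G)} \ker \partial_2 = \sum_{i=1}^{m-1}\Big(1 - \tfrac{1}{p_i}\Big) - \tfrac{1}{p_m} = m-1 - \sum_{i=1}^m \tfrac{1}{p_i},
\]
which is the desired value of both $b_1^{(2)}$ and $b_2^{(2)}$. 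The only technical care needed is to fix left/right conventions on $\ell^2(G)$ consistently throughout the Fox calculus and the description of the $K_i$; once this is done, the remaining arguments are standard Hilbert $\mathcal{N}(G)$-module theory.
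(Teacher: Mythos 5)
Your proposal is correct, but it takes a genuinely different route from the paper's. The paper exploits the geometric model $X_{p_1,\ldots,p_m}=\Gamma_{p_1,\ldots,p_m}\backslash(\mathbb{H}^2\times\mathbb{R})$, so that $X_{p_1,\ldots,p_m}^\Phi\cong\mathbb{H}^2\times(\mathbb{R}/\mathbb{Z})$. It then passes to the finite-index torsion-free kernel $\Lambda$ of $C_{p_1}*\cdots*C_{p_m}\to C_{p_1}\times\cdots\times C_{p_m}$, invokes the known $\ell^2$-Betti numbers of $\mathbb{H}^2$ for a Fuchsian lattice, applies the $\ell^2$-K\"unneth formula, and finally uses multiplicativity of $\ell^2$-Betti numbers under finite-index subgroups to return to $C_{p_1}*\cdots*C_{p_m}$. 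Your approach instead works directly at the level of the cellular $\ell^2$-chain complex of a two-dimensional classifying space and is an explicit Hilbert $\mathcal{N}(G)$-module computation: Fox calculus turns $\partial_2$ into the familiar ``norm element'' matrix $(N_i,-N_{i+1})$ over $\mathbb{Z}G$, the trace calculation $\dim_{\mathcal{N}(G)}\ker N_i=1-1/p_i$ gives the constituent dimensions, and the crucial density step $\bigcap_i K_i^\perp=\{0\}$ reduces neatly to ``the only $G$-invariant vector in $\ell^2(G)$ is zero since $G$ is infinite.'' The benefit of your route is that it is self-contained and avoids the product geometry, the K\"unneth theorem, and multiplicativity entirely; the benefit of the paper's route is brevity and conceptual transparency (the answer is visibly $-\chi_{\mathrm{orb}}$ of the base Fuchsian orbifold). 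I verified the kernel identification (after the change of variables $v_1=u_1$, $v_j=u_j-u_{j-1}$ it becomes $\{v\in\bigoplus_{i<m}K_i:\sum v_i\in K_m\}$, matching yours up to harmless signs), the additivity step $\dim\ker\partial_2=\sum_{i<m}(1-1/p_i)-1/p_m$, and the Euler characteristic argument forcing $b_1^{(2)}=b_2^{(2)}$; all check out. The one place you should add a sentence or a reference is the assertion that the presentation $2$-complex of $\langle x_1,\ldots,x_m\mid x_i^{p_i}=x_{i+1}^{p_{i+1}}\rangle$ is aspherical: this is true (e.g.\ because it is homotopy equivalent to the total space of a graph of aspherical spaces with $\pi_1$-injective edge inclusions, or by Lyndon-type asphericity for staggered presentations), but as written ``it realises, up to homotopy, the graph of groups'' is asserting exactly what needs proof, namely that $\pi_2$ vanishes.
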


\begin{proof}
Since the $\ell^2$-Betti numbers do not depend on the choice of classifying space, we identify $\Gamma_{p_1,\ldots,p_m}$ with a lattice in $\mathrm{PSL}(2,\mathbb{R})\times \mathbb{R}$ and set $X_{p_1,\ldots,p_m} = \Gamma_{p_1.\ldots,p_m}\backslash \Big(\mathbb{H}^2\times \mathbb{R}\Big)$. This gives us an identification
$$
X_{p_1,\ldots,p_m}^\Phi =  \mathbb{H}^2\times \mathbb{R}/\mathbb{Z} .
$$
The action of $C_{p_1} *\cdots *C_{p_m}$ on $X_{p_1,\ldots,p_m}$ preserves the factors. The action on $\mathbb{R}/\mathbb{Z}$ is through the quotient $C_{p_1} *\cdots *C_{p_m} \longrightarrow C_{p_1} \times \cdots \times C_{p_m}$. The kernel $\Lambda$ of this quotient is a free group that acts trivially on $\mathbb{R}/\mathbb{Z}$. 

Because $X_{p_1,\ldots,p_m}$ is three-dimensional and $\Lambda$ is infinite,
$$b_k^{(2)}(X_{p_1,\ldots,p_m}^\Phi;\;\Lambda)=0 \quad\text{for } k\in \{0,4,5,\ldots\}
$$ 
(see for instance \cite[Theorem 1.35(8)]{Lueck_Book} or \cite[Theorem 3.18(ii)]{Kammeyer}). Moreover, since 
$$
 b_k^{(2)}(\mathbb{H}^2;\;\Lambda) = \left\{\begin{array}{ll}
 -\chi(\Lambda\backslash\mathbb{H}^2) & \text{if }k=1 \\
 0 & \text{otherwise}
 \end{array}
 \right.
$$
(see for instance \cite[Example 3.16]{Lueck_Book} or \cite[Exercise 3.3.1]{Kammeyer}) where $\chi$ denotes Euler characteristic. So, the K\"unneth formula (see for instance \cite[Theorem 1.35(4)]{Lueck_Book} or \cite[Theorem 3.18(iii)]{Kammeyer}) gives us that
$$
b_1^{(2)}(X_{p_1,\ldots,p_m}^\Phi;\;\Lambda) = b_2^{(2)}(X_{p_1,\ldots,p_m}^\Phi;\;\Lambda) = -\chi(\Lambda\backslash\mathbb{H}^2) \text{ and }
b_3^{(2)}(X_{p_1,\ldots,p_m}^\Phi;\;\Lambda) =0. 
$$
Since both orbifold Euler characteristic and $\ell^2$-Betti numbers are multiplicative with respect to finite index subgroups (see for instance \cite[Theorem 1.35(9)]{Lueck_Book} or \cite[Theorem 3.18(iv)]{Kammeyer} for the latter), the lemma follows.

\end{proof}

\begin{proof}[Proof of Theorem \ref*{thm_main3}(c)]
This is now direct from Theorem \ref*{thm_main3}(b) and Lemmas \ref{lem_l2_betti} and \ref{lem_betticonvergence}.
\end{proof}

\subsection{Random index $n$ subgroups of Fuchsian groups}\label{sec_fuchsian}

In this last section we discuss applications of our results to random subgroups of Fuchsian groups. We have:

\begin{thmrep}{\ref*{thm_main4}}
Let $\Lambda$ be a non-cocompact Fuchsian group of finite covolume.
Moreover, let $G_n<\Lambda$ denote an index $n$ subgroup, chosen uniformly at random. 
\begin{itemize}
\item[(a)] 
\begin{enumerate}
\item If $K_1,\ldots, K_r \subset \Lambda$ are conjugacy classes of infinite order elements of which no pair have a common root. Then, as $n\to\infty$, the vector of random variables
$$
(Z_{K_1}(G_n),\ldots,Z_{K_r}(G_n))
$$
converges jointly in distribution to a vector 
$$
(Z_{K_1}^\infty,\ldots,Z_{K_r}^\infty):\Omega \to \mathbb{N}^r
$$
of independent random variables, such that if $K_i$ is the conjugacy class of a $k^{th}$ power of a primitive element then
$$
Z_{K_i}^\infty \sim \sum_{d|k}\; d \cdot X_{1/d},
$$
where $X_{1/d} \sim \mathrm{Poisson}(1/d)$ and $X_1,\ldots,X_{1/k}$ are independent.
\item \cite[Lemma 4]{MullerPuchta3} If $K_1,\ldots,K_r\subset \Lambda$ are non-trivial conjugacy classes of which no pair have a common root, whose elements have orders $k_1,\ldots,k_r\in\mathbb{N}$ respectively, then the vector of random variables
$$
\left(\frac{Z_{K_1}(G_n)-n^{1/k_1}}{\sqrt{l_1}\cdot n^{1/2k_1}},\ldots, \frac{Z_{K_r}(G_n)-n^{1/k_r}}{\sqrt{l_r}\cdot n^{1/2k_r}} \right)
$$
converges in distribution to a $\mathcal{N}(0,1)^{\otimes r}$-distributed random variable as $n\to \infty$. Here $l_i\in\mathbb{N}$ is such that $K_i$ is the conjugacy class of $x_{j_i}^{l_i}$, for $i=1,\ldots,r$.
\end{enumerate}
\item[(b)] As $n\to\infty$, $G_n$ converges to the trivial group as an IRS.
\end{itemize}
\end{thmrep}

\begin{proof}[Proof sketch] First of all note that non-cocompact Fuchsian group of finite covolume are exactly groups of the form $F_r*C_{p_1}*\cdots * C_{p_m}$, with $-r+m-1 - \sum_{i=1}^m 1/p_i < 0$, where $F_r$ denotes the free groups on $r$ generators.

If $r=0$, (a) and (b) are the content of Theorem \ref{thm_statisticsforC*...*C} and Corollary \ref{cor_BSconvC*...*C} respectively. If $r>0$, the proof of Theorem \ref{thm_statisticsforC*...*C}.\ref{poisson_for_randsubgroup} needs to be adapted slightly: $r$ of the generators are now allowed to have any permutation of their image and not just permutations of a fixed order. With exactly the same strategy (and slightly easier computations, which we leave to the reader) the analogue of Theorem \ref{thm_statisticsforC*...*C}.\ref{poisson_for_randsubgroup} can now be proved (if $m=0$, much better bounds are in fact available \cite{DJPP,Nica}). In order to prove the analogue of Corollary \ref{cor_BSconvC*...*C}, the only new ingredient that is needed is that $t_n(\Gamma)/h_n(\Gamma) \to  1$. When $m=0$, this is a direct consequence of Dixon's theorem \cite{Dixon}. For the remaining cases, the proof has not been written down, but a similar strategy does the trick. Indeed, the results by by Volynets--Wilf (Theorem \ref{hn_cyclic}) together with Stirling's approximation that for $p>1$,
\[
h_n(F_r*C_{p_1}*\cdots*C_{p_m}) \sim B \cdot n^{r/2} \cdot \exp\left(\sum_{i=1}^m \sum_{d|p_i,d<p_i} \frac{1}{d}n^{d/p_i}\right) \cdot \left(\frac{n}{e}\right)^{n\left(r + \sum_{i=1}^m 1-\frac{1}{p_i}\right)},
\]
as $n\to\infty$, where $B$ is a constant depending on $(r,p_1,\ldots,p_m)$. We have
\[
1 - \frac{t_n(\Lambda)}{h_n(\Lambda)} = \sum_{k=1}^{n-1} \binom{n-1}{k-1}\frac{t_k(\Lambda) h_{n-k}(\Lambda)}{h_n(\Lambda)}
\]
(see for instance \cite[Lemma 1.1.3]{LubotzkySegal}). Combining the two, we get that there exists a constant $A>0$ such that
\[
1 - \frac{t_n(\Lambda)}{h_n(\Lambda)} \leq A \sum_{k=1}^{n-1} \binom{n}{k}^{1 -r - m - \sum\limits_{i=1}^m \frac{1}{p_i}} \exp\left(\sum_{i=1}^m\sum_{\substack{d|p_i, \\ d<p_i}}\frac{(n-k)^{d/p_i}+k^{d/p_i} -n^{d/p_i}}{d} \right) \longrightarrow 0,
\]
as $n\to\infty$, which settles the remaining cases.
\end{proof}

\appendix
\section{An elementary approach to the subgroup growth of $\Gamma_{p_1,\ldots,p_m}$} \label{app_elt_approach}

In this appendix we sketch an alternative route to Theorems \ref{thm_main1} and \ref{thm_main2}. The proof sketched below, when worked out in full, is longer than the proof we give above. But, it's more self contained and it also leads to a closed formula for $h_n(\Gamma_{p_1,\ldots,p_m})$ and hence for $a_n(\Gamma_{p_1,\ldots,p_m})$, which cannot be obtained from the proof above. On the other hand, while the large $n$ asymptotics can be derived directly from these sequences without relying on the results by M\"uller \cite{Muller_FreeProducts} on the subgroup growth of free products, this computation uses very similar methods to his (in particular the asymptotic behaviour of the coefficients of certain analytic functions due to M\"uller \cite{Muller}, Volynets \cite{Volynets} and Wilf \cite{Wilf}).

\subsection{A Closed Formula}

Our first objective is now to derive a closed formula for $h_n(\Gamma_{p_1,\ldots,p_m})$. We have:
\begin{proposition}\label{hn_closed_formula}
Let $n,p_1,\ldots,p_m \in \mathbb{N}$. Then
$$
h_n(\Gamma_{p_1,\ldots,p_m}) = n! \sum_{\substack{r_1,\ldots,r_n\geq 0 \\ \text{s.t. }\sum_l r_l\cdot l = n}} \; \prod_{\substack{1\leq l \leq n \\ \text{s.t. } r_l >0 }} (r_l!\cdot l^{r_l})^{m-1} \prod_{i=1}^m \sum_{k\in \mathcal{K}(p_i,l,r_l)} \prod_{j=1}^{p_i} \frac{1}{(l\cdot j)^{k_j} k_j!}
$$
where
$$\mathcal{K}(p,l,r) = \left\{k \in \mathbb{N}^p \left|\ \sum_{i=1}^p k_i\cdot i = r \text{ and } k_i = 0 \text{ whenever } \gcd(i\cdot l, p) \neq i  \right. \right\}.$$
\end{proposition}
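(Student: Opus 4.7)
The plan is to sum over the common value $\tau := \sigma_1^{p_1} = \cdots = \sigma_m^{p_m} \in S_n$, since a homomorphism $\Gamma_{p_1,\ldots,p_m} \to S_n$ is precisely a tuple $(\sigma_1,\ldots,\sigma_m)\in S_n^m$ for which this identity holds. Fixing $\tau$, the choices of the $\sigma_i$ are independent, so
\[
h_n(\Gamma_{p_1,\ldots,p_m}) \;=\; \sum_{\tau\in S_n}\prod_{i=1}^m N_{p_i}(\tau), \qquad N_p(\tau) := |\{\sigma\in S_n : \sigma^p=\tau\}|.
\]
Since $N_p(\tau)$ depends only on the cycle type $r = (r_1,\ldots,r_n)$ of $\tau$, I would group $\tau$ by cycle type, using that there are $n!/\prod_l l^{r_l}r_l!$ permutations of a given type. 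Thus everything reduces to an explicit formula for $N_p(\tau)$ in terms of $r$.

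The key step is to compute $N_p$ cycle by cycle. I would use the classical fact that a $\sigma$-cycle of length $d$ breaks under $\sigma^p$ into $\gcd(d,p)$ cycles of length $d/\gcd(d,p)$. Setting $d = lj$ and demanding that $\sigma^p$ produce $\tau$-cycles of length $l$ forces $j \mid p$ and $\gcd(lj,p) = j$, which are precisely the conditions defining $\mathcal{K}(p,l,r_l)$, and each such $\sigma$-cycle contributes $j$ $\tau$-cycles of length $l$. So each tuple $(k_j)_j \in \mathcal{K}(p,l,r_l)$ specifies a possible $\sigma$-cycle structure over the $r_l$ $\tau$-cycles of length $l$, and I would count in two stages: first partition the $r_l$ labelled $\tau$-cycles into $k_j$ unordered blocks of size $j$, contributing a multinomial factor $r_l!/\prod_j(j!)^{k_j}k_j!$; then, for each such block, count the $\sigma$-cycles of length $lj$ whose $p$-th power restricts to the given $j$ cycles.

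I expect the last sub-count to equal $(j-1)!\,l^{j-1}$, and this is where I anticipate the main obstacle. To prove it I plan to write $\sigma$ in sequence form $(c_0,c_1,\ldots,c_{lj-1})$ and use the relation $c_{i+p}=\tau(c_i)$ together with $\gcd(lj,p)=j$ to observe that the $lj$ positions split into $j$ cosets of $\langle p\rangle$ modulo $lj$, each of length $l$ and each filled by exactly one of the given $\tau$-cycles; choosing $c_0$ freely, there are $(j-1)!$ ways to assign the remaining $\tau$-cycles to cosets and $l^{j-1}$ choices of starting offset within those cosets, and dividing by the $lj$ rotations of a single cycle yields $(j-1)!\,l^{j-1}$. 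Combining the two sub-counts and using $\sum_j j k_j = r_l$ to simplify $\prod_j l^{(j-1)k_j} = l^{r_l-\sum_j k_j}$ gives the per-$l$ factor $r_l!\,l^{r_l}\sum_{k\in\mathcal{K}(p,l,r_l)}\prod_j \tfrac{1}{(lj)^{k_j}k_j!}$, hence
\[
N_p(\tau) \;=\; \prod_l\, r_l!\,l^{r_l}\sum_{k\in\mathcal{K}(p,l,r_l)}\prod_j \frac{1}{(lj)^{k_j}k_j!}.
\]
Finally, multiplying by the permutation-count $n!/\prod_l l^{r_l}r_l!$ and taking the product $\prod_{i=1}^m N_{p_i}(\tau)$, the $m$ copies of $\prod_l r_l!\,l^{r_l}$ absorb one copy of $\prod_l l^{r_l}r_l!$, leaving the exponent $m-1$ stated in the proposition.
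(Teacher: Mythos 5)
Your proposal is correct and follows essentially the same route as the paper: both sum over the common value $\tau=\sigma_1^{p_1}=\cdots=\sigma_m^{p_m}$, group $\tau$ by cycle type using $|K(1^{r_1}\cdots n^{r_n})|=n!/\prod_l l^{r_l}r_l!$, and then substitute the formula $N_p(\tau)=\prod_l r_l!\,l^{r_l}\sum_{k\in\mathcal{K}(p,l,r_l)}\prod_j\tfrac{1}{(lj)^{k_j}k_j!}$ for the number of $p$-th roots. The only departure is that the paper simply cites Pavlov's Proposition~\ref{mth_roots} for that count, whereas you re-derive it by the coset-of-$\langle p\rangle$ argument (which is correct and checks out, including the per-block count $(j-1)!\,l^{j-1}$ and the simplification using $\sum_j jk_j=r_l$), so your version is a bit more self-contained at the cost of reproving a classical lemma.
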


The main ingredient for the formula above is the count of the number of $m$\ts{th} roots of a given permutation $\pi\in S_n$ -- i.e. the number 
$$ N_m(\pi) = \left|\left\{ \sigma\in S_n;\; \sigma^m = \pi \right\}\right|. $$
Note that this number only depends on the conjugacy class of $\pi$. The computation of $N_m(\pi)$ is a classical problem, that to the best of our knowledge has been first worked out by Pavlov \cite{Pavlov}.

Let us first introduce some notation. Recall that the conjugacy class of a permutation $\pi\in S_n$ is determined by its \emph{cycle type} -- the unordered partition of $n$ given by the lengths of the cycles in a disjoint cycle decomposition of $\pi$. In what follows the notation $1^{r_1}2^{r_2}\cdots n^{r_n}$ will denote the partition of $n$ that has $r_1$ parts of size $1$, $r_2$ parts of size $2$, et cetera. $K(1^{r_1}2^{r_2}\cdots n^{r_n}) \subset S_n$ will denote the corresponding conjugacy class. In this notation, we will often omit the sizes of which there are $0$ parts and write $i$ for $i^1$.

\begin{proposition}[Pavlov \cite{Pavlov}]\label{mth_roots} Let $m,n\in\mathbb{N}$ and $\pi \in K(1^{r_1}2^{r_2}\cdots n^{r_n}) \subset S_n$. Then
$$ N_m(\pi) = \prod_{\substack{1\leq l \leq n \\ \text{s.t. }r_l>0}} r_l!\; l^{r_l} \sum_{k\in \mathcal{K}(m,l,r_l)} \prod_{i=1}^m \frac{1}{(l \cdot i)^{k_i}k_i!}$$
where $\mathcal{K}(m,l,r)$ is as in Proposition \ref{hn_closed_formula}.
\end{proposition}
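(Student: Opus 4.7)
The plan is to exploit how cycle structure transforms under the $m$-th power map. First I would recall the elementary observation that if $\sigma \in S_n$ has a cycle $C$ of length $c$, then $\sigma^m$ restricted to the support of $C$ decomposes into $\gcd(c,m)$ cycles each of length $c/\gcd(c,m)$. Consequently, if $\sigma^m = \pi$ has cycle type $1^{r_1} 2^{r_2}\cdots n^{r_n}$, then each cycle of $\sigma$ of some length $c = l \cdot i$ contributes $i$ cycles of length $l$ to $\pi$, where the constraint $c/\gcd(c,m) = l$ is equivalent to $\gcd(l\cdot i, m) = i$ -- precisely the divisibility condition defining $\mathcal{K}(m,l,r)$. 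Writing $k_i$ for the number of such cycles of length $l\cdot i$ in $\sigma$, we get $\sum_i i \cdot k_i = r_l$, so that $k \in \mathcal{K}(m,l,r_l)$, and the problem of counting $\sigma$'s with $\sigma^m = \pi$ factorises over the distinct cycle lengths $l$ appearing in $\pi$.

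For a fixed $l$ with $r_l > 0$ and a fixed $k \in \mathcal{K}(m,l,r_l)$, I would count in two steps. Step (a) partitions the $r_l$ labelled $l$-cycles of $\pi$ into an unordered family with $k_i$ groups of size $i$ for each $i$; the number of such partitions is $r_l! / \prod_i k_i! (i!)^{k_i}$. Step (b) counts, for a fixed unordered group of $i$ given $l$-cycles $C_1,\ldots,C_i$, the number of $l\cdot i$-cycles of $\sigma$ whose $m$-th power equals the product $C_1 \cdots C_i$. This is the main combinatorial point, and the cleanest approach I see is to first count linear traversals $(v_0, v_1, \ldots, v_{l\cdot i - 1})$ of the combined support and then divide by $l\cdot i$ for the cyclic symmetry. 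The orbits of adding $m$ modulo $l\cdot i$ are the $i$ cosets of $\langle i \rangle$ in $\mathbb{Z}/l\cdot i\mathbb{Z}$, each of length $l$; one assigns each coset to one of the $C_j$'s ($i!$ choices) and then, since $\gcd(m/i, l) = 1$, within each coset the bijection of the $l$ positions to the support of the assigned $C_j$ is rigid modulo a single base-point choice ($l$ choices), producing $i! \cdot l^i$ traversals, hence $(i-1)! \cdot l^{i-1}$ $l\cdot i$-cycles.

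Combining steps (a) and (b) for a single $l$ yields the total
\[
\frac{r_l!}{\prod_i k_i! (i!)^{k_i}} \prod_i \bigl[(i-1)! \cdot l^{i-1}\bigr]^{k_i} = r_l! \prod_i \frac{l^{(i-1) k_i}}{k_i! \cdot i^{k_i}}.
\]
Using $r_l = \sum_i i\, k_i$ we rewrite $\sum_i (i-1)k_i = r_l - \sum_i k_i$, so the right-hand side becomes $r_l! \cdot l^{r_l} \prod_i 1/\bigl((l\cdot i)^{k_i} k_i!\bigr)$, which is exactly the inner summand in the claimed formula. Summing over $k \in \mathcal{K}(m,l,r_l)$ and taking the product over $l$ with $r_l > 0$ gives the result.

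The delicate part will be step (b): one must carefully separate the cyclic symmetry of an $l\cdot i$-cycle (quotiented out by dividing by $l \cdot i$ at the end) from the $i$ independent base-point choices inside each coset. The divisibility condition $\gcd(l\cdot i, m) = i$ is exactly what makes the intra-coset shift act as a cyclic $l$-cycle rigidly tied to the assigned $l$-cycle of $\pi$; if it failed, the orbit structure would collapse and the count would vanish, which is the combinatorial reason $\mathcal{K}(m, l, r)$ enforces it.
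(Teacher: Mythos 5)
Your proof is correct. Note, however, that the paper does not provide its own proof of this proposition; it is quoted from Pavlov's paper with attribution, so there is nothing internal to compare against. Your argument is a clean and self-contained derivation.

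To elaborate briefly: you correctly reduce to the decomposition of cycle lengths, correctly identify $\gcd(l\cdot i, m)=i$ as the condition under which an $(l\cdot i)$-cycle of $\sigma$ produces exactly $i$ cycles of length $l$ in $\sigma^m$, and the factorisation of the count over distinct $l$ is sound since different cycle lengths of $\pi$ have disjoint supports. Step (a) is the standard count $r_l!/\prod_i k_i!(i!)^{k_i}$ of set partitions with prescribed block-size multiplicities. Step (b) is the heart of the matter: writing $m = i\,m'$ with $\gcd(m',l)=1$ (which follows from $\gcd(l\cdot i, m)=i$), the shift-by-$m$ map on positions acts within each of the $i$ cosets of $\langle i\rangle$ in $\mathbb{Z}/(l\cdot i)\mathbb{Z}$ as a single $l$-cycle, so the $i!$ coset-to-cycle assignments and the $l$ equivariant bijections per coset give $i!\,l^i$ linear traversals, which you correctly divide by $l\cdot i$ to get $(i-1)!\,l^{i-1}$ distinct $(l\cdot i)$-cycles. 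Your algebraic simplification using $\sum_i (i-1)k_i = r_l - \sum_i k_i$ then yields exactly the inner summand $r_l!\,l^{r_l}\prod_i \bigl((l\cdot i)^{k_i}k_i!\bigr)^{-1}$. The extension of the inner product from the divisibility-constrained index set to all $i\in\{1,\ldots,m\}$ is harmless since the factors with $k_i=0$ equal $1$, and the constraint $i\mid m$ keeps all indices within range.
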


Note that there may be an $l$ such that $r_l>0$ and $\mathcal{K}(m,l,r_l)=\emptyset$. In this case, $N_m(\pi)=0$.

\begin{proof}[Proof of Proposition \ref{hn_closed_formula}]
Given a conjugacy class $K\subset S_n$, we write $N_m(K)$ for the number of roots of an element $\pi\in K$. We have
$$
h_n(\Gamma_{p_1.\ldots, p_m}) = \sum_{\substack{K\subset S_n \\ \text{a conjugacy class}}} |K| \cdot N_{p_1}(K) \cdots N_{p_m}(K).
$$
Using Proposition \ref{mth_roots} and the fact that $|K(1^{r_1}\cdots n^{r_n})| = n!/\prod_{i=1}^n i^{r_i} r_i!$ gives the formula.
\end{proof}

\subsection{Asymptotics}

Theorems \ref{thm_main1} and \ref{thm_main2} can now be derived as follows. 

First of all, it turns out that the sum in  Proposition \ref{hn_closed_formula} is dominated by a single term, namely the term corresponding to 
$$
(r_1,r_2,\ldots,r_n) = (n,0,\ldots,0).
$$
In order to prove this, we write 
$$
\tau_{p,l,r} = \sum_{k\in \mathcal{K}(p,l,r)} \prod_{j=1}^p \frac{1}{(l\cdot j)^{k_j}k_j!}
$$
so that
\begin{equation}\label{eq_hn_tau}
h_n(\Gamma_{p_1,\ldots,p_m}) = n! \sum_{\substack{r_1,\ldots,r_n \geq 0 \\ \text{s.t. } \sum_l r_l\cdot l = n}} \; \prod_{\substack{1\leq l \leq n \\ \text{s.t. } r_l>0}} (r_l! l^{r_l})^{m-1} \prod_{i=1}^m \tau_{p_i,l,r_l}
\end{equation}
by Proposition \ref{hn_closed_formula}. The crucial bound is now:
\begin{lemma}\label{tau_bounds}
Let $p, l,r \in \mathbb{N}$. Then
	$$\tau_{p,l,r} \leq  \left(\frac{1}{r \cdot l}\right)^{\frac{r}{p}} \cdot 
	\exp \left( \sum_{i|p} \frac{(r \cdot l)^{i/p}}{i\cdot l} \right).$$
\end{lemma}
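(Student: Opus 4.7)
The plan is to reinterpret $\tau_{p,l,r}$ as a coefficient of a generating function and then apply the standard ``saddle point by hand'' trick: a Taylor coefficient of a power series with non-negative coefficients is bounded above by the value of the series at any positive real divided by the appropriate power of that real.

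First, I would observe that if we drop the constraint $\sum_{i} i \, k_i = r$ and simply sum over all tuples $(k_j)_{j}$ supported on indices $j$ with $\gcd(jl,p)=j$, the weighted sum factorizes as a product of exponentials. More precisely, define
\[
f_{p,l}(x) \;=\; \exp\!\left( \sum_{\substack{1\leq j \leq p \\ \gcd(jl,p)=j}} \frac{x^{j}}{l\,j} \right).
\]
Expanding the exponential and collecting by the total weight $\sum_i i\,k_i$, one checks directly that
\[
f_{p,l}(x) \;=\; \sum_{r\geq 0} \tau_{p,l,r}\, x^{r}.
\]
In particular, $\tau_{p,l,r} = [x^{r}]\, f_{p,l}(x)$, and since every coefficient of $f_{p,l}$ is non-negative, for every $x>0$ we have the elementary bound $\tau_{p,l,r} \leq f_{p,l}(x)/x^{r}$.

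Next, I would substitute the distinguished value $x = (rl)^{1/p}$. With this choice, $x^{r} = (rl)^{r/p}$, which produces the prefactor $(rl)^{-r/p}$ in the claimed inequality. For the exponent one has $x^{j}=(rl)^{j/p}$, giving
\[
\log f_{p,l}(x) \;=\; \sum_{\substack{j : \gcd(jl,p)=j}} \frac{(rl)^{j/p}}{l\,j}.
\]
The final step is to note that any $j$ satisfying $\gcd(jl,p)=j$ must in particular divide $p$, so the index set over which this sum runs is a subset of the divisors of $p$. Since all summands are positive, enlarging the index set to all $i \mid p$ only increases the sum, yielding
\[
\log f_{p,l}\!\left((rl)^{1/p}\right) \;\leq\; \sum_{i \mid p} \frac{(rl)^{i/p}}{l\,i}.
\]
Combining the two displayed inequalities gives exactly the bound of the lemma.

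I do not anticipate a serious obstacle: all the work is in identifying the correct generating function and the optimal substitution. The substitution $x=(rl)^{1/p}$ is not genuinely optimal (solving $\sum_{j}x^{j}/l=r$ would be) but it is both explicit and good enough for the stated estimate, and the waste is harmlessly absorbed when we pass from the set $\{j:\gcd(jl,p)=j\}$ to the full divisor set $\{i:i\mid p\}$.
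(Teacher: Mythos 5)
Your proposal is correct and takes essentially the same route as the paper: both identify $\tau_{p,l,r}$ as the $x^r$-coefficient of $\exp\bigl(\sum_{j:\gcd(jl,p)=j} x^j/(lj)\bigr)$, invoke the nonnegativity of coefficients to bound the coefficient by $F(x_0)/x_0^r$, and substitute $x_0=(rl)^{1/p}$. Your explicit remark that the index set $\{j:\gcd(jl,p)=j\}$ sits inside the divisors of $p$ (and that enlarging only helps) is a detail the paper's sketch leaves implicit but is needed to reach the stated form of the bound.
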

\begin{proof}[Proof sketch]
We define a generating function:
	$$
	F_{p,l}(x) = \sum_{r=0}^{\infty} \tau_{p,l,r} x^r.
	$$
A direct computation gives us that
 $$F_{p,l}(x)= \prod_{i \in I_{p,l}} \exp\left(\frac{x^i}{i\cdot l}\right),$$ where $I_{p,l} = \{i \leq p \mid \gcd(i\cdot l, p)= i\}.$	
The fact that $F_{p,l}$ has non-negative coefficients implies that
$$
 \tau_{p,l,r} \leq \frac{F(x_0)}{x_0^r}
$$
for all $x_0\in (0,\infty)$. Filling this in for $x_0 = (r\cdot l)^{1/p}$ leads to the bound.
\end{proof}

Using this lemma, and the asymptotic behaviour of $\tau_{p,1,n}$ which follows from results due to M\"uller \cite{Muller}, Volynets \cite{Volynets} and Wilf \cite{Wilf}, one obtains that, the term in Proposition \ref{hn_closed_formula} we claim is dominant as $n\to\infty$  is indeed dominant.

This now first of all proves Theorem \ref{thm_main2}, because, 
the term that determines the asymptotic corresponds to maps 
$$
\varphi: \Gamma_{p_1,\ldots,p_m} = \langle x_1,\ldots x_m|\;x_1^{p_1} = \ldots = x_m^{p_m}\rangle \longrightarrow S_n
$$
such that $\varphi(x_i^{p_i})$ is the identity element in $S_n$. These are exactly the maps that factor through $\Phi_{p_1,\ldots,p_m}$.

We also obtain an asymptotic equivalent for $h_n(\Gamma_{p_1,\ldots,p_m})$ (which can also be derived from Corollary \ref{cor_hn_tn}). This, together with Lemmas \ref{subgp_trans} and \ref{lem_rapidgrowth}, then implies Theorem \ref{thm_main1}.

\section{Rapidly divergent sequences}

In two of our proofs above we need a bound on a quadratic combination of the terms of a rapidly divergent sequence. Many variations on the bound we need are known to hold. These bounds are for instance responsible for the fact that as $n\to\infty$, a random homomorphism $F_2\to S_n$ becomes transitive and the fact that, as $n\to\infty$, a random cubic graph on $n$ vertices becomes connected.

Even if the bound we present certainly isn't new, we are not aware of a statement of it in the literature. For instance, \cite[Theorem A.1.(ii)]{CMZ} comes close but does not apply to our case. The bound in \cite[Proposition 1]{Muller}, or rather its proof, does contain what we need, but is phrased in a somewhat different language. Given this, we will provide a proof.

\begin{lemma}\label{lem_rapidgrowth}
Let $a_n \in \mathbb{R}$ for all $n\in\mathbb{N}$ be such that there exist $C>0
$, $\alpha>0$ and $\beta_1,\ldots,\beta_m \in \mathbb{R}$ and $\gamma_1,\ldots,\gamma_m \in (0,1]$ such that
\[
a_n \sim C\cdot \exp\left(\alpha n \log(n) + \sum_{i=1}^m \beta_i n^{\gamma_i}\right) \quad \text{as } n\to \infty,
\]
then
\[
\sum_{k=1}^{n-1} \frac{a_k\; a_{n-k}}{a_n} = O\left(n^{-\alpha}\right)\quad \text{as } n\to \infty.
\]
\end{lemma}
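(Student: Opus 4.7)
My plan is to peel $(n!)^\alpha$ off of $a_n$ using Stirling and reduce the problem to a weighted sum of reciprocal binomial coefficients. Since $(n!)^\alpha = (2\pi n)^{\alpha/2}(n/e)^{\alpha n}(1+O(1/n))$, the hypothesis gives $a_n = (n!)^\alpha g(n)(1+o(1))$ with
\[
g(n):=C'(2\pi n)^{-\alpha/2}\exp\!\Bigl(\alpha n+\sum_{i=1}^m\beta_i n^{\gamma_i}\Bigr),
\]
so that
\[
\sum_{k=1}^{n-1}\frac{a_k a_{n-k}}{a_n}=(1+o(1))\sum_{k=1}^{n-1}\binom{n}{k}^{-\alpha}\cdot\frac{g(k)g(n-k)}{g(n)}.
\]
The crucial observation is that the linear-in-$n$ contributions cancel in the ratio $g(k)g(n-k)/g(n)$: both the $\alpha n$ from Stirling and every $\beta_i n^{\gamma_i}$ with $\gamma_i=1$ split as $\alpha k+\alpha(n-k)-\alpha n=0$, leaving
\[
\frac{g(k)g(n-k)}{g(n)}=(2\pi)^{-\alpha/2}\Bigl(\tfrac{n}{k(n-k)}\Bigr)^{\alpha/2}\exp\bigl(T(k,n)\bigr),\quad T(k,n):=\sum_{i:\gamma_i<1}\beta_i\bigl(k^{\gamma_i}+(n-k)^{\gamma_i}-n^{\gamma_i}\bigr).
\]

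By the symmetry $k\leftrightarrow n-k$ it suffices to bound the sum over $1\le k\le n/2$. In this range the key inequality is
\[
0\;\le\; k^{\gamma_i}+(n-k)^{\gamma_i}-n^{\gamma_i}\;\le\; k^{\gamma_i}\qquad(\gamma_i\in(0,1)),
\]
the left bound from subadditivity of $x\mapsto x^{\gamma_i}$ (valid for $\gamma_i\le 1$) and the right from $(n-k)^{\gamma_i}\le n^{\gamma_i}$. Setting $\gamma^*:=\max\{\gamma_i:\gamma_i<1\}$, which is strictly less than $1$ (and $T\equiv 0$ if no such $\gamma_i$ exists), this yields $|T(k,n)|\le Bk^{\gamma^*}$.

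I then split $[1,n/2]$ into three pieces. The boundary $k=1$ contributes exactly $O(n^{-\alpha})$. For $k\in[2,M]$ with a fixed large constant $M$, using $\binom{n}{k}^{-\alpha}\le (2k/n)^{\alpha k}$ (from $\binom{n}{k}\ge (n/(2k))^k$) each term is $O(n^{-\alpha k})=O(n^{-2\alpha})$, so this block contributes $o(n^{-\alpha})$. For $k\in[M,n/2]$, I further split at $k=n/4$: in $[M,n/4]$ the log of the summand is bounded by $\alpha k\log(2k/n)+Bk^{\gamma^*}+O(\log n)$, and writing $k=n^\theta$ the main term $-\alpha(1-\theta)n^\theta\log n$ dominates $Bn^{\theta\gamma^*}$ because $\gamma^*<1$ forces $n^\theta\log n\gg n^{\theta\gamma^*}$; in $[n/4,n/2]$, Stirling for binomials shows $\binom{n}{k}^{-\alpha}$ decays exponentially in $n$, which easily overpowers $\exp(Bn^{\gamma^*})$ for $\gamma^*<1$. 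In both sub-ranges each term is super-polynomially small, so the block contributes $o(n^{-\alpha})$, and the total is $O(n^{-\alpha})$.

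The main technical obstacle is this third range: one must check that the sub-polynomially growing factor $\exp(Bk^{\gamma^*})$ is systematically dominated by the super-polynomial decay of $\binom{n}{k}^{-\alpha}$, and the strict inequality $\gamma^*<1$ is precisely what makes this work. The required bookkeeping is of the same flavor as in M\"uller's arguments in \cite{Muller_FreeProducts,Muller}.
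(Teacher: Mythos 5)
Your decomposition is genuinely different from the paper's. The paper works directly with the ratios $c_{n,k+1}/c_{n,k}$ of consecutive terms of $c_{n,k}:=a_ka_{n-k}/a_n$, uses the elementary mean-value bounds on $\log$ and on $x^\gamma$ to show the ratio is $\le 1$ on a central range $[L,n/2-L]$, and then bounds the sum by $n\cdot c_{n,M}+2\sum_{k<M}c_{n,k}$ using $c_{n,k}=O(n^{-\alpha k})$ for fixed $k$. You instead peel off $(n!)^\alpha$ by Stirling and rewrite the summand as $\binom{n}{k}^{-\alpha}\,g(k)g(n-k)/g(n)$, observe the cancellation of the linear-in-$n$ pieces, and then estimate each term directly by splitting $[1,n/2]$ into $\{1\}$, $[2,M]$, $[M,n/4]$, $[n/4,n/2]$. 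Both routes hinge on the same mechanism --- the sum is concentrated at the boundary $k\asymp 1$ --- but your version makes the ``reciprocal binomial'' structure and the role of $\gamma^*<1$ much more visible, which is a nice gain in transparency even if the bookkeeping is somewhat longer.

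There is one spot where the stated reasoning does not support the conclusion, though the conclusion itself is salvageable. In the range $k\in[M,n/4]$ you assert that ``each term is super-polynomially small.'' That is false near the left end: for $k=M$ fixed, $\theta=\log M/\log n$, and your bound gives $\log[\text{term}]\approx-\alpha M\log n+BM^{\gamma^*}+O(\log n)=-(\alpha M+O(1))\log n$, so the term is $\asymp n^{-\alpha M}$ --- polynomially small with a large exponent, but not super-polynomially small. What actually saves the block is that the \emph{uniform} bound $\le C\,n^{-\alpha M}$ on all terms in $[M,n/4]$ (the minimum of $k\mapsto\alpha k\log(n/(2k))-Bk^{\gamma^*}$ on this interval is attained at an endpoint, by concavity, and both endpoints give at least $(\alpha M-o(1))\log n$), together with the trivial count of at most $n$ terms, yields $O(n^{1-\alpha M})$; this is $o(n^{-\alpha})$ once $M$ is fixed with $\alpha M>1+\alpha$. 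You should also note, for $k$ in this range, that the prefactor $(n/(k(n-k)))^{\alpha/2}$ is $O(1)$ (not merely $O(n^{\alpha/2})$), since $k(n-k)\ge Mn/2$. With these two corrections --- replace ``super-polynomially small'' by ``$\le n^{-\alpha M+O(1)}$ uniformly, with $M$ large'' and insert the $O(1)$ bound on the prefactor --- the argument is complete and correct. One further small point: the ``$(1+o(1))$'' you factor out when replacing $a_j$ by $(j!)^\alpha g(j)$ is not $o(1)$ uniformly when $j$ is bounded; you should say instead that the hypothesis gives a ratio bounded between two positive constants for all $j$, which is all you need.
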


\begin{proof}
Our goal is to show that the sum is dominated by its outer terms, that is, when either $k$ or $n-k$ is small. For simplicity of exposition, we will assume the coefficients $\beta_i$ are positive (as they are in our application), the general case follows from a similar argument.

We will assume that $k\leq n/2$ in what follows and deal with the other half of the sum by symmetry. Write $c_{n,k} = a_k a_{n-k}/a_n$. 

We have
\begin{multline*}
\frac{c_{n,k+1}}{c_{n,k}} = (1+o_{n,k}(1)) \cdot \exp\Bigg[ \alpha \Big( (k+1) \log(k+1) - k \log(k) \\
+ (n-k-1) \log(n-k-1) - (n-k) \log(n-k) \Big) \\
 + \sum_{i=1}^m \beta_i \cdot \Big( (k+1)^{\gamma_i} - k^{\gamma_i} 
  + (n-k-1)^{\gamma_i} - (n-k)^{\gamma_i} \Big) \Bigg].
\end{multline*}
Using the fact that for $x>0$ and $\gamma \in (0,1]$
\[
\gamma \cdot (x+1)^{\gamma-1} \leq (x+1)^\gamma - x^\gamma = \int_x^{x+1} \gamma \cdot y^{\gamma-1} dy \leq \gamma\cdot x^{\gamma-1},
\]
and that for $x\geq 1$
\[
\log(x) + 1 \leq (x+1)\log(x+1) - x \log(x) = \int_x^{x+1}(\log(y) + 1)dy \leq \log(x+1) + 1,
\]
we obtain that
\begin{multline*}
\frac{c_{n,k+1}}{c_{n,k}} \leq (1+o_{n,k}(1)) \cdot \exp\Bigg[ \alpha \Big( \log(k+1) - \log(n-k-1) \Big) \\
 + \sum_{i=1}^m \beta_i \cdot \gamma_i \cdot \Big( k^{\gamma_i-1} - (n-k)^{\gamma_i-1} \Big) \Bigg].
\end{multline*}
We observe that the second term in the exponential is uniformly bounded, from which we conclude that there exists a uniform $L \in \mathbb{N}$ such that 
\[
\frac{c_{n,k+1}}{c_{n,k}}  \leq 1
\] 
for all $L \leq k \leq n/2-L$, whenever $n$ is large enough. From a similar computation we obtain that 
\[
c_{n,k} \leq c_{n,\lfloor n/2-L \rfloor}
\]
for all $n/2-L < k \leq n/2$. In other words, the sum above is dominated by its first and last $L$ terms.

One now computes that for $k \in \mathbb{N}$ fixed,
\[
c_{n,k} = O\left( n^{-\alpha k} \right) \quad \text{as } n\to\infty.
\]
This means that if we set $M = \max\{L,\lceil 2/\alpha \rceil\}$, then
\[
\sum_{k=1}^{n-1} \frac{a_k\; a_{n-k}}{a_n} \leq c_{n,M}\cdot n + 2\cdot \sum_{k=1}^{M- 1} c_{n,k} = O\left(n^{-\alpha}\right) \quad \text{as } n\to\infty,
\]
thus proving the lemma.
\end{proof}


\bibliographystyle{alpha}
\bibliography{bib}

\end{document}